\newtheorem{theorem}{Theorem}[section]
\newtheorem{lemma}[theorem]{Lemma}
\newtheorem{proposition}[theorem]{Proposition}
\newtheorem{corollary}[theorem]{Corollary}
\newtheorem*{thma}{Theorem A}
\newtheorem*{thmb}{Theorem B}
\newtheorem*{thmc}{Theorem C}
\newtheorem*{thmd}{Theorem D}
\theoremstyle{definition}
\newtheorem{definition}[theorem]{Definition}
\newtheorem{example}[theorem]{Example}
\theoremstyle{remark}
\newtheorem{remark}[theorem]{Remark}
\numberwithin{equation}{section}
\newcommand{\qKT}{\ensuremath{\mathrm{qKT}}}
\newcommand{\QqKT}{\ensuremath{\mathrm{QqKT}}}
\newcommand{\HqKT}{\ensuremath{\mathrm{HqKT}}}
\newcommand{\LRS}{\ensuremath{\mathrm{LRS}}}
\newcommand{\HLRS}{\ensuremath{\mathrm{HLRS}}}
\newcommand{\KeyTab}{\ensuremath{\mathrm{KeyTab}}}
\newcommand{\SSF}{\ensuremath{\mathrm{SSF}}}
\newcommand{\ASSF}{\atom\ensuremath{\mathrm{SSF}}}
\newcommand{\MSSF}{\Mono\ensuremath{\mathrm{SSF}}}
\newcommand{\FSSF}{\Fund\ensuremath{\mathrm{SSF}}}
\newcommand{\LSSF}{\Part\ensuremath{\mathrm{SSF}}}
\newcommand{\HSSF}{\ensuremath{\mathrm{HSSF}}}
\newcommand{\SSAF}{\ensuremath{\mathrm{SSAF}}}
\newcommand{\QSSF}{\ensuremath{\mathrm{QSSF}}}
\newcommand{\revSSYT}{\ensuremath{\mathrm{RT}}}
\newcommand{\Pairs}{\ensuremath{\mathrm{Pairs}}}
\newcommand{\HPairs}{\ensuremath{\mathrm{HPairs}}}
\newcommand{\HrevSSYT}{\ensuremath{\mathrm{HRT}}}
\newcommand{\QrevSSYT}{\ensuremath{\mathrm{QRT}}}
\newcommand{\wt}{\ensuremath{\mathrm{wt}}}
\newcommand{\destand}{\ensuremath{\mathrm{dst}}}
\newcommand{\sch}{\ensuremath{\mathfrak{S}}}
\newcommand{\key}{\ensuremath{\kappa}}
\newcommand{\qkey}{\ensuremath{\mathfrak{Q}}}
\newcommand{\Fund}{\ensuremath{\mathfrak{F}}}
\newcommand{\Mono}{\ensuremath{\mathfrak{M}}}
\newcommand{\Part}{\ensuremath{\mathfrak{L}}}
\newcommand{\atom}{\ensuremath{\mathcal{A}}}
\newcommand{\flatten}{\ensuremath{\mathrm{flat}}}
\newcommand{\sort}{\ensuremath{\mathrm{sort}}}
\newcommand{\swap}{\ensuremath{\mathrm{swap}}}
\newcommand{\Shuffle}{\ensuremath{\mathrm{Shuffle}}}
\newcommand{\lswap}{\ensuremath{\mathrm{lswap}}}
\newcommand{\Qlswap}{\ensuremath{\mathrm{Qlswap}}}
\newcommand{\sgeq}{\ensuremath{\unrhd}}
\newcommand{\excise}[1]{}
\newcommand{\FS}{\ensuremath{\mathrm{FixSlide}}}
\newcommand{\Slide}{\ensuremath{\mathrm{Slide}}}
\newlength\cellsize \setlength\cellsize{12\unitlength}
\newcommand\cellify[1]{\def\thearg{#1}\def\nothing{}%
\ifx\thearg\nothing\vrule width0pt height\cellsize depth0pt%
  \else\hbox to 0pt{\usebox2\hss}\fi%
  \vbox to 12\unitlength{\vss\hbox to 12\unitlength{\hss$#1$\hss}\vss}}
\newcommand\tableau[1]{\vtop{\let\\=\cr
\setlength\baselineskip{-12000pt}
\setlength\lineskiplimit{12000pt}
\setlength\lineskip{0pt}
\halign{&\cellify{##}\cr#1\crcr}}}
\begin{document}


\title[Polynomial bases]{Polynomial bases: positivity and Schur multiplication}  

\author[D. Searles]{Dominic Searles}
\address{Department of Mathematics and Statistics, University of Otago, Dunedin 9016, New Zealand}
\email{dominic.searles@otago.ac.nz}



\subjclass[2010]{Primary 05E05; Secondary 05E10}

\date{July 30, 2018}

\keywords{Schur polynomials, Demazure atoms, quasi-key polynomials, slide polynomials}

\begin{abstract}
We establish a poset structure on combinatorial bases of multivariate polynomials defined by positive expansions, and study properties common to bases in this poset. Included are the well-studied bases of Schubert polynomials, Demazure characters and Demazure atoms; the quasi-key, fundamental and monomial slide bases introduced in 2017 by Assaf and the author; and a new basis we introduce completing this poset structure. We show the product of a Schur polynomial and an element of a basis in this poset expands positively in that basis; in particular, we give the first Littlewood-Richardson rule for the product of a Schur polynomial and a quasi-key polynomial. This rule simultaneously extends Haglund, Luoto, Mason and van Willigenburg's (2011) Littlewood-Richardson rule for quasi-Schur polynomials and refines their Littlewood-Richardson rule for Demazure characters. We also establish bijections connecting combinatorial models for these polynomials including semi-skyline fillings and quasi-key tableaux.
\end{abstract}

\maketitle

\tableofcontents

%
\section{Introduction}\label{sec:intro}

%

The ring of multivariate polynomials has certain bases, indexed by weak compositions $a$, which enjoy important applications to geometry and representation theory. The \emph{Schubert polynomials} $\{\sch_w\}$ introduced by Lascoux and Sch\"utzenberger \cite{LS82} are a principal example. These polynomials, typically indexed by permutations $w$ (which are in bijection with weak compositions), represent Schubert classes in the cohomology of the complete flag variety.  Another example is the basis of \emph{Demazure characters} $\{\key_a\}$, also known as \emph{key polynomials}, introduced by Demazure \cite{Dem74} and studied combinatorially in type A by Lascoux and Sch\"utzenberger \cite{LS90} and Reiner and Shimozono \cite{RS95}. These polynomials are characters of Demazure modules, and Ion \cite{Ion} proved they occur as specializations of the nonsymmetric Macdonald polynomials introduced in \cite{Opdam}, \cite{Macdonald} and \cite{Cherednik}.

The \emph{Demazure atoms} $\{\atom_a\}$, introduced by Lascoux and Sch\"utzenberger \cite{LS90} and originally called \emph{standard bases}, form another basis of the polynomial ring. Schubert polynomials and Demazure characters expand positively in Demazure atoms \cite{LS90}, and Mason  \cite{Mason} showed Demazure atoms are also realized as specializations of nonsymmetric Macdonald polynomials. The Demazure atom basis has been widely studied from a combinatorial perspective \cite{HHL08} \cite{Mason08}, \cite{Mason}, \cite{HLMvW11b}, \cite{Ale16}, \cite{Pun16}, \cite{Mon16}. Demazure atoms were used by Haglund, Luoto, Mason and van Willigenburg \cite{HLMvW11a} to introduce the \emph{quasi-Schur} basis of quasisymmetric polynomials, which has applications to e.g., representation theory of Hecke algebras \cite{TvW15}.

Assaf and the author \cite{AS1}, \cite{AS2} introduced three bases of the polynomial ring, with the goal of better understanding Schubert polynomials and Demazure characters. The \emph{fundamental slide} basis $\{\Fund_a\}$ of polynomials lifts the fundamental basis \cite{Ges84} of quasisymmetric polynomials, in the sense that every fundamental quasisymmetric polynomial is also a fundamental slide polynomial, and the stable limit of a fundamental slide polynomial is a fundamental quasisymmetric function. A fundamental slide polynomial may also be realized as the weighted sum over the \emph{compatible sequences} of \cite{BJS93} for a reduced word for an element of the symmetric group. Similarly the \emph{monomial slide} basis $\{\Mono_a\}$ lifts the monomial basis, and the \emph{quasi-key} basis $\{\qkey_a\}$ lifts the quasi-Schur basis \cite{HLMvW11a} of quasisymmetric polynomials. Schubert polynomials and Demazure characters also expand positively in all three of these bases.

The fundamental slide basis has found application in several different areas. Examples include a lifting of the multi-fundamental quasisymmetric functions of Lam and Pylyavskyy \cite{LP07} resulting in a new formula \cite{PS17} for Grothendieck polynomials (which represent Schubert classes in the $K$-theory ring of the complete flag variety); a generalization of dual equivalence \cite{Assaf-weak}; a new proof of Kohnert's rule for Schubert polynomials \cite{Assaf-models}; and a proof that the $t=0$ specialization of a nonsymmetric Macdonald polynomial expands positively in Demazure characters \cite{Assaf-nonsymmetric}. 

In this paper we establish a poset $\mathcal{P}$ explaining when one basis expands positively in one another, give formulas for these expansions, extract properties common to bases in this poset, and provide bijections between combinatorial models used to define these bases. The Hasse diagram of $\mathcal{P}$ is shown in Figure~\ref{fig:expand}, where for $\mathcal{B}, \mathcal{B'}\in \mathcal{P}$, we have $\mathcal{B}>\mathcal{B'}$ if there is a directed path from $\mathcal{B}$ to $\mathcal{B'}$. Along the way, we are led to introduce a new basis $\{\Part_a\}$ which, in keeping with the atom and fundamental slide terminology, we call the \emph{fundamental particle} basis.

\begin{figure}[ht]
\[
\begin{tikzcd}
\sch_w \arrow[rr] & & \key_a \arrow[rr] & & \qkey_a  \arrow[rr] \arrow[d] & & \Fund_a  \arrow[rr] \arrow[d] & & \Mono_a \arrow[d] &  \\
& & & & \atom_a  \arrow [rr] & & \Part_a \arrow[rr] & & x^a  &
\end{tikzcd}
\]
  \caption{\label{fig:expand}The positivity poset $\mathcal{P}$ on combinatorial bases of polynomials.}
\end{figure}

\begin{thma}\label{thm:positivity}
Given the poset $\mathcal{P}$ on polynomial bases whose Hasse diagram is shown in Figure~\ref{fig:expand}, for $\mathcal{B}$ a basis in $\mathcal{P}$, all $f\in \mathcal{B}$ expand positively in $\mathcal{B}'\in \mathcal{P}$ if and only if $\mathcal{B} > \mathcal{B}'$ in $\mathcal{P}$. 
\end{thma}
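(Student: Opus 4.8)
The plan is to exploit the fact that positive expansions compose: if every element of a basis $\mathcal{B}$ is a nonnegative combination of elements of $\mathcal{B}'$, and every element of $\mathcal{B}'$ is a nonnegative combination of elements of $\mathcal{B}''$, then substituting shows every element of $\mathcal{B}$ is a nonnegative combination of elements of $\mathcal{B}''$. Thus ``expands positively in'' is a reflexive, transitive relation on the bases in $\mathcal{P}$. Since the order $>$ is by definition the transitive closure of the covering relations (the arrows of Figure~\ref{fig:expand}), the forward implication reduces to checking a single covering step, while the reverse implication can be attacked through a small set of counterexamples that ``pull back'' along the poset. (The reflexive case $\mathcal{B}=\mathcal{B}'$ is trivial, so I take $\mathcal{B}\neq\mathcal{B}'$ throughout.)

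For the ``if'' direction, suppose $\mathcal{B}>\mathcal{B}'$ and fix a directed path $\mathcal{B}=\mathcal{B}_0\to\mathcal{B}_1\to\cdots\to\mathcal{B}_k=\mathcal{B}'$ of covering relations. By the composition principle it suffices to know that each single arrow is positive, which I would invoke one arrow at a time: $\sch_w$ into $\key_a$ is classical (Lascoux--Sch\"utzenberger, Reiner--Shimozono); the arrows $\key_a\to\qkey_a\to\Fund_a\to\Mono_a$, together with $\qkey_a\to\atom_a$ and $\Mono_a\to x^a$, are the positive expansions of \cite{AS1,AS2}; and the three arrows incident to the new basis, namely $\Fund_a\to\Part_a$, $\atom_a\to\Part_a$ and $\Part_a\to x^a$, are the positivity statements proved earlier in the paper when $\Part_a$ is introduced. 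Composing along the chosen path yields the desired nonnegative expansion of each $f\in\mathcal{B}$ in $\mathcal{B}'$.

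For the ``only if'' direction I would prove the contrapositive via the following pull-back lemma, immediate from the composition principle: if $\mathcal{A}\ge\mathcal{B}$ and $\mathcal{C}\le\mathcal{B}'$ in $\mathcal{P}$ and every element of $\mathcal{B}$ expands positively in $\mathcal{B}'$, then every element of $\mathcal{A}$ expands positively in $\mathcal{C}$. Consequently a single non-positive pair $(\mathcal{A},\mathcal{C})$ rules out every pair $(\mathcal{B},\mathcal{B}')$ with $\mathcal{A}\ge\mathcal{B}$ and $\mathcal{C}\le\mathcal{B}'$. A short check over the eight bases shows that all non-comparabilities $\mathcal{B}\not>\mathcal{B}'$ are covered in this way by just five ``extreme'' failures: $\key$ into $\sch$, $\qkey$ into $\key$, $\Fund$ into $\atom$, $\atom$ into $\Mono$, and $\Mono$ into $\Part$. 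For instance $(\key,\sch)$ handles the entire column above $\sch$; the pair $(\Fund,\atom)$ handles both the incomparability of $\Fund$ with $\atom$ and the failure $\Mono\not>\atom$; and $(\atom,\Mono)$ handles $\atom\not>\Fund$, $\Part\not>\Mono$, and so on. It therefore suffices to exhibit, for each of these five, one basis element with a negative coefficient: a key polynomial that is not Schubert-positive, a quasi-key polynomial that is not key-positive, a fundamental slide polynomial that is not atom-positive, a Demazure atom that is not monomial-slide-positive, and a monomial slide polynomial that is not fundamental-particle-positive. By the pull-back lemma these five computations force $\mathcal{B}$ to fail to expand positively in $\mathcal{B}'$ for every non-comparable pair, completing the contrapositive.

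The main obstacle is this last step: producing the five explicit counterexamples. The three coming from incomparable pairs ($\Fund$ versus $\atom$, $\atom$ versus $\Mono$, $\Mono$ versus $\Part$) cannot be deduced from any other failure and must be witnessed by genuine small examples in which a negative structure constant appears; locating minimal such weak compositions and verifying the sign is the concrete work, particularly for the examples involving the new basis $\Part_a$. The two reverse-edge examples are comparatively standard but still require a specific composition to be displayed. Everything else — the composition principle, the reduction of the ``if'' direction to single arrows, and the combinatorial verification that these five pairs cover all non-comparabilities of $\mathcal{P}$ — is routine once the edge positivities (classical, from \cite{AS1,AS2}, or proved earlier here) and the five examples are in hand.
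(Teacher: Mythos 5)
Your overall strategy matches the paper's: the ``if'' direction is obtained by composing the edge positivities of Figure~\ref{fig:expand} (the classical ones, those of \cite{AS1,AS2}, and the three proved here for the edges into $\atom_a$ and $\Part_a$), and the ``only if'' direction is reduced to a handful of explicit negative expansions. One small correction on attribution: the edge $\qkey_a\to\atom_a$ is \emph{not} in \cite{AS1,AS2}; it is Theorem~\ref{thm:atomexpansion} of this paper, just as $\Fund_a\to\Part_a$ and $\atom_a\to\Part_a$ are Proposition~\ref{prop:fundtoparticle} and Theorem~\ref{thm:atomtoparticle}. Where you genuinely diverge is in the organization of the ``only if'' direction. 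The paper only explicitly treats the three incomparable pairs, exhibiting six identities in Proposition~\ref{prop:nonpositive} (both directions for each pair) and leaving the upward failures $\mathcal{B}\not>\mathcal{B}'$ with $\mathcal{B}<\mathcal{B}'$ implicit. Your pull-back lemma is a clean way to make the whole ``only if'' direction explicit, and I have checked that your five extreme failures do cover all $31$ ordered pairs $(\mathcal{B},\mathcal{B}')$ with $\mathcal{B}\neq\mathcal{B}'$ and $\mathcal{B}\not>\mathcal{B}'$; in particular it shows that only three of the paper's six identities are logically needed, namely $\Fund_{13}=\atom_{13}-\atom_{22}$, $\atom_{01}=\Mono_{01}-\Mono_{10}$ and $\Mono_{02}=\Part_{02}+\Part_{20}-\Part_{11}$, the other three being consequences via pull-back. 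This is a modest but real improvement in completeness over what the paper writes down.

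The one genuine incompleteness is that you defer the five witnesses rather than produce them, and two of them ($\key$ not $\sch$-positive, $\qkey$ not $\key$-positive) appear nowhere in the paper either, so you cannot simply cite them. They do exist and are small: for the second, Theorem~\ref{thm:atomexpansion} together with \eqref{eqn:keytoatom} gives $\qkey_{13}=\atom_{13}=\key_{13}-\key_{31}$. Alternatively, for any covering edge $\mathcal{B}'\covers\mathcal{B}$ the reverse failure follows abstractly: the two transition matrices are nonnegative and mutually inverse, and two nonnegative matrices whose product is the identity are monomial matrices, which would force $\mathcal{B}=\mathcal{B}'$ up to scaling; this observation would let you dispense with the two reverse-edge witnesses entirely and rely only on the three incomparable-pair identities of Proposition~\ref{prop:nonpositive}. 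With the witnesses supplied (from that proposition or by direct computation), your argument is complete and correct.
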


Positivity of the expansion of Schubert polynomials into Demazure characters was proved by Lascoux and Sch\"utzenberger \cite{LS90} and Reiner and Shimozono \cite{RS95}. Positivity of the expansion of Demazure characters into Demazure atoms was proved in \cite{LS90} and also by Mason in \cite{Mason08}. Positivity of the other expansions in the top row of Figure~\ref{fig:expand} were proved by Assaf and the author \cite{AS1}, \cite{AS2}. In this paper, we establish the remaining positivity relationships in $\mathcal{P}$, in particular, we give positive combinatorial formulas for the expansion of quasi-key polynomials in Demazure atoms and for the expansions of both Demazure atoms and fundamental slide polynomials in the new basis of fundamental particles, and we complete the proof of Theorem A by showing that for each pair of bases incomparable in $\mathcal{P}$, neither expands positively in the other.

We continue by establishing properties common to bases in $\mathcal{P}$. A main result is that every basis $\mathcal{B}\in \mathcal{P}$ satisfies a positivity property for multiplication with Schur polynomials $s_\lambda$.

\begin{thmb}\label{thm:LR}
For any polynomial basis $\mathcal{B}$ in the poset $\mathcal{P}$, any weak composition $a$ of length $n$ and $f_a\in \mathcal{B}$ the polynomial indexed by $a$, the product 
\[f_a \cdot s_{\lambda}(x_1,\ldots x_n)\] 
expands positively in the basis $\mathcal{B}$.
\end{thmb}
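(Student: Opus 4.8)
The plan is to treat the eight bases of $\mathcal{P}$ together, exploiting the poset structure and one master combinatorial computation, rather than attacking each basis independently. The single unifying device is that multiplication by $s_\lambda(x_1,\ldots,x_n)$, being symmetric and in particular symmetric in each consecutive pair $x_i,x_{i+1}$, commutes with the Demazure operator $\pi_i$ and the atom operator $\theta_i=\pi_i-1$: one has $\pi_i(s_\lambda\cdot g)=s_\lambda\cdot\pi_i g$ and $\theta_i(s_\lambda\cdot g)=s_\lambda\cdot\theta_i g$. Since $\pi_i$ carries each Demazure character to a Demazure character, and the $\theta_i$ govern the Demazure atoms analogously, this commutation is what will let positivity established at the ``sorted/dominant'' base cases propagate to all weak compositions $a$.

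First I would dispose of the easy bases. For the monomial basis the claim is immediate: $s_\lambda(x_1,\ldots,x_n)$ is a nonnegative sum of monomials $x^b$ and $x^a\cdot x^b=x^{a+b}$, so $x^a\cdot s_\lambda$ is manifestly monomial-positive. For the slide bases I would pass to the stable (quasisymmetric) limit: $\Mono_a$ stabilizes to the monomial quasisymmetric function indexed by $\flatten(a)$ and $\Fund_a$ to the corresponding fundamental quasisymmetric function; both families multiply positively via the overlapping/quasi-shuffle product, and $s_\lambda$ is positive in each, so the products are positive at the quasisymmetric level. I would then lift this through the explicit product structure of the slide bases to obtain $\Mono$- and $\Fund$-positivity of the finite-variable products, and handle the new fundamental particle basis $\Part_a$ in the same spirit using its role as the particle refinement of $\Fund_a$. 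Finally, the Schubert case $s_\lambda\cdot\sch_w$ follows by realizing $s_\lambda(x_1,\ldots,x_n)$ itself as a Schubert polynomial (for a Grassmannian permutation) and invoking the known nonnegativity of Schubert structure constants.

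The heart of the proof is the Demazure atom and quasi-key cases, which I would prove by an explicit Littlewood--Richardson rule. Using the semi-skyline augmented filling and quasi-key tableau models $\SSAF$ and $\QKT$, and taking Haglund--Luoto--Mason--van Willigenburg's quasi-Schur rule as a template, I would define an insertion/rectification procedure that, given a filling contributing to $\atom_a$ and a semistandard Young tableau of shape $\lambda$, produces a filling contributing to some $\atom_c$, and show this is a weight-preserving bijection onto the appropriate union of $\SSAF$ sets. This yields the (finest) Demazure atom rule $s_\lambda\cdot\atom_a=\sum_c d^c_{a\lambda}\,\atom_c$ with $d^c_{a\lambda}\ge 0$. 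The quasi-key rule is then obtained by summing over the atom support of a quasi-key and proving that the resulting atoms regroup into quasi-keys, giving $s_\lambda\cdot\qkey_a=\sum_b c^b_{a\lambda}\,\qkey_b$ with $c^b_{a\lambda}\ge 0$, the announced quasi-key Littlewood--Richardson rule. The Demazure character case is recovered by the $\pi_i$-commutation, reducing to $s_\lambda\cdot x^\mu$ for dominant $\mu$, or equivalently by a further regrouping of the quasi-key coefficients; this last step simultaneously exhibits the new rule as a refinement of the known Littlewood--Richardson rule for Demazure characters and, in the stable limit, as an extension of the quasi-Schur rule.

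I expect the main obstacle to be the construction and verification of the atom/quasi-key insertion bijection. One must pin down the insertion so that it interacts correctly with the quasi-Yamanouchi and column-strict conditions that distinguish $\SSAF$ and $\QKT$ fillings from ordinary semistandard tableaux, and prove both that its image consists of genuine skyline fillings and that it is a bijection onto the correct index set. Controlling precisely which compositions $c$ (respectively $b$) arise, and with which multiplicities, so that the atom coefficients sum to quasi-key coefficients and these in turn sum to the Demazure character coefficients, is where the real work lies; granting the atom and quasi-key rules, the commutation lemma together with the poset makes the remaining bases essentially formal.
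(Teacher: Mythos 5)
Your reductions for the monomial, slide, and Schubert cases are essentially sound and close to the paper's (the paper gets slide-positivity directly from the positive structure constants of $\{\Fund_a\}$ and $\{\Mono_a\}$ proved in \cite{AS1} together with Schur-positivity in those bases, with no detour through the stable limit; note that positivity of the quasisymmetric product alone would not determine the signs of the finite-variable expansion, so the ``lift'' really must be the explicit slide product rule). The serious gap is the fundamental particle basis. You propose to handle $\Part_a$ ``in the same spirit'' as the slide bases, but neither ingredient is available there: the paper shows $\{\Part_a\}$ does \emph{not} have positive structure constants (Section 4.2; compare $\Mono_{02}=\Part_{02}+\Part_{20}-\Part_{11}$ in Proposition~\ref{prop:nonpositive}, which already shows $\Part$-expansions of products of $\Part$'s can be negative), and by Theorem D it has no stable limit at all, since $\lim_{m}\Part_{0^m\times a}=0$. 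So there is no quasisymmetric level to descend to and no formal positivity argument; the paper must prove a separate Littlewood--Richardson rule for $\Part_a\cdot s_\lambda(x_1,\ldots,x_n)$, built from a destandardization operator on pairs $(S,T)$ with $S\in\LSSF(a)$ and $T$ a reverse semistandard tableau. Your outline contains no substitute for this step.

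The second issue is that the regrouping of atoms into quasi-keys --- the genuinely new theorem --- is exactly what you defer as ``where the real work lies,'' without supplying the idea that makes it go. The paper's mechanism is: by Theorem~\ref{thm:atomexpansion} the atom support of $\qkey_b$ is the dominance interval $\{b': b'\ge b,\ \flatten(b')=\flatten(b)\}$, and the $\LRS$ indexing the atom expansion of $\qkey_a\cdot s_\lambda$ (obtained from the rule of \cite{HLMvW11b}, which the paper cites rather than reproves by insertion) are partitioned into orbits of the vertical row-translation operators $\swap_{i,i\pm1}$; these preserve the triple and contre-lattice conditions because they do not change the column word, and each orbit realizes precisely one such interval, with the lowest composition $b$ in dominance order giving the highest-weight representative. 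Also be aware that your proposed $\theta_i$-commutation cannot propagate atom-positivity from a dominant base case, since $\theta_i\atom_a=-\atom_a$ when $a_i<a_{i+1}$, so $\theta_i$ does not preserve atom-positive expressions; the commutation trick is only safe for the $\pi_i$/Demazure character side, and even there it leaves the nontrivial base case $s_\lambda(x_1,\ldots,x_n)\cdot x^{\mu}$ with $\mu$ dominant to be established.
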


For Schubert polynomials, this statement is clear from the associated geometry: the structure constants are counting points in the intersection of three Schubert subvarieties in general position, two for the complete flag variety and one for a Grassmannian variety. A certain case of this statement is also proved combinatorially in \cite{KY04}. For fundamental and monomial slide polynomials (also ordinary monomials, trivially) this statement is clear from the fact these bases have positive structure constants and the fact that Schur polynomials expand positively in these bases, proved in \cite{AS1}. The remaining four bases do not have positive structure constants. For Demazure characters and Demazure atoms, Littlewood-Richardson rules for products with Schur polynomials are given by Haglund, Luoto, Mason and van Willigenburg in \cite{HLMvW11b}. In this paper, we will provide Littlewood-Richardson rules for products with Schur polynomials for the two remaining bases: quasi-key polynomials and fundamental particles.

The bases in Figure~\ref{fig:expand} may be described in terms of the combinatorics of \emph{semi-skyline fillings}. These were introduced in \cite{HHL08}, where they were referred to as fillings of column diagrams. The \emph{skyline diagram} $D(a)$ of a weak composition $a$ is the diagram with $a_i$ boxes in row $i$, left-justified. In our convention, which is upside-down from that of \cite{HLMvW11b} and rotated $90$ degrees counterclockwise from that of \cite{Mason08}, the lowest row is row 1. A semi-skyline filling is an assignment of positive integers to the boxes of $D(a)$, along with potentially an extra positive integer assigned to the left of each row called a \emph{basement entry}, satisfying:

\begin{itemize}
\item Entries weakly decrease along rows
\item Entries in any single column are distinct
\item All \emph{triples} of entries are \emph{inversion} triples (see Figure~\ref{fig:triple}).
\end{itemize}

Denote the set of semi-skyline fillings of $D(a)$ by $\SSF(a)$.

\begin{thmc}\label{thm:skyline}
For any polynomial basis $\mathcal{B}$ in the poset $\mathcal{P}$ and any weak composition $a$, the polynomial $f_a\in \mathcal{B}$ can be described in terms of semi-skyline fillings. In particular, if $\mathcal{B}\neq\{\sch_w\}$ then there is a subset $\mathcal{B}\SSF(a)$ of $\SSF(a)$ such that 
\[f_a = \sum_{S \in \mathcal{B}\SSF(a)}x^{\wt(S)}\]
where $\wt(S)$ is the weak composition whose $i$th part is the number of entries $i$ appearing in $S$.
\end{thmc}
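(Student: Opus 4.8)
The plan is to produce, for each basis $\mathcal{B}\neq\{\sch_w\}$ in $\mathcal{P}$ and each weak composition $a$, an explicit subset $\mathcal{B}SSF(a)\subseteq\SSF(a)$ realizing $f_a=\sum_{S\in\mathcal{B}SSF(a)}x^{\wt(S)}$, and to organize the construction around the poset $\mathcal{P}$ itself: I would anchor the argument at the bottom bases, where the description is essentially known or definitional, and then propagate it upward along the covering relations of $\mathcal{P}$. The first anchor is the monomial basis: the filling of $D(a)$ placing the entry $i$ throughout row $i$ is a valid semi-skyline filling of weight $a$, and once the basement convention is fixed it is the unique such filling, so its subset is a single element. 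The second, and more substantial, anchor is the Demazure atom basis, for which the desired identity is Mason's realization of $\atom_a$ as the generating function of semi-skyline (augmented) fillings of shape $D(a)$ \cite{Mason08}; I would restate this in the conventions of the present paper (lowest row is row $1$, the orientation rotated $90^\circ$ from \cite{Mason08}, and the paper's basement convention), which pins down $\atom SSF(a)$.

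For the fundamental particle basis $\Part_a$ I would argue directly from its definition. Since $\atom_a$ expands positively in the $\Part_b$, the particle sets are exactly the ``atomic pieces'' of the atom sets; I would identify $\Part SSF(a)$ as the subset of $\atom SSF(a)$ cut out by the condition defining $\Part_a$, so that the generating-function identity holds by construction. With the monomial, atom, and particle descriptions in hand, every remaining basis of $\mathcal{P}$ is reached by a directed path down to one of these anchors.

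The propagation step uses Theorem~A. Along each covering relation of $\mathcal{P}$ the relevant expansion is positive, and I would first verify that each such expansion is in fact multiplicity-free, so that its structure constants lie in $\{0,1\}$. Granting this, whenever $\mathcal{B}$ covers $\mathcal{B}'$ and $f^{\mathcal{B}}_a=\sum_b f^{\mathcal{B}'}_b$ over the appropriate index set, I would define $\mathcal{B}SSF(a):=\bigsqcup_b \mathcal{B}'SSF(b)$. This yields the descriptions for $\key_a$ and $\qkey_a$ by assembling atom sets, for $\Fund_a$ by assembling particle sets, and for $\Mono_a$ by assembling the singleton monomial sets; in each case the weights match termwise because the underlying expansion does.

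The main obstacle is compatibility of the pieces with the single ambient set $\SSF(a)$. A priori the constituents $\mathcal{B}'SSF(b)$ live on the distinct diagrams $D(b)$, whereas I need $\bigsqcup_b\mathcal{B}'SSF(b)$ to be simultaneously an honest disjoint union and a subset of the fillings of the one fixed diagram $D(a)$. I expect to resolve this with a basement-normalization lemma: fixing a common basement determined by $a$ reinterprets all the fillings indexed by the $b$ occurring in a given expansion as distinct fillings of $D(a)$, with the basement entries recording the index $b$ and thereby keeping the pieces disjoint. Checking that the three semi-skyline conditions---weak decrease along rows, distinctness within columns, and the inversion-triple condition---transport correctly under this normalization, and that no two pieces collide, is the delicate technical heart of the argument; the rest is bookkeeping along the Hasse diagram of $\mathcal{P}$.
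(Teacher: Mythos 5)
Your overall architecture---anchor at Mason's semi-skyline description of $\atom_a$, at the particle fillings, and at the single monomial filling, then assemble the summands of a multiplicity-free positive expansion into a single set of fillings---is the same strategy the paper uses for $\qkey_a$, $\Fund_a$ and $\Mono_a$. But the step you yourself flag as the ``delicate technical heart'' is where the gap lies, and the mechanism you propose for it does not work. A basement is extra data attached to the rows of a \emph{fixed} diagram; it cannot convert a filling of $D(b)$ into a filling of $D(a)$ when $b\neq a$, because the two diagrams have different numbers of boxes in each row. What actually reconciles the pieces in the cases $\qkey$, $\Fund$, $\Mono$, where every summand index $b$ satisfies $b\geq a$ and $\flatten(b)=\flatten(a)$, is \emph{row relocation}: each nonempty row of $D(b)$ is slid down to the corresponding nonempty row of $D(a)$, the first-column entries (not basement entries) record the original row indices and hence recover $b$, and one checks that translating occupied rows while preserving their relative order does not disturb the weakly decreasing, distinct-column, or triple conditions (this is exactly the content of the paper's Lemma~\ref{lem:rowtranslate} and the bijection in Lemma~\ref{lem:columnqkey}). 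Your lemma needs to be restated in these terms to be provable.

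More seriously, the propagation fails outright for Demazure characters. In the expansion $\key_a=\sum_{b\in\lswap(a)}\atom_b$ the indices $b$ are rearrangements of $a$ whose flattenings generally differ from $\flatten(a)$ (e.g.\ $a=(0,1,0,3)$ has $b=(0,3,0,1)$ as a summand), so the diagrams $D(b)$ assign the row lengths to rows in a different order than $D(a)$ does; no relocation of rows, and certainly no choice of basement, realizes $\ASSF(b)$ as a set of fillings of the single diagram $D(a)$. The paper does not obtain the skyline description of $\key_a$ by assembling atom sets at all: it invokes the independent theorem of Haglund--Luoto--Mason--van Willigenburg that $\key_a$ is the generating function of semi-skyline augmented fillings of $D(a)$ itself with the decreasing basement $n-i+1$. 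That result (or something equally substantive) is needed and is not supplied by your normalization. Two smaller points: your blanket claim that every covering-relation expansion is multiplicity-free is false (the fundamental slide expansion of a quasi-key polynomial, for instance, can have multiplicities), though the particular routes you actually use---into atoms, particles, and monomials---are multiplicity-free; and the Schubert case, which the theorem handles via the Demazure character expansion, is not addressed.
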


A Schubert polynomial $\sch_w$ cannot in general be described in terms of semi-skyline fillings for the \emph{single} weak composition associated to $w$. 
However, since Schubert polynomials expand positively in Demazure characters, $\sch_w$ can be described in terms of semi-skyline fillings for the weak compositions involved in this expansion.

Demazure characters (in $n$ variables) are described in this manner by imposing the basement whose $i$th entry is $n-i+1$ and extending the weakly decreasing and triple conditions to include basement entries (see \cite{HLMvW11b}). For the other three bases in the top row, rather than using a basement we impose the condition that entries in the first column are at most their row index, and decrease from top to bottom. With no further conditions, such semi-skyline fillings define quasi-key polynomials. For fundamental slide polynomials, we impose the additional condition that if a box $B$ is in a higher row that a box $B'$, then the entry in box $B$ is strictly larger than the entry in box $B'$. For monomial slide polynomials, we impose the additional condition that all boxes in the same row have the same entry.
For the bases in the bottom row of $\mathcal{P}$, the associated conditions on semi-skyline fillings are obtained from those of the basis above (in Figure~\ref{fig:expand}) by replacing ``entries in the first column decrease from top to bottom'' with ``entries in the first column are equal to their row index''.

The skyline filling description for Demazure atoms is given by Mason \cite{Mason08}. Instead of stating that entries in the first column are equal to their row index, this rule for Demazure atoms uses the basement whose $i$th entry is $i$ and extends the triple conditions to the basement, which has the same effect. The skyline descriptions for fundamental and monomial slide polynomials and fundamental particles are new here, though they follow straightforwardly from the definitions of these polynomials. The skyline description for quasi-key polynomials is also new here, but is not clear from the definition and will follow from theorems proved in Section 3.

For a basis $\mathcal{B}\in \mathcal{P}$, one can also ask about the \emph{stable limit} $\lim_{m\to \infty} f_{0^m\times a}$ of a polynomial $f_a\in \mathcal{B}$, where $0^m\times a$ is the weak composition obtained by prepending $m$ zeros to $a$.

\begin{thmd}\label{thm:stable}
Stable limits exist for all the bases in the top row of $\mathcal{P}$, and for none of the bases in the bottom row.
\end{thmd}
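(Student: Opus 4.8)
The plan is to read off the dichotomy directly from the semi-skyline descriptions supplied by Theorem C. Prepending $m$ zeros to a weak composition $a$ replaces $D(a)$ by $D(0^m\times a)$, which is $D(a)$ with every nonempty row shifted up by $m$ (the $m$ new bottom rows being empty and, in the basement-free models, inert). The key observation is that among the defining conditions on semi-skyline fillings, only the first-column condition refers to \emph{absolute} row position: for the top-row bases $\qkey_a,\Fund_a,\Mono_a$ it reads ``first-column entries are at most their row index and decrease from top to bottom'', an upper bound that will relax under the shift, whereas for the bottom-row bases $\atom_a,\Part_a,x^a$ it reads ``first-column entries equal their row index'', a rigid constraint forcing entries to escape to infinity. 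Every other condition (weakly decreasing rows, distinct columns, inversion triples, the decreasing first column, and the basis-specific conditions for $\Fund$ and $\Mono$) depends only on the relative configuration of the filled cells, hence is invariant under the shift.

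For $\mathcal{B}\in\{\qkey,\Fund,\Mono\}$ I would fix a monomial $x^b$ supported in $\{1,\dots,N\}$ and count fillings $S\in\mathcal{B}SSF(0^m\times a)$ with $\wt(S)=b$. Such fillings use only entries $\le N$; writing $i_1$ for the lowest nonempty row of $a$, once $m\ge N-i_1$ every first-column entry lies in a row of index $\ge N$, so the bound ``entry $\le$ row index'' is automatic. The surviving conditions being shift-invariant, the set of contributing fillings, and hence the coefficient of $x^b$, is eventually constant in $m$. Thus the stable limit exists; it is the monomial, fundamental, and quasi-Schur quasisymmetric function indexed by $\flatten(a)$ respectively (the lifting property of \cite{AS1,AS2}), which is nonzero.

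The two remaining top-row bases have no uniform basement-free description: $\key_a$ is cut out by the $n$-dependent basement $(n,n-1,\dots)$ and $\sch_w$ by no single diagram. Here I would instead invoke Theorem A: since $\sch_w>\key_a>\qkey_a$ in $\mathcal{P}$, both $\sch_w$ and $\key_a$ expand positively and finitely into quasi-key polynomials. The plan is to show this expansion commutes with prepending zeros, i.e. $\key_{0^m\times a}=\sum_b c_b\,\qkey_{0^m\times b}$ (and similarly for $\sch$) with the same nonnegative coefficients $c_b$ for every $m$; finiteness of the sum together with the previous paragraph then forces convergence to a nonzero symmetric function (the Schur function $s_{\sort(a)}$, resp.\ the Stanley symmetric function of $w$). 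I expect this commutation to be the main obstacle, as it requires the explicit expansion rules underlying Theorem A to be invariant under simultaneously shifting the index and all summands; I would verify this from those rules, the point being that each governing statistic is relative. (Alternatively one may simply cite the classical identification of these limits with Schur and Stanley symmetric functions.)

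Finally, for the bottom row I would show that no nonzero limit can exist. By Theorem C every $S\in\mathcal{B}SSF(0^m\times a)$ has first-column entry equal to its row index, so the leftmost cell of the lowest nonempty row (row $i_1+m$) carries the entry $i_1+m$; hence every monomial of $f_{0^m\times a}$ is divisible by $x_{i_1+m}$. Consequently, for any fixed $x^b$ supported in $\{1,\dots,N\}$ and any $m>N-i_1$ the coefficient of $x^b$ in $f_{0^m\times a}$ vanishes, so $f_{0^m\times a}\to 0$ coefficientwise while each $f_{0^m\times a}\neq 0$. Since a stable limit is by convention a nonzero power series, necessarily homogeneous of degree $|a|$, none exists for $\atom_a,\Part_a,x^a$ (for $x^a$ this is immediate, as $x^{0^m\times a}$ involves only variables of index exceeding $m$). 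This establishes the asserted dichotomy.
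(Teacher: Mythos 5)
Your proposal is correct. On the bottom row it is exactly the paper's argument: the first-column condition forces $g_a$ to be divisible by $x_i$ whenever $a_i>0$, so the coefficient of any fixed monomial in $g_{0^m\times a}$ vanishes for large $m$ and the coefficientwise limit is $0$. On the top row the paper does strictly less than you do: it simply cites the known stabilizations ($\sch_w$ to Stanley symmetric functions, $\key_a$ to Schur functions, $\qkey_a$ to quasi-Schur functions, $\Fund_a$ and $\Mono_a$ to fundamental and monomial quasisymmetric functions), whereas you derive stabilization of $\qkey$, $\Fund$, $\Mono$ directly from Theorem C by observing that the only condition referring to absolute row position is the upper bound on first-column entries, which becomes vacuous after the shift --- a clean, self-contained argument that also identifies the limits. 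For $\key$ and $\sch$ your primary route (showing the quasi-key expansion commutes with prepending zeros, e.g.\ that $\Qlswap(0^m\times a)=\{0^m\times b: b\in\Qlswap(a)\}$) is the one genuinely unverified step, and you rightly flag it; since your stated fallback is to cite the classical identifications, which is precisely what the paper does, there is no gap. The trade-off is that the paper's proof is a two-line citation plus the divisibility remark, while yours replaces most of the citations with arguments internal to the paper's skyline machinery.
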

Schubert polynomials stabilize to Stanley symmetric functions \cite{Mac91}. Demazure characters stabilize to Schur functions (implicit in work of Lascoux and Sch\"utzenberger \cite{LS90}, made explicit in \cite{AS2}). Quasi-key polynomials stabilize \cite{AS2} to the \emph{quasi-Schur} functions of \cite{HLMvW11a}, while fundamental and monomial slide polynomials stabilize \cite{AS1} to the fundamental and monomial quasisymmetric functions of \cite{Ges84}.

On the other hand, it is immediate from the rules for the skyline fillings that the bases $\mathcal{B}$ in the bottom row all have the property that if $a_i>0$ then $g_a \in \mathcal{B}$ is divisible by $x_i$. It follows that for any weak composition $a$ and any number $n$, 
\[g_{0^m \times a}(x_1,\ldots , x_n, 0,0, \ldots , 0) = 0\]
for all sufficiently large $m$, and therefore $\lim_{m\to \infty} g_{0^m\times a} = 0$.

In \cite{AS2}, quasi-key polynomials are defined combinatorially in terms of \emph{quasi-Kohnert} (or \emph{quasi-key}) tableaux, based on an algorithm of Kohnert \cite{Koh91} whereas in \cite{Mason08}, Demazure atoms are defined combinatorially in terms of semi-skyline fillings. These two tableau models are both defined on the skyline diagram of a weak composition, but have quite different rules for the fillings allowed. Our proof that quasi-key polynomials can also be realized as generating functions of semi-skyline fillings allows us to set up a dictionary between these tableau models. In particular, we give a bijection between semi-skyline fillings and quasi-key tableaux with fixed first column, passing through reverse tableaux. As a consequence, we show that quasi-key tableaux and semi-skyline fillings are constructed by decomposing reverse tableaux into sets of runs in an essentially dual manner: the former selecting increasing runs right-to-left, and the latter selecting decreasing runs left-to-right. We prove moreover that this bijection preserves the underlying skyline diagram. We use the combinatorics of the fundamental particle basis to restrict this bijection to distinguished subsets of quasi-key tableaux and semi-skyline fillings, and restrict this yet further using the combinatorics of fundamental slide polynomials.

\subsection{Organization}

In Section 2, we review Demazure atoms, fundamental and monomial slide polynomials and quasi-key polynomials, and provide semi-skyline filling descriptions for fundamental and monomial slide polynomials. In Section 3 we state and prove an explicit positive formula for the Demazure atom expansion of a quasi-key polynomial, and state and prove a positive combinatorial Littlewood-Richardson rule for the quasi-key expansion of the product of a quasi-key polynomial and a Schur polynomial. In Section 4 we introduce the new basis of fundamental particles, give a semi-skyline description, and state and prove positive combinatorial formulas for the expansion of both a Demazure atom and a fundamental slide polynomial into this basis. We then prove a positive combinatorial Littlewood-Richardson rule for the fundamental particle expansion of the product of a fundamental particle and a Schur polynomial. In Section 5 we establish bijections between semi-skyline fillings and quasi-key tableaux.

\section{Definitions and preliminaries}\label{sec:definitions}
Here we review bases in $\mathcal{P}$ and some of their associated combinatorial models, and explain how these bases can be understood in terms of semi-skyline fillings. We delay introducing the new fundamental particle basis $\{\Part_a\}$ until Section~\ref{sec:particle}.
\subsection{Demazure atoms, Demazure characters and semi-skyline fillings}

A \emph{triple} of a skyline diagram is a collection of three boxes with two adjacent in a row and either (Type A) the third box is above the right box and the lower row is weakly longer, or (Type B) the third box is below the left box and the higher row is strictly longer. 

\begin{figure}[ht]
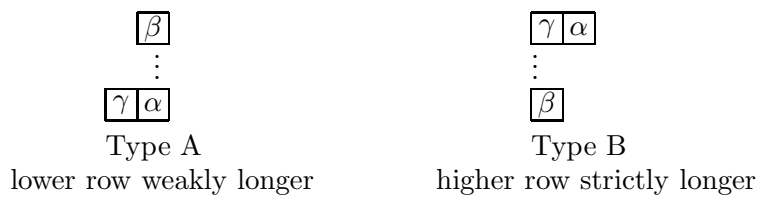

  \begin{displaymath}
    \begin{array}{l}
        \tableau{ & \beta } \\[-0.5\cellsize]  \hspace{1.5\cellsize} \vdots  \hspace{0.4\cellsize} \\ \tableau{  \gamma & \alpha } \\  \mbox{Type A} \\ \hspace{-3\cellsize} \mbox{lower row weakly longer}
    \end{array}
    \hspace{3\cellsize}
    \begin{array}{l}
   \hspace{3\cellsize}   \tableau{ \gamma & \alpha } \\[-0.5\cellsize] \hspace{3\cellsize}  \vdots \hspace{1.5\cellsize} \\ \hspace{3\cellsize} \tableau{ \beta & } \\ \hspace{3\cellsize}\mbox{Type B} \\  \mbox{higher row strictly longer}
    \end{array}
  \end{displaymath}
  \caption{\label{fig:triple}Triples for skyline diagrams.}
\end{figure}

Given a filling of the skyline diagram, a triple (of either type) is called an \emph{inversion triple} if either $\beta>\gamma \ge \alpha$ or $\gamma \ge \alpha > \beta$, and a \emph{coinversion triple} if $\gamma \ge \beta \ge \alpha$. 

Given a weak composition $a$, a \emph{semi-skyline filling} (or $\SSF$) of the skyline diagram $D(a)$ is a filling that weakly decreases along rows, has no repeated entries in any column, and all triples are inversion triples. Let $\SSF(a)$ denote the set of all semi-skyline fillings for $D(a)$.

The definition of a semi-skyline filling sometimes includes the notion of a \emph{basement}: an extra number assigned to each row, to the left of column 1, to which the weakly decreasing row, distinct column and triple conditions are extended. When a basement is involved, a semi-skyline filling is called a \emph{semi-skyline augmented filling} or $\SSAF$ \cite{Mason08}. Given a weak composition $a$,  let $\SSAF(a)$ denote the set of all semi-skyline augmented fillings of $D(a)$. Then the \emph{Demazure atom} $\atom_a$ is the polynomial given by the following combinatorial formula due to Mason:

\begin{theorem}[\cite{Mason08}]\label{thm:atomSSAF}
\[\atom_a = \sum_{S\in \SSAF(a)}x^{\wt(S)},\]
where the basement entry for row $i$ is $i$.
\end{theorem}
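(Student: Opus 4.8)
The plan is to prove the identity by induction, showing that the generating function $A_a := \sum_{S\in\SSAF(a)} x^{\wt(S)}$ (with basement entry $i$ in row $i$) satisfies the same recursion that defines the Demazure atoms $\atom_a$. Recall that, writing $\pi_i f = (x_i f - x_{i+1}\, s_i f)/(x_i-x_{i+1})$ for the $i$th isobaric divided difference and $\bar\pi_i = \pi_i - \mathrm{id}$, the atoms are characterized by $\atom_\lambda = x^\lambda$ for $\lambda$ a partition (weakly decreasing weak composition) together with the rule $\atom_{s_i a} = \bar\pi_i\,\atom_a$ whenever $a_i > a_{i+1}$. Since applying a sequence of such steps reaches every rearrangement of $\lambda$ from $\lambda$ itself, it suffices to check that $A_a$ obeys the same base case and the same single step $A_{s_i a} = \bar\pi_i A_a$; the two families then coincide by induction on the number of inversions of $a$.

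For the base case, when $a=\lambda$ is weakly decreasing the columns of $D(\lambda)$ are contiguous from row $1$ upward, so in each column the distinctness condition together with the constraint that a row-$r$ entry is at most $r$ (forced by weak decrease from the basement entry $r$) pins every box $(i,j)$ to the value $i$. Hence there is a unique filling, of weight $x^\lambda$, and $A_\lambda = x^\lambda = \atom_\lambda$.

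The crux is the inductive step: for $a_i > a_{i+1}$ one must show $\pi_i A_a = A_a + A_{s_i a}$. I would prove this by exhibiting, for each fixed placement of the entries different from $i$ and $i+1$, a ``ladder'' or string structure on the ways of completing the filling with entries $i$ and $i+1$: a raising/lowering move that converts one box from value $i+1$ to value $i$ (or vice versa) while preserving the semi-skyline augmented filling conditions. On each such string the operator $\pi_i$ acts exactly as the one-variable Demazure operator, summing the monomials from the bottom of the string up to its $s_i$-reflection, and collecting these contributions across all strings and all fillings reproduces $A_a + A_{s_i a}$. The main obstacle is precisely the verification that this local move respects the three semi-skyline filling conditions globally; the delicate point is the triple condition, since changing an entry between $i$ and $i+1$ can create a coinversion triple with boxes elsewhere in the diagram, and one must show that the admissible moves are organized into strings of exactly the right length so that $\pi_i$ produces $A_a + A_{s_i a}$ and no spurious terms.

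An alternative route, closer to Mason's original argument, is to invoke the Haglund--Haiman--Loehr combinatorial formula for the nonsymmetric Macdonald polynomials $E_a(x;q,t)$ together with the identification $\atom_a = E_a(x;q,t)|_{q=t=0}$. At $q=t=0$ the $t$-statistic forces every triple to be an inversion triple and the $q$-statistic kills the remaining fillings, so the only surviving terms of the Macdonald formula are the semi-skyline augmented fillings, each contributing $x^{\wt(S)}$; here the obstacle is the bookkeeping needed to confirm that the surviving fillings are exactly the elements of $\SSAF(a)$ and that both statistics vanish on them.
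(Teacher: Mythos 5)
The paper does not prove this statement at all: it is quoted verbatim from \cite{Mason08} (the result is attributed in the theorem header), so there is no internal proof to compare against, and your proposal has to be measured against Mason's argument in the cited source.

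Your two ``alternative'' routes are in fact the two halves of that argument, and neither is complete on its own. Route 2 (specializing the Haglund--Haiman--Loehr formula at $q=t=0$) only shows that $\sum_{S\in\SSAF(a)}x^{\wt(S)} = E_a(x;0,0)$; it does not by itself identify $E_a(x;0,0)$ with $\atom_a$, because the atoms are \emph{defined} by the Lascoux--Sch\"utzenberger recursion $\atom_\lambda = x^\lambda$, $\atom_{s_ia}=\bar\pi_i\atom_a$, and the fact that the $q=t=0$ Macdonald specialization satisfies that recursion is precisely the content of Mason's theorem, not a prior known fact you may invoke. Route 1 is the missing half, and there your base case is fine (column-distinctness plus the bound ``entry in row $r$ is at most $r$'' forces box $(r,j)$ to contain $r$ when $a$ is a partition), but the entire difficulty sits in the inductive step, which you only assert: you posit a string structure on the $\{i,i+1\}$-entries such that $\pi_i$ acts as the one-variable Demazure operator on each string, and you yourself flag that changing a single box between $i$ and $i+1$ can destroy an inversion triple involving boxes in \emph{other} columns and rows. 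That is not a technicality to be checked later --- it is the theorem. Without an explicit description of the admissible local moves, a proof that they preserve all three $\SSAF$ conditions, and a proof that the resulting strings have exactly the lengths needed for $\pi_i A_a = A_a + A_{s_ia}$ with no spurious terms, the argument does not close. So the proposal is a correct high-level plan (and faithfully reconstructs the shape of Mason's proof), but the central lemma is a genuine gap as written.
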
 

The effect of this choice of basement is simply to force entries in the first column of each $\SSAF$ to be equal to their row index. Let $\ASSF(a)$ denote those $\SSF(a)$ whose first column entries are equal to their row index. Then, equivalently to Theorem~\ref{thm:atomSSAF},

\[\atom_a = \sum_{S\in \ASSF(a)}x^{\wt(S)}.\]

\begin{example}
\[\atom_{0103} = x^{0103} + x^{0112} +x^{0202} +x^{1102} +x^{0121} +x^{0211} +x^{1111},\]
which is computed by the elements of $\ASSF(0103)$ shown in Figure~\ref{fig:atom0103} below.
\end{example}

\begin{figure}[ht]
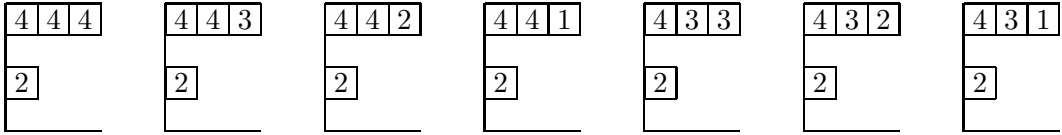

  \begin{center}
    \begin{displaymath}
      \begin{array}{c@{\hskip2\cellsize}c@{\hskip2\cellsize}c@{\hskip2\cellsize}c@{\hskip2\cellsize}c@{\hskip2\cellsize}c@{\hskip2\cellsize}c}
        \vline\tableau{  4 & 4 & 4 \\ \\  2 \\ \\\hline } &
        \vline\tableau{  4 & 4 & 3 \\ \\  2 \\ \\\hline } &
        \vline\tableau{  4 & 4 & 2 \\ \\  2 \\ \\\hline } &
        \vline\tableau{  4 & 4 & 1 \\ \\  2 \\ \\\hline } &
        \vline\tableau{  4 & 3 & 3 \\ \\  2 \\ \\\hline } &
        \vline\tableau{  4 & 3 & 2 \\ \\  2 \\ \\\hline } &
        \vline\tableau{  4 & 3 & 1 \\ \\  2 \\ \\\hline }
      \end{array}
    \end{displaymath}
    \caption{\label{fig:atom0103}The seven elements of $\ASSF(0103)$.}
  \end{center}
\end{figure}

In \cite{Dem74}, Demazure introduced the \emph{Demazure characters}, also known as \emph{key polynomials}. There are many different combinatorial formulas for computing the monomial expansion of a Demazure character \cite{RS95}. One formula, mentioned in the introduction, is a weighted sum of $\SSAF$ with a decreasing basement (\cite{Mason}). Another formula is a weighted sum of \emph{Kohnert tableaux} \cite{AS2}. Most useful for our purposes, however, will be the following expression of a Demazure character as a sum of Demazure atoms (see, e.g., \cite{Mason}, \cite{HLMvW11b}).

Given a weak composition $a$, let $\sort(a)$ be the rearrangement of the parts of $a$ into weakly decreasing order, and let $w$ be the minimal length permutation sending $a$ to $\sort(a)$. Then
\begin{equation}\label{eqn:keytoatom}
\key_a = \sum_{\stackrel{\sort(b)=\sort(a)}{v(b)\le w}} \atom_b
\end{equation}
where $v(b)$ is the minimal length permutation sending $b$ to $\sort(b)$ and $\le$ denotes the strong Bruhat order on permutations. 

\begin{example} Let $a=(0,1,0,3)$. Then $\sort(a)=(3,1,0,0)$, $w=3241$ and
\[\key_{0103} = \atom_{0103} + \atom_{1003} + \atom_{0130} + \atom_{1030} + \atom_{1300} + \atom_{0301} + \atom_{0310} + \atom_{3001} + \atom_{3010} + \atom_{3100}.\]
\end{example}

\subsection{Monomial and fundamental slide polynomials}

In \cite{AS1}, Assaf and the author define \emph{fundamental slide polynomials}. Given a weak composition $a$, let $\flatten(a)$ be the (strong) composition obtained by removing all $0$ terms from $a$. Given weak compositions $a,b$ of length $n$, we say $b$ \emph{dominates} $a$, denoted by $b \geq a$, if
\[
  b_1 + \cdots + b_i \geq a_1 + \cdots + a_i
\]
for all $i=1,\ldots,n$. Note this extends the usual dominance order on partitions.

\begin{definition}[\cite{AS1}]
  Given a weak composition $a$, the \emph{monomial slide polynomial} $\Mono_{a}$ is defined by
    \[\Mono_{a} = \sum_{\substack{b \geq a \\ \flatten(b) = \flatten(a)}} x^b.\]
\end{definition}

\begin{example}
\[  \Mono_{0103} = x^{0103} + x^{1003} + x^{0130} + x^{1030} + x^{1300}.\]
\end{example}

We can describe monomial slide polynomials in terms of semi-skyline fillings. Define a \emph{monomial semi-skyline filling} to be a semi-skyline filling whose entries in the first column are at most their row index and decrease from top to bottom, 
and all boxes in the same row have the same entry. Let $\MSSF(a)$ denote the set of all monomial semi-skyline fillings for $a$. Then the following is clear:

\begin{proposition}
\[\Mono_a = \sum_{S \in \MSSF(a)} x^{\wt(S)}.\]
\end{proposition}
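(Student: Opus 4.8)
The plan is to produce a weight-preserving bijection between $\MSSF(a)$ and the set $\{b : b \ge a,\ \flatten(b)=\flatten(a)\}$ indexing the monomials of $\Mono_a$, and then simply read off the equality of generating functions. First I would record the structure of a monomial semi-skyline filling. Since every box in a given row carries the same entry, such a filling is determined by one value $c_r$ per nonempty row $r$, and the weakly-decreasing-along-rows condition is then automatic. Writing the nonempty rows of $D(a)$ as $p_1 < \cdots < p_m$, so that $a_{p_1},\ldots,a_{p_m}$ are the nonzero parts of $a$ in order, the no-repeated-entry condition in each column forces the values $c_{p_j}$ to be pairwise distinct (already in column $1$), while the first-column conditions these fillings carry — entries bounded by their row index and strictly decreasing from top to bottom — say exactly that $c_{p_1} < c_{p_2} < \cdots < c_{p_m}$ with $c_{p_j} \le p_j$ for each $j$.

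The one remaining $\SSF$ requirement, that every triple be an inversion triple, I would dispatch as the routine step justifying the author's ``clear'': for a filling with constant rows a direct inspection of Type A and Type B triples shows that each involves exactly two rows and is an inversion triple precisely when those two rows carry distinct entries, which always holds here since the $c_{p_j}$ are distinct. Setting $q_j := c_{p_j}$, a filling in $\MSSF(a)$ is thus the same data as a strictly increasing sequence $q_1 < \cdots < q_m$ with $q_j \le p_j$, and its weight is $\prod_{j} x_{q_j}^{a_{p_j}}$, i.e. $x^b$ for the weak composition $b$ with $b_{q_j}=a_{p_j}$ and all other parts zero. By construction $\flatten(b) = (a_{p_1},\ldots,a_{p_m}) = \flatten(a)$, and conversely every $b$ with $\flatten(b)=\flatten(a)$ arises exactly once, its nonzero parts occupying the increasing positions $q_1 < \cdots < q_m$. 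Hence $S \mapsto \wt(S)$ is a bijection from $\MSSF(a)$ onto $\{b : \flatten(b)=\flatten(a),\ q_j \le p_j \ \forall j\}$.

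It remains to identify the constraint $q_j \le p_j$ for all $j$ with the dominance condition $b \ge a$, and this is the crux I expect to require the most care. Given $\flatten(b)=\flatten(a)$, I would compare prefix sums directly: for any $k$ one has $\sum_{i\le k} a_i = \sum_{j:\,p_j \le k} a_{p_j}$ and $\sum_{i\le k} b_i = \sum_{j:\,q_j \le k} a_{p_j}$, using $b_{q_j}=a_{p_j}$. Since the parts $a_{p_j}$ are strictly positive, the partial sums $\sum_{j\le r} a_{p_j}$ are strictly increasing in $r$, so $\sum_{i\le k} b_i \ge \sum_{i\le k} a_i$ for all $k$ is equivalent to $\#\{j : q_j \le k\} \ge \#\{j : p_j \le k\}$ for all $k$, which in turn (taking $k=p_j$ and using that both $(p_j)$ and $(q_j)$ are increasing) is equivalent to $q_j \le p_j$ for every $j$. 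Combining the two steps, $S \mapsto \wt(S)$ is a weight-preserving bijection $\MSSF(a) \to \{b : b \ge a,\ \flatten(b)=\flatten(a)\}$, whence $\sum_{S \in \MSSF(a)} x^{\wt(S)} = \sum_{b \ge a,\ \flatten(b)=\flatten(a)} x^b = \Mono_a$, as claimed.
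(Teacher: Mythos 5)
Your proof is correct: the paper offers no argument for this proposition (it is stated as ``clear''), and your bijection $S\mapsto \wt(S)$ --- one distinct value $q_j\le p_j$ per nonempty row, triples automatically inversions for constant rows, and the prefix-sum equivalence between ``$q_j\le p_j$ for all $j$'' and dominance --- is exactly the verification being elided. One point worth keeping explicit: you rely on first-column entries decreasing from top to bottom, a condition stated in the paper's introduction but dropped from the restatement of $\MSSF(a)$ in Section 2; it is genuinely needed (for $a=(0,1,1)$ the filling with row entries $c_2=2$, $c_3=1$ satisfies all the other conditions and would double-count $x_1x_2$), so your reading is the one under which the proposition holds.
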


\begin{definition}[\cite{AS1}]
  Given a weak composition $a$, the \emph{fundamental slide polynomial} $\Fund_{a}$ is defined by
  \[  \Fund_{a} = \sum_{\substack{b \geq a \\ \flatten(b) \ \mathrm{refines} \ \flatten(a)}} x^b.\]
\end{definition}

\begin{example}
\[  \Fund_{0103} = x^{0103} + x^{1003} + x^{0130} + x^{1030} + x^{1300} + x^{0112} + x^{1012} + x^{1102} + x^{1120} + x^{0121} + x^{1021} + x^{1201} + x^{1210} + x^{1111}.\]
\end{example}

We can also describe fundamental slide polynomials in terms of semi-skyline fillings. Define a \emph{fundamental semi-skyline filling} to be a semi-skyline filling whose entries in the first column are at most their row index, 
and if a box $B$ is in a higher row than a box $B'$, then the entry in box $B$ is strictly larger than the entry in box $B'$. Let $\FSSF(a)$ denote the set of all fundamental semi-skyline fillings for $a$. Then the following is clear:

\begin{proposition}\label{prop:fundamentalskyline}
\[\Fund_a = \sum_{S \in \FSSF(a)} x^{\wt(S)}.\]
\end{proposition}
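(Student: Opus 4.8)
The plan is to exhibit a weight-preserving bijection between the monomials indexing $\Fund_a$ and the fillings in $\FSSF(a)$. Since the defining sum for $\Fund_a$ lists each weak composition $b$ at most once, it suffices to prove that the weight map $S \mapsto \wt(S)$ is a bijection from $\FSSF(a)$ onto the index set $\{b : b \geq a \text{ and } \flatten(b) \text{ refines } \flatten(a)\}$. First I would record a structural simplification. The extra condition defining a fundamental semi-skyline filling forces every entry in a strictly higher row to exceed every entry in a lower row, and combined with the weakly decreasing row condition this makes both the distinct-column condition and the inversion-triple condition automatic: in any triple the box in the higher row is forced to be the strict maximum, so the inequalities $\beta > \gamma \ge \alpha$ (Type A) or $\gamma \ge \alpha > \beta$ (Type B) hold by inspection, and two cells sharing a column lie in different rows and hence carry distinct entries. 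Thus $\FSSF(a)$ consists exactly of fillings of $D(a)$ that weakly decrease along rows, have first-column entries bounded by the row index, and have every entry of a higher row strictly larger than every entry of a lower row.

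Next I would read the two index conditions off such a filling $S$. Because entries strictly increase from lower to higher rows, each integer value occurs in at most one row, and the nonempty rows, read from bottom to top, receive disjoint blocks of values that increase with the row. Reading the multiplicities of the values occurring in $S$ in increasing order therefore produces $\flatten(\wt(S))$ grouped into consecutive blocks whose sizes are exactly the lengths of the nonempty rows of $D(a)$, that is, the parts of $\flatten(a)$; this is precisely the assertion that $\flatten(\wt(S))$ refines $\flatten(a)$. The remaining point, which I expect to be the only genuine content of the proof, is to show that the first-column bound is equivalent to dominance $\wt(S) \geq a$. For this I would compare, for each $t$, the number $\sum_{i \le t} a_i$ of cells lying in rows $i \le t$ with the number $\sum_{i \le t} \wt(S)_i$ of cells carrying an entry at most $t$. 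If every first-column (hence maximal) entry is at most its row index, then every cell in a row $i \le t$ has entry at most $t$, so $\sum_{i \le t}\wt(S)_i \ge \sum_{i \le t} a_i$, giving dominance. Conversely, if some nonempty row $r$ has maximal entry exceeding $r$, then taking $t = r$ that row contributes at most its length minus one to the count of cells with entry $\le r$, which forces $\sum_{i \le r}\wt(S)_i < \sum_{i \le r} a_i$ and violates dominance.

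Finally I would establish surjectivity and injectivity simultaneously by reconstructing the filling from its weight. Given $b$ with $b \geq a$ and $\flatten(b)$ refining $\flatten(a)$, the refinement splits the support of $b$, in increasing order, into consecutive blocks of the prescribed row lengths; placing each block in weakly decreasing order in the corresponding nonempty row of $D(a)$ yields a filling that strictly increases across rows by construction. The structural simplification of the first paragraph then guarantees it is a genuine semi-skyline filling, and the equivalence of the second paragraph guarantees the first-column bound, so it lies in $\FSSF(a)$ and has weight $b$. As this reconstruction is forced at every step, it is the unique preimage, so $S \mapsto \wt(S)$ is a bijection and $\Fund_a = \sum_{S \in \FSSF(a)} x^{\wt(S)}$ follows.
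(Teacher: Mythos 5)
Your proof is correct, and it is the argument the paper has in mind: the paper simply asserts this proposition as ``clear'' from the definitions (and separately records, in Remark~\ref{rmk:fundtriple}, your observation that the higher-row condition makes the triple and distinct-column conditions redundant). Your write-up supplies the details --- the weight map being a bijection onto $\{b : b\ge a,\ \flatten(b)\ \mathrm{refines}\ \flatten(a)\}$, with the first-column bound equivalent to dominance --- and I find no gaps in it.
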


\begin{remark}\label{rmk:fundtriple}
Notice that the additional conditions given for $\MSSF(a)$ and $\FSSF(a)$ both force all triples to be inversion triples, so the triple conditions are in fact
redundant for these families of semi-skyline fillings.
\end{remark}

Say that $b$ \emph{strongly dominates} $a$, denoted by $b \sgeq a$, if $b \geq a$ and for all $c \geq a$ such that $\flatten(c)=\flatten(b)$, we have $c \geq b$ as well. The fundamental slide polynomials expand positively in the monomial slide polynomials:
\begin{proposition}[\cite{AS1}]
  Given a weak composition $a$ of length $n$, we have
 \[   \Fund_{a} = \sum_{\substack{b \sgeq a \\ \flatten(b) \ \mathrm{refines} \ \flatten(a)}} \Mono_{b}.\]
\end{proposition}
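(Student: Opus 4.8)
The plan is to prove the identity by showing that the monomials appearing on the two sides coincide, grouping the monomials of $\Fund_a$ according to which $\Mono_b$ they belong to. Recall from the definitions that $\Fund_a = \sum_{b} x^b$ where $b$ ranges over weak compositions with $b \geq a$ and $\flatten(b)$ refining $\flatten(a)$, while each $\Mono_b = \sum_{c} x^c$ where $c \geq b$ and $\flatten(c) = \flatten(b)$. So the right-hand side is a sum of monomials $x^c$ indexed by pairs $(b,c)$ with $b \sgeq a$, $\flatten(b)$ refining $\flatten(a)$, and $c \geq b$, $\flatten(c) = \flatten(b)$. I would establish a bijection between such pairs $(b,c)$ and the monomials $x^c$ of $\Fund_a$, i.e.\ weak compositions $c$ with $c \geq a$ and $\flatten(c)$ refining $\flatten(a)$.

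The key step is to show that each monomial $x^c$ of $\Fund_a$ arises from exactly one term $\Mono_b$ on the right-hand side; equivalently, that for each such $c$ there is a \emph{unique} $b$ with $b \sgeq a$, $\flatten(b)$ refining $\flatten(a)$, $c \geq b$, and $\flatten(c) = \flatten(b)$. The natural candidate is to take $b$ to be the unique strongly dominant weak composition with $\flatten(b) = \flatten(c)$ that is dominated by $c$ and dominates $a$. First I would verify that $\flatten(c)$, being a refinement of $\flatten(a)$ (which follows since $c$ is a monomial of $\Fund_a$), forces any valid $b$ to satisfy $\flatten(b) = \flatten(c)$ refining $\flatten(a)$, so the refinement condition on $b$ is automatic. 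The remaining content is that among all weak compositions $c'$ with $\flatten(c') = \flatten(c)$ and $c' \geq a$, there is a dominance-minimal one, and that this minimal element is exactly the strong dominant $b$ satisfying $b \sgeq a$; this is precisely the content of the definition of strong dominance given just above the statement.

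I expect the main obstacle to be the existence and uniqueness of the strongly dominant $b$: one must show that the set of weak compositions $c'$ with a fixed flattening $\flatten(c') = \flatten(c)$ and $c' \geq a$ has a unique dominance-minimal element, and that this minimal element strongly dominates $a$ in the technical sense defined. Existence of a minimum is not automatic for the dominance order in general, so the argument must exploit the rigidity coming from fixing the flattening: among compositions with the same nonzero parts in the same relative order, one can construct the componentwise-minimal arrangement of partial sums compatible with $c' \geq a$, and verify it is itself a weak composition with the correct flattening. Once this minimal $b$ is produced, checking $b \sgeq a$ is essentially a restatement of its minimality, and checking $c \geq b$ is immediate. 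The reverse direction—that every pair $(b,c)$ on the right yields a monomial $x^c$ of $\Fund_a$—is routine: $c \geq b \geq a$ gives $c \geq a$ by transitivity, and $\flatten(c) = \flatten(b)$ refining $\flatten(a)$ gives the refinement condition, so $x^c$ indeed appears in $\Fund_a$.
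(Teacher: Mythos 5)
The paper does not actually prove this proposition; it is quoted verbatim from \cite{AS1}, so there is no in-paper proof to compare against. Your argument is the natural one and is essentially correct in structure: grouping the monomials $x^c$ of $\Fund_a$ by the strong composition $\alpha=\flatten(c)$, and showing that each nonempty fiber $S_\alpha=\{c : c\ge a,\ \flatten(c)=\alpha\}$ is exactly the monomial support of $\Mono_b$ for the unique dominance-minimal $b\in S_\alpha$, which by definition is the unique $b\sgeq a$ with $\flatten(b)=\alpha$. The refinement condition on $b$ is indeed automatic, and the reverse containment is routine as you say. The one step that needs more care than your sketch gives it is the existence of the minimum: the ``componentwise-minimal arrangement of partial sums'' $P^*_i=\min\{\sigma_j : \sigma_j\ge a_1+\cdots+a_i\}$ (where $\sigma_j=\alpha_1+\cdots+\alpha_j$) is not automatically the partial-sum profile of a weak composition with flattening $\alpha$ --- it can jump by more than one block $\alpha_j$ at a single position --- and verifying that this cannot happen uses the nonemptiness of $S_\alpha$. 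A clean way to close the gap: for $c,c'\in S_\alpha$ the profile $D_i=\min(P_i(c),P_i(c'))$ again realizes an element of $S_\alpha$, because if $D_{i-1}=\sigma_j$ then $P_i$ of whichever of $c,c'$ achieves $\sigma_j$ at $i-1$ is at most $\sigma_{j+1}$ (its $i$th entry is either $0$ or $\alpha_{j+1}$), so $D$ jumps by at most one block; a finite nonempty set closed under this meet has a unique minimum, and minimality is exactly the condition $b\sgeq a$. With that detail supplied, your proof is complete and agrees with the argument in \cite{AS1}.
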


\begin{example}
 \[ \Fund_{0103} = \Mono_{0103} + \Mono_{0112} + \Mono_{0121} + \Mono_{1111}.\]
\end{example}

\subsection{Quasi-key polynomials and quasi-key tableaux}

\begin{definition}\label{def:qktableaux}
Given a weak composition $a$, a \emph{quasi-key tableau of shape $a$} is a filling of $D(a)$ with positive integers satisfying
\begin{enumerate}
\item entries weakly decrease along rows, and no entry of row $i$ exceeds $i$
\item entries in any column are distinct, and entries increase up the first column
\item if an entry $i$ is above an entry $k$ in the same column with $i<k$, then there is a label $j$ immediately right of $k$, with $i<j$
\item Given two rows with the higher row strictly longer, then if entry $i$ is in column $c$ of the lower row and entry $j$ in column $c+1$ of the higher row, then  $i<j$.
\end{enumerate}
Denote the set of quasi-key tableaux of shape $a$ by $\qKT(a)$; for example, see Figure~\ref{fig:qKey0302}. Moreover, let $\qKT^{(1)}(a)$ be all quasi-key tableaux of shape $a$ whose first column entries are equal to their row index.
\end{definition}

\begin{remark}\label{rmk:quasi-key}
For quasi-key tableaux, boxes stay fixed and entries vary. The quasi-key conditions are a direct translation of the conditions of \cite{AS2} defining \emph{quasi-Kohnert tableaux}, for which entries stay fixed and boxes move. Given a quasi-key tableau, the corresponding quasi-Kohnert tableau is obtained by moving each box downward to the row given by its label, and changing its label to the index of the row it originally came from. 
\end{remark}

\begin{remark}
Quasi-key tableaux do not necessarily obey the triple conditions, so they are not semi-skyline fillings as we have defined them. However, we will show in Section 3 that the polynomial generated by $\qKT^{(1)}(a)$ is in fact the Demazure atom $\atom_a$. This will permit a new definition of quasi-key polynomials in terms of semi-skyline fillings.
\end{remark}

\begin{figure}[ht]
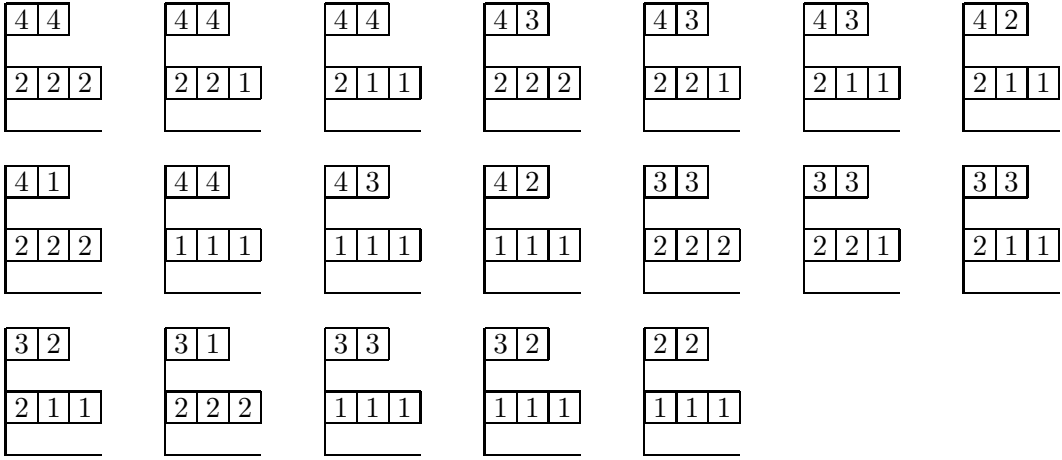

  \begin{center}
    \begin{displaymath}
      \begin{array}{c@{\hskip2\cellsize}c@{\hskip2\cellsize}c@{\hskip2\cellsize}c@{\hskip2\cellsize}c@{\hskip2\cellsize}c@{\hskip2\cellsize}c}
        \vline\tableau{  4 & 4  \\ \\  2 & 2 & 2 \\ \\\hline } &
        \vline\tableau{  4 & 4  \\ \\  2 & 2 & 1 \\ \\\hline } &
        \vline\tableau{  4 & 4  \\ \\  2 & 1 & 1 \\ \\\hline } &
        \vline\tableau{  4 & 3  \\ \\  2 & 2 & 2 \\ \\\hline } &
        \vline\tableau{  4 & 3  \\ \\  2 & 2 & 1 \\ \\\hline } &
        \vline\tableau{  4 & 3  \\ \\  2 & 1 & 1 \\ \\\hline } &
        \vline\tableau{  4 & 2  \\ \\  2 & 1 & 1 \\ \\\hline } \\ \\
        \vline\tableau{  4 & 1  \\ \\  2 & 2 & 2 \\ \\\hline } &
        \vline\tableau{  4 & 4  \\ \\  1 & 1 & 1 \\ \\\hline } &
        \vline\tableau{  4 & 3  \\ \\  1 & 1 & 1 \\ \\\hline } &
        \vline\tableau{  4 & 2  \\ \\  1 & 1 & 1 \\ \\\hline } &
        \vline\tableau{  3 & 3  \\ \\  2 & 2 & 2 \\ \\\hline } &
        \vline\tableau{  3 & 3  \\ \\  2 & 2 & 1 \\ \\\hline } &
        \vline\tableau{  3 & 3  \\ \\  2 & 1 & 1 \\ \\\hline } \\ \\
        \vline\tableau{  3 & 2  \\ \\  2 & 1 & 1 \\ \\\hline } &
        \vline\tableau{  3 & 1  \\ \\  2 & 2 & 2 \\ \\\hline } &
        \vline\tableau{  3 & 3  \\ \\  1 & 1 & 1 \\ \\\hline } &
        \vline\tableau{  3 & 2  \\ \\  1 & 1 & 1 \\ \\\hline } &
        \vline\tableau{  2 & 2  \\ \\  1 & 1 & 1 \\ \\\hline } 
      \end{array}
    \end{displaymath}
    \caption{\label{fig:qKey0302}The 19 elements of $\qKT(0302)$.}
  \end{center}
\end{figure}

\begin{definition}\label{def:qkey}
  Given a weak composition $a$, the \emph{quasi-key polynomial} $\qkey_a$ is defined by
  \begin{equation}\nonumber
    \qkey_a = \sum_{T \in \qKT(a)} x^{\wt(T)}.
  \end{equation}
  \label{def:quasi-key}
\end{definition}

\begin{example}
From Figure~\ref{fig:qKey0302} we compute

\begin{eqnarray}\nonumber
  \qkey_{0302} & = & x^{0302} + x^{1202} + x^{2102} + x^{0311} + x^{1211} + x^{2111} + x^{2201} + x^{1301} + x^{3002} + x^{3011}  \\ \nonumber
 & &  + x^{3101} + x^{0320} + x^{1220} + x^{2120} + x^{2210} + x^{1310} + x^{3020} + x^{3110} + x^{3200}. \nonumber
\end{eqnarray}
\end{example}

\begin{definition}\label{def:qkey1}
  Given a weak composition $a$, the \emph{column quasi-key polynomial} $\qkey^{(1)}_a$ is defined by
  \begin{equation}\nonumber
    \qkey^{(1)}_a = \sum_{T \in \qKT^{(1)}(a)} x^{\wt(T)}.
  \end{equation}
  \label{def:quasi-key1}
\end{definition}

\begin{example}
From the first eight tableaux in Figure~\ref{fig:qKey0302}, we compute
\begin{displaymath}
  \qkey^{(1)}_{0302}  =  x^{0302} + x^{1202} + x^{2102} + x^{0311} + x^{1211} + x^{2111} + x^{2201} + x^{1301}.
\end{displaymath}
\end{example}

\begin{lemma}\label{lem:columnqkey}
\[\qkey_a = \sum_{\substack{b \geq a \\ \flatten(b) = \flatten(a)}} \qkey^{(1)}_b.\]
\end{lemma}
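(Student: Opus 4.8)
The plan is to exhibit a weight-preserving bijection
\[
\qKT(a) \;\longleftrightarrow\; \bigsqcup_{\substack{b \geq a \\ \flatten(b) = \flatten(a)}} \qKT^{(1)}(b),
\]
from which the lemma follows immediately by summing the monomials $x^{\wt(\cdot)}$ over each side. The bijection is governed by the first-column entries. Given $T \in \qKT(a)$, let the nonempty rows of $D(a)$ be $i_1 < \cdots < i_k$ and let $c_j$ be the first-column entry of row $i_j$. By condition (1) the entries of each row weakly decrease, so every entry of row $i_j$ is at most $c_j \le i_j$; by condition (2) the $c_j$ strictly increase with $j$. I would define $b$ to be the weak composition whose nonempty rows are $c_1 < \cdots < c_k$, with row $c_j$ carrying the same length and (in the associated filling $T'$) the same entries as row $i_j$ of $T$. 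Since this moves whole rows upward by a fixed, order-preserving relabeling $i_j \mapsto c_j$, we get $\flatten(b) = \flatten(a)$ and $\wt(T') = \wt(T)$; moreover every prefix sum can only increase (each part moves to a weakly lower row index), so $b \geq a$, and the first-column entry of row $c_j$ of $T'$ is now $c_j$, so $T' \in \qKT^{(1)}(b)$ once the tableau conditions are verified.

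The crux of well-definedness is that the relabeling is \emph{order-preserving} and moves rows as units. First I would observe that, since entries weakly decrease across each row, every entry of the moved row $i_j$ is $\le c_j$, exactly the new row index; hence condition (1) holds for $T'$, with the first-column entry binding. For conditions (2)--(4) I would note that the multiset of entries in each column, the relative vertical order of any two boxes in a column, the horizontal adjacency ``immediately right'', and the (higher, strictly longer) relation between pairs of rows are all preserved verbatim by an order-preserving row relabeling; since conditions (2)--(4) are phrased purely in terms of these data, they transfer unchanged from $T$ to $T'$. Finally, condition (2)'s requirement that first-column entries strictly increase upward is immediate, since they are now $c_1 < \cdots < c_k$ in rows $c_1 < \cdots < c_k$.

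For the inverse I would first establish the key dominance characterization: for weak compositions with $\flatten(b) = \flatten(a)$, writing $p_1 < \cdots < p_k$ and $i_1 < \cdots < i_k$ for the nonempty rows of $b$ and $a$, one has $b \geq a$ if and only if $p_j \le i_j$ for all $j$. The backward direction is the prefix-sum computation above; for the forward direction, if $p_j > i_j$ then only $p_1,\ldots,p_{j-1}$ lie weakly below row $i_j$, so the prefix sum of $b$ at row $i_j$ omits the $j$th part and falls strictly below that of $a$, contradicting $b \geq a$. Granting this, the inverse sends $(b, T') \in \qKT^{(1)}(b)$ to the filling $T$ of $D(a)$ obtained by moving row $p_j$ of $T'$ down to row $i_j$; the inequality $p_j \le i_j$ makes this a downward, order-preserving move, so by the same preservation argument $T \in \qKT(a)$, its first-column entries are the $p_j \le i_j$ (recovering condition (1)), and the two maps are visibly mutually inverse.

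The main obstacle I anticipate is not a single calculation but pinning down this dominance characterization and confirming it matches tableau feasibility exactly: the content is that ``$b \geq a$ and $\flatten(b) = \flatten(a)$'' is equivalent to ``each nonempty row of $b$ sits weakly below the corresponding nonempty row of $a$'', which is precisely the room needed to slide rows so that first-column entries equal row indices. Once this equivalence and the order-preserving preservation of conditions (2)--(4) are in hand, the bijection, and hence the lemma, follows.
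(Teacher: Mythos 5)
Your proposal is correct and is essentially the paper's own argument: the paper proves this lemma in one sentence by exactly the bijection you describe, moving each row of boxes to the row indexed by its first-column entry. You have simply supplied the supporting details (the dominance characterization $p_j \le i_j$, and the preservation of the quasi-key conditions under order-preserving row relabeling) that the paper leaves implicit.
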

\begin{proof}
Moving each row of boxes to the row indexed by the first-column entry gives a weight-preserving bijection from $\qKT(a)$ to $\bigcup \qKT^{(1)}(b)$ (where $b \geq a$ and $\flatten(b) = \flatten(a)$).
\end{proof}

\begin{example}
\[\qkey_{0302} = \qkey^{(1)}_{0302}+\qkey^{(1)}_{3002}+\qkey^{(1)}_{0320}+\qkey^{(1)}_{3020}+\qkey^{(1)}_{3200}.\]
\end{example}

The quasi-key polynomials expand positively in the fundamental slide basis. The following are direct translations of \cite{AS2} from the language of quasi-Kohnert tableaux to the language of quasi-key tableaux (see Remark~\ref{rmk:quasi-key}).

\begin{definition}[\cite{AS2}]\label{def:quasi-Yamanouchi}
  A quasi-key tableau is \emph{quasi-Yamanouchi} if the leftmost occurrence of each entry $i$ is either
  \begin{enumerate}
  \item in row $i$, or
  \item weakly left of some entry $i+1$.
  \end{enumerate}
  Denote the set of quasi-Yamanouchi quasi-key tableaux of shape $a$ by $\QqKT(a)$.
  \label{def:quasi-Yam}
\end{definition}

\begin{theorem}[\cite{AS2}]
  For a weak composition $a$ of length $n$, we have
    \[\qkey_a = \sum_{T \in \QqKT(a)} \Fund_{\wt(T)}.\]
\end{theorem}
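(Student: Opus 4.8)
The plan is to prove the identity by partitioning $\qKT(a)$ into fibers that each assemble into a single fundamental slide polynomial. Concretely I would introduce a \emph{destandardization} map $\destand\colon \qKT(a)\to\QqKT(a)$ and establish, for every quasi-Yamanouchi tableau $T$, the fiber identity
\[
\sum_{\substack{S\in\qKT(a)\\ \destand(S)=T}} x^{\wt(S)} \;=\; \Fund_{\wt(T)}.
\]
Since the fibers of $\destand$ partition $\qKT(a)$ and $\qkey_a=\sum_{S\in\qKT(a)}x^{\wt(S)}$ by Definition~\ref{def:qkey}, summing this identity over $T\in\QqKT(a)$ gives the theorem. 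The substance is that the fiber over $T$ should realize exactly the monomial support $\{c : c\geq\wt(T),\ \flatten(c)\text{ refines }\flatten(\wt(T))\}$ of $\Fund_{\wt(T)}$.

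To define $\destand$, so long as $S$ is not quasi-Yamanouchi I would take the smallest value $i$ violating the quasi-Yamanouchi condition---so the leftmost $i$ lies neither in row $i$ nor weakly left of any $i+1$---and raise every copy of $i$ simultaneously to $i+1$, iterating until the tableau is quasi-Yamanouchi. (For instance, the tableau of shape $(0,3,0,2)$ with top row $4,3$ and bottom row $2,2,2$ raises its unique $3$ to a $4$.) The first task is to verify this outputs a genuine quasi-key tableau. Here the violation condition does the work: it forces every $i+1$ to lie strictly left of the leftmost $i$, so no $i+1$ shares a column with any $i$ and promoting the whole value causes no column repeat, while raising an entire maximal run of $i$'s at once preserves weak decrease along rows; one then checks conditions (3) and (4) of Definition~\ref{def:qktableaux} survive as well. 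Each raise strictly increases the sum of all entries, which is bounded, so the process terminates, necessarily at a quasi-Yamanouchi tableau. One must note that promoting the $i$'s can create a fresh violation at a smaller value (when the $i$'s that had justified an $i-1$ are themselves promoted, as happens for the shape-$(3,1,0,1)$ tableau above), but the ``smallest violation'' rule keeps $\destand$ deterministic and does not disturb termination.

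The crux, and the step I expect to be the main obstacle, is the fiber identity. Every raise moves the weight strictly downward in dominance (the pre-raise weight dominates the post-raise weight) and merges two adjacent parts of the flattening, so $\wt(S)\geq\wt(\destand(S))$ and $\flatten(\wt(S))$ refines $\flatten(\wt(\destand(S)))$; this gives the containment of fiber weights in the support of $\Fund_{\wt(T)}$, and one verifies that distinct tableaux in a single fiber carry distinct weights. The hard direction is the converse with uniqueness: given any $c\geq\wt(T)$ whose flattening refines $\flatten(\wt(T))$, I must produce the \emph{unique} $S\in\destand^{-1}(T)$ with $\wt(S)=c$. I would build $S$ by inverting the raising moves---splitting each constant value-block of $T$ into consecutive smaller values, with multiplicities dictated by the refinement of $\flatten(\wt(T))$ by $\flatten(c)$, and choosing the value-indices of the pieces as forced by the dominance $c\geq\wt(T)$---and then show the quasi-key conditions together with the quasi-Yamanouchi property of $T$ make this placement both legal and unique. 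Proving that the shape constraints defining $\qKT(a)$ correspond \emph{exactly} to the dominance-and-refinement constraints defining $\Fund_{\wt(T)}$, with neither over- nor under-counting, is the technical heart; I expect this to go through by induction on the number of distinct entries of $T$, peeling off the largest value-block together with its splittings.
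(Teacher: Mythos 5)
The paper itself gives no proof of this theorem—it is quoted from \cite{AS2} as a direct translation from quasi-Kohnert tableaux to quasi-key tableaux—but your strategy of a destandardization map onto quasi-Yamanouchi tableaux whose fibers realize exactly the monomial support of $\Fund_{\wt(T)}$ is precisely the argument of that source, and it is the same mechanism this paper deploys for the analogous expansions it does prove (Theorem~\ref{thm:atomtoparticle} and both Littlewood--Richardson rules, where $\destand$ on $\ASSF$ and the particle-highest condition play the roles of your $\destand$ and the quasi-Yamanouchi condition). The ``technical heart'' you defer is resolved there exactly as you sketch---given $T$ and a slide $b$ of $\wt(T)$, relabel the rightmost $b_{i+1}$ entries $j$ as $i+1$, the next $b_{i+2}$ as $i+2$, and so on, with uniqueness following from lack of choice at each step---so your approach is essentially the standard one, modulo actually carrying out that verification against the quasi-key tableau conditions.
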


\begin{example}
Only the first, seventh and eighth quasi-Kohnert tableaux in Figure~\ref{fig:qKey0302} are quasi-Yamanouchi. Therefore,
\[\qkey_{0302} = \Fund_{0302} + \Fund_{2201} + \Fund_{1301}.\]
\end{example}

Given a composition $a$, define a \emph{left swap} to be the exchange of two parts $a_i \le a_j$ where $i<j$. 

\begin{definition}[\cite{AS2}]\label{def:lswap}
Given a weak composition $a$, let $\lswap(a)$ be the set of weak compositions that can be obtained via a (possibly empty) sequence of left swaps starting with $a$.
\end{definition}

\begin{example}
\begin{eqnarray}\nonumber
\lswap(0,1,0,3) = \{(0,1,0,3), (0,1,3,0), (1,0,0,3), (1,0,3,0), (1,3,0,0), \\ \nonumber (0,3,0,1), (0,3,1,0), (3,0,0,1), (3,0,1,0), (3,1,0,0)\}.
\end{eqnarray}
\end{example}

Define $\Qlswap(a)$ to be all $b\in \lswap(a)$ such that if $c\in \lswap(a)$ and $\flatten(b) = \flatten(c)$, then $c \ge b$. 

\begin{example}
\[\Qlswap(0,1,0,3) = \{(0,1,0,3), (0,3,0,1)\}.\] 
\end{example}

Then the expansion of a Demazure character into the quasi-key basis is given by

\begin{theorem}[\cite{AS2}]\label{thm:keytoqkey}
\[\key_a = \sum_{b\in \Qlswap(a)} \qkey_b.\]
\end{theorem}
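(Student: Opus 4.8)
The plan is to compare the \emph{Demazure atom expansions} of the two sides; since the atoms form a basis, it suffices to show these agree. First I would rewrite the left side in atoms. By \eqref{eqn:keytoatom} we have $\key_a = \sum_{\sort(b)=\sort(a),\, v(b)\le w}\atom_b$, where $w$ is the minimal-length permutation sending $a$ to $\sort(a)$; note $w = v(a)$. The first step is a purely order-theoretic lemma identifying this index set with $\lswap(a)$: the rearrangements $b$ of $a$ biject with the minimal coset representatives of the Young subgroup stabilizing $\sort(a)$, under which $v(b)$ is the representative attached to $b$, and a single left swap (exchanging parts $a_i\le a_j$ with $i<j$) corresponds to a descending cover in Bruhat order on this parabolic quotient. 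Since Bruhat order is graded with covers realized by such swaps, the elements reachable from $a$ by left swaps are exactly those $b$ with $v(b)\le v(a)=w$. This yields $\key_a = \sum_{b\in\lswap(a)}\atom_b$.

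Next I would rewrite the right side in atoms. Expanding each $\qkey_b$ by Lemma~\ref{lem:columnqkey} gives $\qkey_b = \sum_{c\ge b,\,\flatten(c)=\flatten(b)}\qkey^{(1)}_c$, and applying the identification $\qkey^{(1)}_c = \atom_c$ (established in Section 3; equivalently, a direct weight-preserving bijection between $\qKT^{(1)}(c)$ and $\ASSF(c)$) converts column quasi-key polynomials to atoms. Thus the right side becomes $\sum_{b\in\Qlswap(a)}\sum_{c\ge b,\,\flatten(c)=\flatten(b)}\atom_c$, and the theorem reduces to the set identity
\[
\lswap(a) \;=\; \bigsqcup_{b\in\Qlswap(a)}\{\,c : c\ge b,\ \flatten(c)=\flatten(b)\,\}.
\]

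Proving this identity is the combinatorial heart of the argument. Within a fixed flattening class the weak compositions are exactly the interleavings of the fixed nonzero-part sequence with zeros, and these form a distributive lattice under dominance (ordered by the positions of the nonzero parts). The key observation is that moving a nonzero part leftward past a zero is itself a left swap, so $\lswap(a)$ meets each class in an \emph{up-set} (order filter). Disjointness of the union is then immediate, since distinct elements of $\Qlswap(a)$ have distinct flattenings by their defining minimality. What remains is to show each occupied class contributes exactly one term, i.e.\ that this order filter is \emph{principal}, with minimum the corresponding element of $\Qlswap(a)$.

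I expect this principality to be the main obstacle: an arbitrary up-set in a distributive lattice need not be principal, so one must exploit the extra structure of $\lswap(a)$ as a single Bruhat ideal. I would establish it by a confluence (meet-closure) argument, showing that the operation ``push the nonzero parts as far right as possible while remaining in $\lswap(a)$'' is well defined and single-valued; this exhibits every $c\in\lswap(a)$ as lying above a unique element of $\Qlswap(a)$ within its class, and identifies that element as the filter's minimum. Combining the three ingredients, both sides equal $\sum_{c\in\lswap(a)}\atom_c$, proving the theorem. Alternatively, the whole statement admits a self-contained monomial-level proof via a direct weight-preserving bijection $\bigsqcup_{b\in\Qlswap(a)}\qKT(b)\to\bigsqcup_{b\in\lswap(a)}\ASSF(b)$ of the kind built in Section 5, bypassing the atom bridge; but the atom-expansion route above is shorter given the tools already in hand.
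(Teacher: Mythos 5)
There is a genuine gap, and it is one of logical dependency rather than of combinatorial detail: your argument is circular relative to how this paper is organized. The paper does not prove Theorem~\ref{thm:keytoqkey} at all --- it imports it from \cite{AS2}, where it is established through the combinatorics of quasi-Kohnert tableaux and Kohnert's algorithm. Within this paper, Theorem~\ref{thm:keytoqkey} is then an \emph{input}: together with Lemma~\ref{lem:Qlswap} it is used to prove Theorem~\ref{thm:atomexpansion} (that $\qkey_a=\sum_{b\ge a,\ \flatten(b)=\flatten(a)}\atom_b$), which in turn, combined with Lemma~\ref{lem:columnqkey}, is what yields $\qkey^{(1)}_c=\atom_c$. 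Your proposal runs this chain backwards: you expand the right-hand side in atoms by invoking $\qkey^{(1)}_c=\atom_c$ ``as established in Section~3,'' but that identification is a consequence of the very theorem you are proving. As written, the proposal therefore does not constitute a proof. To rescue it you would need an independent, \emph{shape-preserving} proof that $\qKT^{(1)}(c)$ and $\ASSF(c)$ have the same weight generating function for each fixed $c$; the Section~5 bijection is only shown to be a column-set-preserving bijection between the unions $\qKT^{(1)}\leftrightarrow\ASSF$ over all shapes, and the paper never claims (nor needs) that it restricts to a bijection $\qKT^{(1)}(c)\to\ASSF(c)$ --- indeed the author's choice to prove $\atom_c=\qkey^{(1)}_c$ by a transition-matrix comparison rather than bijectively is a sign that this restriction is not free.

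The parts of your argument that do not depend on that identification are sound and closely track the paper's auxiliary lemmas: your Bruhat-order identification of $\{b:\sort(b)=\sort(a),\,v(b)\le w\}$ with $\lswap(a)$ is exactly Lemma~\ref{lem:lswap}, and your set identity
\[
\lswap(a)\;=\;\bigsqcup_{b\in\Qlswap(a)}\{\,c : c\ge b,\ \flatten(c)=\flatten(b)\,\}
\]
is the content of the observation opening the proof of Lemma~\ref{lem:Qlswap}. You are right, and more careful than the paper, to isolate the principality of the order filter $\lswap(a)\cap\{\flatten=\flatten(b)\}$ as the point needing justification; the paper passes over it with ``by definition.'' But these ingredients establish $\key_a=\sum_{b\in\Qlswap(a)}\overline{\qkey}_b$ with $\overline{\qkey}_b:=\sum_{c\ge b,\ \flatten(c)=\flatten(b)}\atom_c$, not the stated theorem about $\qkey_b$; bridging $\overline{\qkey}_b$ to $\qkey_b$ is precisely where an argument independent of Theorem~\ref{thm:keytoqkey} is required, and the paper obtains that bridge only because \cite{AS2} has already supplied the theorem.
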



\section{A Littlewood-Richardson rule for quasi-key polynomials}\label{sec:LR}


\subsection{Demazure atom expansion}

We give an explicit positive formula for the Demazure atom expansion of a quasi-key polynomial. Given a weak composition $a$, define a polynomial 
\[\overline\qkey_a = \sum_{\stackrel{b\ge a}{\flatten(b)=\flatten(a)}}\atom_b.\]

We will show the exact same formula as that of Theorem~\ref{thm:keytoqkey} gives the expansion of a Demazure character into $\{\overline\qkey_a\}$. Recall (\ref{eqn:keytoatom}) the expansion of a Demazure character into Demazure atoms
\[\key_a = \sum_{\stackrel{\sort(b)=\sort(a)}{v(b)\le w}} \atom_b.\]
This formula can be straightforwardly re-expressed in terms of $\lswap(a)$ (Definition~\ref{def:lswap}).

\begin{lemma}\label{lem:lswap}
The Demazure atom expansion of a Demazure character is given by
\[\key_a = \sum_{b\in \lswap(a)} \atom_b.\]
\end{lemma}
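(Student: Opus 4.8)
The plan is to show that the two index sets coincide: $\lswap(a) = \{b : \sort(b) = \sort(a),\ v(b) \le w\}$, where $w$ is the minimal-length permutation sending $a$ to $\sort(a)$ and $v(b)$ is the minimal-length permutation sending $b$ to $\sort(b)$. Since (\ref{eqn:keytoatom}) already gives the Demazure atom expansion as a sum over the latter set, establishing this set equality immediately yields the claim. First I would observe that any single left swap (exchanging parts $a_i \le a_j$ with $i<j$) preserves the multiset of parts, so every $b \in \lswap(a)$ satisfies $\sort(b) = \sort(a)$; this handles the ``$\sort$'' constraint on both sides at once, reducing the problem to matching the Bruhat-order condition $v(b) \le w$ with reachability by left swaps.

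The key step is to reinterpret left swaps in permutation-theoretic terms. A weak composition $b$ with $\sort(b) = \sort(a)$ is determined by a coset representative recording how its parts are permuted relative to $\sort(a)$; the minimal-length such representative is exactly $v(b)$. A left swap, which moves a smaller-or-equal part leftward past a larger part, should correspond to multiplying the permutation on the appropriate side by a transposition in a length-increasing way, i.e.\ to taking a step \emph{up} in Bruhat order away from $v(a)$ toward longer permutations, or equivalently a step down from $w$. I would make this precise by checking that the covering relations of the Bruhat interval $[\,v(a), w\,]$ (restricted to the relevant parabolic coset, to account for equal parts) are realized precisely by left swaps, so that $b$ is reachable from $a$ by left swaps if and only if $v(b) \le w$.

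The main obstacle will be handling \emph{equal parts} correctly. When $a_i = a_j$, a left swap exchanging them produces the same weak composition, so it is a ``do-nothing'' move; on the permutation side this is exactly the ambiguity resolved by passing to the parabolic quotient $S_n / S_{\sort(a)}$ (the Young subgroup stabilizing $\sort(a)$), and one must verify that $v(b)$ is the correct minimal coset representative and that the left-swap condition $a_i \le a_j$ (with $\le$, not $<$) matches the Bruhat condition on this quotient. The cleanest route is to argue by induction on the number of left swaps, using the standard characterization of the Bruhat order on cosets via the one-line-notation comparison (entrywise domination of partial sums / the tableau criterion), and to verify that a left swap corresponds to a covering relation in this order; the reverse inclusion then follows since any $b$ with $v(b) \le w$ can be reached by descending a saturated chain in the interval, each step undoing a left swap. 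I expect the bookkeeping relating the ``$\ge$'' dominance order on compositions (used implicitly in left swaps) to Bruhat order on the parabolic quotient to be the only genuinely delicate point.
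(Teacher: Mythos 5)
Your plan is essentially the paper's proof: it establishes the set equality $\lswap(a)=\{b:\sort(b)=\sort(a),\ v(b)\le w\}$ by matching left swaps with downward steps in Bruhat order from $w$, and it handles equal parts exactly where the paper does (the paper resolves the tie by declaring the leftmost of two equal entries the greater when forming $v(b)$, which is your parabolic-quotient point in disguise). One small correction: since $w=v(a)$ by definition, the relevant object is the lower order ideal $\{v:v\le w\}$ rather than an interval $[v(a),w]$, and a left swap moves you \emph{down} from $w=v(a)$ --- also, a general left swap need not be a single Bruhat cover, which is why the paper inductively reduces to swaps in which every intermediate entry lies strictly outside $[a_i,a_j]$.
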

\begin{proof}
Let $\Shuffle(a)$ denote the set of all distinct rearrangements of the parts of $a$, including $0$ parts. Given $b\in \Shuffle(a)$, we need to show that $b\in \lswap(a)$ if and only if $v(b)\le w=v(a)$. Notice that $v(b)$ is the permutation whose $i$th entry is the position where $b_i$ ends up when $b$ is sorted into decreasing order, where for any pair of identical entries in $b$, the leftmost one is considered greater. For example, if $b=(0,1,0,3)$ then $v(b) = 3241$ (and not $4231$).

Suppose $b\in \lswap(a)$. Inductively, we may assume that $b$ is obtained from $a$ by a single left swap, say swapping entries $a_i<a_j$ with $i<j$, where all entries $a_k$ between $a_i$ and $a_j$ are either strictly smaller than $a_i$ or strictly larger than $a_j$. Then $v(b)$ is obtained by interchanging $w_i$ and $w_j$. Since $a_i<a_j$ we have $w_i>w_j$, and since all entries $a_k$ between $a_i$ and $a_j$ are either strictly smaller than $a_i$ or strictly larger than $a_j$, the same statement holds replacing $a_i, a_j, a_k$ with $w_i, w_j, w_k$. Hence $v(b)\le w$ (in fact, $w$ covers $v(b)$) in Bruhat order.

Conversely, suppose $b\in \Shuffle(a)$ and $v(b)\le w$. Then there is a chain in Bruhat order from $w$ down to $v(b)$. Going down one step in Bruhat order corresponds to interchanging $w_i$ and $w_j$ where $i<j$, $w_i>w_j$, and there is no $i<j<k$ such that $w_i<w_j<w_k$. The weak composition sorted by this new permutation is thus obtained by interchanging $a_i$ and $a_j$, and since $w_i>w_j$, necessarily $a_i\le a_j$. In particular, this is a left swap on $a$. Iterating by moving down a chain in Bruhat order, we find that at each step, the weak composition sorted by the current permutation is obtained from a left swap on the preceding weak composition, hence $b\in \lswap(a)$.
\end{proof}

\begin{lemma}\label{lem:Qlswap}
The $\{\overline{\qkey}\}$ expansion of a Demazure character is given by
\[\key_a = \sum_{b\in \Qlswap(a)} \overline\qkey_b.\]
\end{lemma}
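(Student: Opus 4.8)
The plan is to reduce the statement to a purely combinatorial identity about $\lswap$ and $\Qlswap$ and then prove that identity. Starting from Lemma~\ref{lem:lswap} we have $\key_a = \sum_{b\in\lswap(a)}\atom_b$, while expanding the definition of $\overline\qkey$ gives $\sum_{b\in\Qlswap(a)}\overline\qkey_b = \sum_{b\in\Qlswap(a)}\sum_{c\ge b,\,\flatten(c)=\flatten(b)}\atom_c$. Since the Demazure atoms form a basis of the polynomial ring, these two expressions agree if and only if
\[\lswap(a) = \bigsqcup_{b\in\Qlswap(a)}\{\,c : c\ge b,\ \flatten(c)=\flatten(b)\,\}\]
as a disjoint union counted with multiplicity. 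I would therefore devote the proof to establishing this set identity, which involves no polynomials.

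Disjointness is immediate: by definition each $b\in\Qlswap(a)$ is the dominance-minimum of its own flattening class inside $\lswap(a)$, so distinct elements of $\Qlswap(a)$ have distinct flattenings, and the $b$-th block consists only of compositions of flattening $\flatten(b)$. For the inclusion $\supseteq$, note that if $c\ge b$ and $\flatten(c)=\flatten(b)$ then $c$ is obtained from $b\in\lswap(a)$ by repeatedly moving a nonzero entry leftward past a zero, and each such move is a left swap; hence $c\in\lswap(a)$. Thus every block lies in $\lswap(a)$, and only the reverse inclusion remains.

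For $\subseteq$, fix $d\in\lswap(a)$, set $\alpha=\flatten(d)$ and write $L_\alpha=\{b\in\lswap(a):\flatten(b)=\alpha\}$; it suffices to show $L_\alpha$ has a dominance least element $b_\alpha$, since then $b_\alpha\in\Qlswap(a)$ and $d\ge b_\alpha$ places $d$ in the $b_\alpha$-block. First, every left swap weakly increases all partial sums, so $L_\alpha\subseteq V_\alpha:=\{b:\flatten(b)=\alpha,\ b\ge a\}$. The key observation is that for a fixed flattening $\alpha=(\alpha_1,\dots,\alpha_k)$ the condition $b\ge a$ reduces to lower bounds on the \emph{counts} of nonzero entries: if the nonzero entries of $b$ occupy the increasing positions $p_1<\cdots<p_k$, then the entries in the first $i$ columns always form a prefix $\alpha_1,\dots,\alpha_r$ of $\alpha$ with $r=\#\{m:p_m\le i\}$, so $\sum_{j\le i}b_j = P_r$, a strictly increasing function of $r$ (where $P_r=\alpha_1+\cdots+\alpha_r$). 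Hence $b\ge a$ is equivalent to $\#\{m:p_m\le i\}\ge\rho(i)$ for all $i$, where $\rho(i)=\min\{r:P_r\ge\sum_{j\le i}a_j\}$. Such count constraints are preserved under the coordinatewise maximum of two valid position vectors, because the surviving nonzero positions form the intersection of two prefixes, of size the minimum of the two counts. Thus $V_\alpha$ is closed under coordinatewise max of position vectors and has a unique dominance-minimum $b_\alpha^{V}$.

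The main obstacle is the final step: showing $b_\alpha^{V}\in\lswap(a)$, so that $L_\alpha=V_\alpha$ (using the upward closure established in the $\supseteq$ argument) and $b_\alpha=b_\alpha^{V}$ is the required least element. Equivalently, one must prove $\lswap(a)=\{b\in\Shuffle(a):b\ge a\}$, i.e. that every rearrangement of $a$ dominating $a$ is actually reachable by left swaps. I would prove this by induction, repeatedly applying to $a$ a left swap that moves it up in dominance while staying $\le b_\alpha^{V}$ (a sort-toward-$b_\alpha^{V}$ argument); alternatively it follows from the order-reversing correspondence between dominance order on the rearrangements of a fixed content and Bruhat order on their sorting permutations, whose single-step form already appears in the proof of Lemma~\ref{lem:lswap} via the equivalence $b\in\lswap(a)\iff v(b)\le v(a)$. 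Once this reachability is in hand, the three inclusions combine to give the set identity, and the lemma follows.
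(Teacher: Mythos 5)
Your reduction of the lemma to a set identity, your disjointness argument, and your inclusion $\supseteq$ all match the paper's proof, which consists of exactly these two observations: the up-set of each $b\in\Qlswap(a)$ inside its flattening class lies in $\lswap(a)$, and every element of $\lswap(a)$ lies in exactly one such up-set. The problem is your argument for $\subseteq$, which rests on the claim that $\lswap(a)=\{b\in\Shuffle(a):b\ge a\}$. That claim is false: dominance of one-line notations is strictly weaker than the Bruhat condition $v(b)\le v(a)$ of Lemma~\ref{lem:lswap}. For instance, take $a=(1,4,3,2)$ and $b=(3,2,4,1)$. Then $b\ge a$ (partial sums $3,5,9,10$ versus $1,5,8,10$), but $v(b)=2314\not\le 4123=v(a)$, since the two-element prefixes sorted increasingly are $(2,3)$ versus $(1,4)$ and $2>1$; so $b\notin\lswap(a)$. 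Your proposed ``sort toward $b^{V}_\alpha$ by dominance-increasing left swaps'' induction must therefore break down, and the ``order-reversing correspondence between dominance order and Bruhat order'' you invoke as an alternative does not exist.

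This is not a cosmetic issue, because $V_\alpha$ can strictly contain $L_\alpha$ and the dominance-minimum of $V_\alpha$ need not lie in $\lswap(a)$ at all. For $a=(0,1,4,3,2)$ and $\alpha=(2,3,4,1)$ one checks (via $v(a)=54123$ and Lemma~\ref{lem:lswap}) that $L_\alpha=\{(2,3,4,0,1),(2,3,4,1,0)\}$, whereas $V_\alpha$ consists of all five placements of $\alpha$ and has dominance-minimum $(0,2,3,4,1)$, which is not in $\lswap(a)$; the true element of $\Qlswap(a)$ for this class is $(2,3,4,0,1)$, not your $b^{V}_\alpha$. The fact that actually needs proving is that each nonempty flattening class $L_\alpha$ of $\lswap(a)$ has a dominance-least element whose up-set within the class is all of $L_\alpha$ (the paper treats the first half of this as immediate from the definition of $\Qlswap$ and proves only the up-set containment); any honest argument for it must go through the Bruhat-order description of $\lswap(a)$ rather than through dominance alone.
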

\begin{proof}
Observe that if $b\in \Qlswap(a)$ and $c\ge b$ with $\flatten(c) = \flatten(b)$, then $c\in \lswap(a)$. Moreover by definition, every $c\in \lswap(a)$ is either in $\Qlswap(a)$ or dominates some $b\in \Qlswap(a)$ that satisfies $\flatten(c)=\flatten(b)$. Therefore, we have the following refinement of Lemma~\ref{lem:lswap}:
\[\key_a = \sum_{b\in \Qlswap(a)} \sum_{\stackrel{c\ge b}{\flatten(c)=\flatten(b)}}\atom_c.\]
The statement then follows from the definition of $\overline\qkey_b$.
\end{proof}

\begin{lemma}
The polynomials $\{\overline\qkey_b\}$ form a basis of polynomials.
\end{lemma}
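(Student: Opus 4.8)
The plan is to show that the polynomials $\{\overline\qkey_b\}$, indexed by all weak compositions of a fixed length $n$, form a basis by exhibiting them as a unitriangular transformation of the Demazure atom basis $\{\atom_b\}$, which is already known to be a basis of the polynomial ring. Concretely, the defining formula
\[\overline\qkey_a = \sum_{\stackrel{b\ge a}{\flatten(b)=\flatten(a)}}\atom_b\]
expresses each $\overline\qkey_a$ as a sum of atoms indexed by weak compositions $b$ that dominate $a$ and have the same flattening. The key observation is that the diagonal term (the $b=a$ term) is always present, since $a\ge a$ and $\flatten(a)=\flatten(a)$ trivially hold, so the ``leading term'' of $\overline\qkey_a$ in the atom expansion is precisely $\atom_a$.

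First I would fix the partial order: restrict attention to weak compositions of a fixed length $n$, and order them by dominance $\ge$ (refined arbitrarily to a total order if desired). With respect to this order, the change-of-basis matrix $M$ from $\{\atom_b\}$ to $\{\overline\qkey_a\}$ has entries $M_{a,b}=1$ if $b\ge a$ and $\flatten(b)=\flatten(a)$, and $M_{a,b}=0$ otherwise. Since $M_{a,a}=1$ for every $a$, and $M_{a,b}\ne 0$ forces $b\ge a$, the matrix $M$ is upper unitriangular with respect to dominance order. An upper unitriangular matrix is invertible over $\mathbb{Z}$ (hence over any coefficient ring), so the transformation is invertible and $\{\overline\qkey_b\}$ spans the same space and has the same cardinality in each degree as $\{\atom_b\}$.

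The last step is to conclude that, since $\{\atom_b\}$ is a basis of the polynomial ring (as recalled in the introduction, Demazure atoms form a basis) and $\{\overline\qkey_b\}$ is obtained from it by an invertible triangular transformation, the collection $\{\overline\qkey_b\}$ is also a basis. One can present this cleanly by noting that within each fixed homogeneous degree, there are finitely many weak compositions, the relevant finite submatrix is unitriangular, and invertibility in each degree assembles to a basis of the whole polynomial ring.

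The only genuine subtlety — the step I would treat most carefully — is verifying that dominance genuinely gives a partial order making $M$ triangular, i.e. that the relation $\{b : b\ge a,\ \flatten(b)=\flatten(a)\}$ is compatible with a well-defined triangular structure and that no cycles arise. This is immediate because dominance $\ge$ is a genuine partial order (antisymmetric) on weak compositions of fixed length, so $b\ge a$ and $a\ge b$ force $a=b$; thus the only way both $M_{a,b}$ and $M_{b,a}$ are nonzero is $a=b$, which is exactly unitriangularity. I do not expect any real obstacle here; the argument is a standard triangularity-implies-basis argument, and all the required facts (that atoms form a basis, that dominance is a partial order) are already available.
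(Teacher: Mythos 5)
Your argument is correct, but it takes a different route from the paper. The paper deduces the lemma from Lemma~\ref{lem:Qlswap}: since every Demazure character $\key_a$ expands as a sum of $\overline\qkey_b$'s, the set $\{\overline\qkey_b\}$ spans the polynomial ring, and since it is indexed by the same set (weak compositions of length $n$, finitely many in each homogeneous degree) as the basis $\{\key_a\}$, a spanning set of the right cardinality must be a basis. You instead work directly from the defining formula $\overline\qkey_a = \sum_{b\ge a,\ \flatten(b)=\flatten(a)}\atom_b$ and observe that the transition matrix to the Demazure atom basis is unitriangular with respect to dominance order: the diagonal term $\atom_a$ always appears, every other term is indexed by some $b$ strictly dominating $a$, and antisymmetry of dominance rules out cycles, so the matrix is invertible degree by degree. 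Both arguments are sound. Yours is more self-contained, needing only the definition of $\overline\qkey_a$ and the fact that atoms form a basis, whereas the paper's proof leans on the already-established Lemma~\ref{lem:Qlswap} --- which it needs anyway for the transition-matrix comparison in Theorem~\ref{thm:atomexpansion}, so no work is wasted there. Your triangularity observation is also not redundant with the paper: it is essentially the same leading-term argument the paper later reuses to show $\{\Part_a\}$ is a basis (triangularity with ordinary monomials), so it fits the paper's toolkit naturally.
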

\begin{proof}
By Lemma~\ref{lem:Qlswap}, the polynomials $\{\overline\qkey_b\}$ generate the Demazure characters, hence they span the polynomial ring. They are indexed by the same set (weak compositions) as the Demazure characters, hence they are a basis. 
\end{proof}

\begin{theorem}\label{thm:atomexpansion}
Quasi-key polynomials expand positively in Demazure atoms, via the formula
\[\qkey_a = \sum_{\stackrel{b\ge a}{\flatten(b)=\flatten(a)}}\atom_b.\]
\end{theorem}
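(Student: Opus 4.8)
The plan is to observe that, since $\overline\qkey_a$ is \emph{defined} as $\sum_{b\ge a,\,\flatten(b)=\flatten(a)}\atom_b$, the theorem is precisely the assertion that $\qkey_a=\overline\qkey_a$ for every weak composition $a$. I would prove this equality not directly but by comparing how the two families sit underneath the Demazure character basis. Theorem~\ref{thm:keytoqkey} gives $\key_a=\sum_{b\in\Qlswap(a)}\qkey_b$, while Lemma~\ref{lem:Qlswap} gives $\key_a=\sum_{b\in\Qlswap(a)}\overline\qkey_b$; the two right-hand sides have the \emph{same} index set $\Qlswap(a)$ and all coefficients equal to $1$. Thus the Demazure characters expand into $\{\qkey_b\}$ and into $\{\overline\qkey_b\}$ via one and the same transition matrix, and the result will follow by inverting it.

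To make this precise I would introduce the matrix $T$ with $T_{a,b}=1$ if $b\in\Qlswap(a)$ and $T_{a,b}=0$ otherwise. Two structural features make $T$ manifestly invertible. First, every $b\in\Qlswap(a)$ is a rearrangement of $a$, since left swaps only permute parts, so $T$ is block diagonal with one finite block for each fixed multiset of parts and fixed length; I can therefore argue one block at a time. Second, a left swap replaces a pair $a_i\le a_j$ (with $i<j$) by the pair with the larger entry moved left, which only increases partial sums, so $b\in\lswap(a)$ forces $b\ge a$ in dominance; combined with $a\in\Qlswap(a)$ (giving $T_{a,a}=1$), this shows each block of $T$ is unitriangular with respect to any linear extension of dominance order, hence invertible.

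With $T$ invertible on each block, I would finish as follows. Both $\{\qkey_b\}$ (a basis by \cite{AS2}) and $\{\overline\qkey_b\}$ (a basis by the preceding lemma) restrict, within a fixed-content block, to linearly independent finite families of the correct cardinality, and $\{\key_a\}$ restricts to a linearly independent family spanning the same space. Since $\key_a=\sum_b T_{a,b}\qkey_b=\sum_b T_{a,b}\overline\qkey_b$ on this block, inverting $T$ yields
\[
\qkey_b=\sum_a (T^{-1})_{b,a}\,\key_a=\overline\qkey_b
\]
for every $b$ in the block, and hence for all weak compositions $b$.

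I expect the only real subtlety, and so the main obstacle, to be the bookkeeping needed to pass from the infinite transition matrix to genuine invertibility: one must note that all of these expansions are content-preserving, so that $T$ decomposes into finite blocks, and then verify that each block is unitriangular in dominance. Once this is in place the argument is pure linear algebra, and the combinatorial heart—that Theorem~\ref{thm:keytoqkey} and Lemma~\ref{lem:Qlswap} produce identical data—is immediate by inspection.
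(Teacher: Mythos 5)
Your proposal is correct and follows essentially the same route as the paper: both compare the two expansions $\key_a=\sum_{b\in\Qlswap(a)}\qkey_b$ and $\key_a=\sum_{b\in\Qlswap(a)}\overline\qkey_b$, note the transition matrices coincide, and invert to conclude $\qkey_b=\overline\qkey_b$. The only difference is that you make the invertibility explicit via content-preserving blocks and dominance-order unitriangularity, whereas the paper leaves this implicit (relying on $\{\qkey_b\}$ and $\{\overline\qkey_b\}$ both being bases); your added detail is accurate and harmless.
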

\begin{proof}
By Theorem~\ref{thm:keytoqkey} and Lemma~\ref{lem:Qlswap}, the transition matrix between $\{\key_a\}$ and $\{\qkey_b\}$ is identical to the transition matrix between $\{\key_a\}$ and $\{\overline{\qkey}_b\}$. Thus $\overline\qkey_b = \qkey_b$ for all weak compositions $b$.
\end{proof}

The quasi-key basis contains the \emph{quasi-Schur} basis of quasisymmetric polynomials introduced by \cite{HLMvW11a}. Thus we recover the Demazure atom expansion of a quasi-Schur polynomial.

\begin{corollary}[\cite{HLMvW11a}] Let $\alpha$ be a strong composition. The Demazure atom expansion of the quasi-Schur polynomial $QS_\alpha(x_1,\ldots , x_n)$ is given by
\[QS_\alpha(x_1,\ldots , x_n) = \sum_{\flatten(b)=\alpha}\atom_b,\]
where $b$ is a weak composition of length $n$.
\end{corollary}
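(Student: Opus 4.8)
The plan is to realize the quasi-Schur polynomial $QS_\alpha(x_1,\ldots,x_n)$ as a single quasi-key polynomial and then apply Theorem~\ref{thm:atomexpansion}. Write $\ell$ for the number of parts of $\alpha$; if $n<\ell$ then both sides vanish, so assume $n\ge\ell$ and set $a = 0^{n-\ell}\times\alpha$, the length-$n$ weak composition obtained by prepending $n-\ell$ zeros to $\alpha$. The first step is to invoke the containment of the quasi-Schur basis in the quasi-key basis discussed above: the quasi-key polynomial $\qkey_a$ is quasisymmetric precisely for this zeros-first choice of $a$, and it coincides with the quasi-Schur polynomial, i.e. $QS_\alpha(x_1,\ldots,x_n) = \qkey_a$ (see \cite{AS2}, \cite{HLMvW11a}).

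Granting this identification, Theorem~\ref{thm:atomexpansion} applied to $a$ gives
\[QS_\alpha(x_1,\ldots,x_n) = \qkey_a = \sum_{\stackrel{b\ge a}{\flatten(b)=\flatten(a)}}\atom_b.\]
Since $\flatten(a)=\alpha$, the indexing set is $\{\,b : b\ge a,\ \flatten(b)=\alpha\,\}$ with $b$ ranging over length-$n$ weak compositions. The remaining step is to show that for this particular $a$ the dominance constraint $b\ge a$ is automatically satisfied by every length-$n$ weak composition $b$ with $\flatten(b)=\alpha$, so that the sum runs over all such $b$ and matches the claimed formula.

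To establish this I would show that $a=0^{n-\ell}\times\alpha$ is the minimum in dominance order among all length-$n$ weak compositions with flattening $\alpha$. The point is that $a$ pushes its nonzero entries as far right as possible: the number of nonzero entries among the first $i$ positions of $a$ is $\max(0,\,i-(n-\ell))$, which is the least possible for any length-$n$ weak composition with $\ell$ nonzero parts. Because flattening preserves the left-to-right order of the nonzero entries, for any such $b$ the partial sum $b_1+\cdots+b_i$ equals $\alpha_1+\cdots+\alpha_k$, where $k$ is the number of nonzero entries among the first $i$ positions of $b$; as this count is at least the corresponding count for $a$ and the partial sums of $\alpha$ are monotonically increasing, we obtain $b_1+\cdots+b_i\ge a_1+\cdots+a_i$ for every $i$, that is, $b\ge a$. (The monomial slide example $\Mono_{0013}$ versus $\Mono_{0103}$ in Section~\ref{sec:definitions} illustrates exactly why the zeros-first composition is the right one.)

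I expect the only genuine obstacle to be the first step: pinning down and correctly attributing the identity $QS_\alpha(x_1,\ldots,x_n) = \qkey_{0^{n-\ell}\times\alpha}$, since this draws on the relationship between the quasi-Schur and quasi-key bases rather than on the immediately preceding results. Once that identification is in hand, the dominance-minimality verification is elementary bookkeeping and the corollary drops out of Theorem~\ref{thm:atomexpansion}.
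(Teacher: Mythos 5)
Your proposal follows essentially the same route as the paper: it cites the identification $QS_\alpha(x_1,\ldots,x_n)=\qkey_{0^{n-\ell(\alpha)}\times\alpha}$ from \cite{AS2} and then applies Theorem~\ref{thm:atomexpansion}. The only difference is that you explicitly verify that $0^{n-\ell}\times\alpha$ is dominance-minimal among length-$n$ weak compositions flattening to $\alpha$ (so the constraint $b\ge a$ is vacuous), a step the paper leaves implicit; your verification is correct.
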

\begin{proof}
Theorem 4.14 of \cite{AS2} establishes that 
\[QS_{\alpha}(x_1,\ldots , x_n) = \qkey_{0^{n-\ell(\alpha)}\times \alpha},\]
where $\ell(\alpha)$ is the number of entries in $\alpha$. The statement then follows from Theorem~\ref{thm:atomexpansion}.  
\end{proof}

Similarly, we can prove the column quasi-key polynomials (Definition~\ref{def:qkey1}) are precisely the Demazure atoms.

\begin{theorem}\label{thm:colquasikeyisatom}
For any weak composition $a$, we have $\atom_a = \qkey^{(1)}_a$.
\end{theorem}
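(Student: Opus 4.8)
The goal is to show $\atom_a = \qkey^{(1)}_a$, where $\atom_a$ counts semi-skyline augmented fillings (equivalently, elements of $\ASSF(a)$, with first-column entries equal to their row index) and $\qkey^{(1)}_a$ counts quasi-key tableaux in $\qKT^{(1)}(a)$ (also with first-column entries equal to their row index). Both are generating functions summed over fillings of the \emph{same} diagram $D(a)$, with the same weight statistic $\wt$, and with the same first-column constraint. So the natural plan is to exhibit a weight-preserving bijection between $\ASSF(a)$ and $\qKT^{(1)}(a)$; since the weight is determined by the multiset of entries, it would suffice to show the two sets of fillings actually coincide, or more realistically to biject them in a way that fixes the content.

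The plan is to leverage the already-established decomposition results rather than argue on individual diagrams directly. By Theorem~\ref{thm:atomexpansion} we know $\qkey_a = \sum_{b\ge a,\ \flatten(b)=\flatten(a)}\atom_b$, and by Lemma~\ref{lem:columnqkey} we know $\qkey_a = \sum_{b\ge a,\ \flatten(b)=\flatten(a)}\qkey^{(1)}_b$. Thus the two families $\{\atom_b\}$ and $\{\qkey^{(1)}_b\}$ satisfy the \emph{identical} triangular (with respect to dominance) summation relation against the single family $\{\qkey_a\}$. First I would record that the index set $\{b : b\ge a,\ \flatten(b)=\flatten(a)\}$ is the same on both sides, and that dominance order restricted to weak compositions with a fixed flattening is a genuine partial order with $a$ as its unique minimum. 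Then, viewing these relations as a unitriangular linear system, I would invert it: since both $\atom_a$ and $\qkey^{(1)}_a$ are obtained from the same linear combination of the $\qkey_b$'s (by Möbius inversion of the same triangular matrix), they must be equal for every $a$.

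Concretely, I would argue by induction on dominance order within a fixed flattening class. The base case is the dominance-maximal $b$ in the class, where the sum on each side has the single term $\atom_b$ respectively $\qkey^{(1)}_b$, forcing $\atom_b = \qkey_b = \qkey^{(1)}_b$. For the inductive step, subtract the equal summations: from $\qkey_a = \atom_a + \sum_{b>a,\ \flatten(b)=\flatten(a)}\atom_b$ and $\qkey_a = \qkey^{(1)}_a + \sum_{b>a,\ \flatten(b)=\flatten(a)}\qkey^{(1)}_b$, the inductive hypothesis $\atom_b = \qkey^{(1)}_b$ for all $b>a$ in the class cancels the tails, leaving $\atom_a = \qkey^{(1)}_a$. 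This is clean and avoids any explicit combinatorics on fillings.

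The main obstacle is simply verifying that the two triangular relations are genuinely over the same index set and the same order so that the inductive cancellation is valid; in particular I must confirm that $\flatten(b)=\flatten(a)$ together with $b\ge a$ gives a finite poset with a well-defined maximum (or at least that the induction is well-founded), which follows since all such $b$ are rearrangements of $a$ with the same nonzero parts and dominance is a partial order on this finite set. A secondary point worth stating explicitly is that Theorem~\ref{thm:atomexpansion}'s proof already shows $\overline\qkey_b = \qkey_b$, so the atom-side relation is exactly $\qkey_a = \sum_{b\ge a,\ \flatten(b)=\flatten(a)}\atom_b$ with no hidden discrepancy against Lemma~\ref{lem:columnqkey}. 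Once those bookkeeping facts are in place, the equality $\atom_a = \qkey^{(1)}_a$ is immediate by unitriangular inversion, and I expect the entire argument to be short.
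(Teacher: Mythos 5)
Your proposal is correct and follows essentially the same route as the paper: it combines Theorem~\ref{thm:atomexpansion} with Lemma~\ref{lem:columnqkey} to observe that $\{\atom_b\}$ and $\{\qkey^{(1)}_b\}$ satisfy the identical unitriangular relation against $\{\qkey_a\}$, and concludes equality by inverting that relation. The paper states this as "the transition matrices are identical, hence the bases are equal," and your explicit downward induction on dominance within a flattening class is just a spelled-out version of that same inversion.
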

\begin{proof}
Theorem~\ref{thm:atomexpansion} establishes that
\[\qkey_a = \sum_{\stackrel{b\ge a}{\flatten(b)=\flatten(a)}}\atom_b.\]
On the other hand, by Lemma~\ref{lem:columnqkey},
\[\qkey_a = \sum_{\stackrel{b\ge a}{\flatten(b)=\flatten(a)}}\qkey^{(1)}_b.\]
Therefore, the transition matrix between $\{\qkey_a\}$ and $\{\atom_b\}$ is identical to the transition matrix between $\{\qkey_a\}$ and $\{\qkey^{(1)}_b\}$. Hence $\atom_b = \qkey^{(1)}_b$ for all $b$.
\end{proof}

As a result, column quasi-key polynomials (and thus quasi-key polynomials) may be expressed in terms of semi-skyline fillings as well as quasi-key tableaux, as stated in Theorem C.

\begin{corollary}
\[\qkey_a = \sum_{S\in \SSF(a)} x^{\wt(S)}\]
where entries in the first column of $S$ are at most their row index and decrease from top to bottom.
\end{corollary}

The combinatorial connections between semi-skyline fillings and quasi-key tableaux will be explored in greater depth in Section~\ref{sec:bijection}.

\subsection{Littlewood-Richardson rule}

We now give a positive combinatorial formula for the expansion of the product of a quasi-key polynomial and a Schur polynomial in the quasi-key basis.

We recall the definition of \emph{Littlewood-Richardson skew skyline tableaux} ($\LRS$) from \cite{HLMvW11b}. Let $a$ and $b$ be weak compositions of length $n$ with $a_i\le b_i$ for all $i$. Then an $\LRS$ of shape $b/a$ is a filling of $D(b)$ such that both the basement and the boxes of $D(a)\subseteq D(b)$ are filled with asterisks ``$\ast$'' and the remaining boxes are filled with positive integers such that the filling weakly decreases along rows, does not repeat an entry in any column, and all triples are inversion triples.  (In determining the status of triples involving $\ast$ symbols, following \cite{HLMvW11b} we declare that $\ast=\infty$, $\ast$ symbols in the same row are equal, and $\ast$ symbols in the same column increase from top to bottom.) For example, see Figure~\ref{fig:LRS0103}.

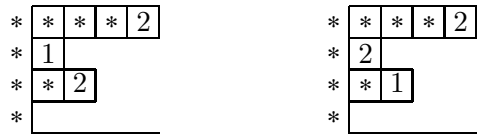
\begin{figure}[ht]
\begin{center}
\begin{picture}(192,48)

\put(3.5,3.5){$\ast$}
\put(3.5,15.5){$\ast$}
\put(3.5,27.5){$\ast$}
\put(3.5,39.5){$\ast$}

\put(15.5,15.5){$\ast$}
\put(27.5,15.5){$2$}

\put(15.5,26.5){$1$}

\put(15.5,39.5){$\ast$}
\put(27.5,39.5){$\ast$}
\put(39.5,39.5){$\ast$}
\put(51.5,38.5){$2$}

\put(12,0){\line(1,0){48}}
\put(12,0){\line(0,1){48}}

\put(12,12){\line(1,0){24}}
\put(12,24){\line(1,0){24}}
\put(12,36){\line(1,0){48}}
\put(12,48){\line(1,0){48}}

\put(24,12){\line(0,1){36}}
\put(36,12){\line(0,1){12}}
\put(36,36){\line(0,1){12}}
\put(48,36){\line(0,1){12}}
\put(60,36){\line(0,1){12}}




\put(123.5,3.5){$\ast$}
\put(123.5,15.5){$\ast$}
\put(123.5,27.5){$\ast$}
\put(123.5,39.5){$\ast$}

\put(135.5,15.5){$\ast$}
\put(147.5,15.5){$1$}

\put(135.5,26.5){$2$}

\put(135.5,39.5){$\ast$}
\put(147.5,39.5){$\ast$}
\put(159.5,39.5){$\ast$}
\put(171.5,38.5){$2$}

\put(132,0){\line(1,0){48}}
\put(132,0){\line(0,1){48}}

\put(132,12){\line(1,0){24}}
\put(132,24){\line(1,0){24}}
\put(132,36){\line(1,0){48}}
\put(132,48){\line(1,0){48}}

\put(144,12){\line(0,1){36}}
\put(156,12){\line(0,1){12}}
\put(156,36){\line(0,1){12}}
\put(168,36){\line(0,1){12}}
\put(180,36){\line(0,1){12}}

\end{picture}
\caption{\label{fig:LRS0103}Two $\LRS$ of shape $(0,2,1,4)/(0,1,0,3)$.}
\end{center}
\end{figure}


Given an $\LRS$ $L$, form the \emph{column word} of $L$ by reading entries of $L$ from bottom to top in each column, starting with the rightmost column and working leftwards, and ignoring asterisks. A column word whose largest letter is $k$ is said to be \emph{contre-lattice} if for all $i$, the subword consisting of the first $i$ letters has at least as many $k$'s as $k-1$'s, at least as many $k-1$'s as $k-2$'s, etc. For example, $4432314213$ is contre-lattice. Let $\lambda = \lambda_1\ge \ldots \ge \lambda_\ell>0$ be a partition of length $\ell$ and let $\lambda^\ast$ be the multiset of numbers $1^{\lambda_\ell} 2^{\lambda_{\ell-1}} \ldots \ell^{\lambda_1}$.  

With this, \cite{HLMvW11b} give the following Littlewood-Richardson rule for the Demazure atom expansion of the product of a Demazure atom and a Schur polynomial. 

\begin{theorem}[\cite{HLMvW11b}]\label{thm:atomxschur}
Let $a$ be a weak composition of length $n$ and $\lambda$ a partition. Then
\[\atom_a \cdot s_\lambda(x_1,\ldots , x_n) = \sum_b c_{a,\lambda}^b\atom_b,\]
where $b$ is a weak composition of length $n$ and $c_{a,\lambda}^b$ is the number of $\LRS$ of shape $b/a$, content $\lambda^\ast$.
\end{theorem}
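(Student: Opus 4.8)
The plan is to prove the identity bijectively, matching monomials on both sides via an insertion procedure. First I would expand each factor as a generating function over fillings. By Theorem~\ref{thm:atomSSAF} we have $\atom_a = \sum_{S\in\SSAF(a)} x^{\wt(S)}$ with basement $i$ in row $i$, and writing the Schur polynomial as a sum $s_\lambda(x_1,\ldots,x_n) = \sum_T x^{\wt(T)}$ over reverse semistandard Young tableaux $T$ of shape $\lambda$ with entries in $\{1,\ldots,n\}$, the left-hand side becomes
\[
\atom_a\cdot s_\lambda(x_1,\ldots,x_n) \;=\; \sum_{(S,T)} x^{\wt(S)+\wt(T)},
\]
the sum over all pairs $(S,T)$ with $S\in\SSAF(a)$ and $T$ a reverse SSYT of shape $\lambda$. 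Since the claimed right-hand side equals $\sum_b c_{a,\lambda}^b\sum_{U\in\SSAF(b)} x^{\wt(U)}$, it suffices to produce a weight-preserving bijection $(S,T)\mapsto(U,L)$ in which $U\in\SSAF(b)$ for some weak composition $b$, the object $L$ is an $\LRS$ of shape $b/a$ with content $\lambda^\ast$ and contre-lattice column word, and $\wt(U)=\wt(S)+\wt(T)$.

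I would build this bijection from an analogue of Schensted insertion for semi-skyline augmented fillings. Reading the entries of $T$ in column-reading order (bottom-to-top within each column, columns right-to-left, to match the convention used to define the column word and contre-lattice), I insert them one at a time into $S$. Each insertion follows a bump path and terminates by creating a single new box; performing all insertions grows the shape from $a$ to some $b$ with $D(a)\subseteq D(b)$, and the resulting filling is the output $U\in\SSAF(b)$. The recording object $L$ is the filling of the skew region $D(b)/D(a)$, with $\ast$ on $D(a)$ and the basement, in which each newly created box is labelled by the index recording the cell of $T$ that triggered it. Weight preservation $\wt(U)=\wt(S)+\wt(T)$ is then automatic, because the inserted values are exactly the entries absorbed into $U$ while $L$ records only positions.

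The substance of the argument is to verify three things. First, skyline insertion is well-defined: each step preserves the weakly-decreasing-row, distinct-column, and inversion-triple conditions, so $U$ is genuinely a semi-skyline augmented filling. Second, the procedure is invertible: from $(U,L)$ one reads off the order in which boxes were added and reverse-bumps to recover $(S,T)$ uniquely. Third, the recording filling $L$ is exactly an $\LRS$ of shape $b/a$, and its column word is contre-lattice of content $\lambda^\ast$ precisely when $T$ is a valid reverse SSYT of shape $\lambda$; this is the skyline analogue of the classical fact that the recording tableau in an insertion is a Littlewood-Richardson tableau exactly when the inserted word is the reading word of a single straight-shape tableau. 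Granting these, the number of admissible $L$ of shape $b/a$ is by definition $c_{a,\lambda}^b$ and carries no weight, so summing the bijection over all $(S,T)$ and grouping by $b$ yields $\sum_b c_{a,\lambda}^b\atom_b$.

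I expect the main obstacle to be the first point together with the precise matching in the third. Bump paths in the skyline setting are governed by the inversion-triple conditions rather than by comparisons in left-justified rows and columns, so controlling where a bumped value lands and checking that no coinversion triple is ever created is considerably more delicate than in ordinary RSK; likewise, translating ``the inserted word is the column-reading word of a reverse SSYT of shape $\lambda$'' into ``the recording column word is contre-lattice of content $\lambda^\ast$'' requires carefully tracking the reversal and transposition built into the definitions of $\lambda^\ast$ and of the column word. A cleaner but less self-contained alternative would be to transport the whole problem to ordinary tableaux through Mason's weight-preserving bijection between $\SSAF$ and $\SSYT$ and then invoke the classical skew Littlewood-Richardson rule; however, because $a$ is an arbitrary weak composition rather than a partition, making that reduction rigorous reintroduces essentially the same combinatorial bookkeeping.
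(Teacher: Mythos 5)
First, note that the paper does not actually prove this statement: Theorem~\ref{thm:atomxschur} is imported verbatim from \cite{HLMvW11b}, so there is no internal proof to compare yours against. Your strategy --- expand $\atom_a$ over $\SSAF(a)$ via Theorem~\ref{thm:atomSSAF}, expand $s_\lambda(x_1,\ldots,x_n)$ over reverse semistandard Young tableaux, and build a weight-preserving bijection by inserting the entries of $T$ into $S$ through a skyline analogue of Schensted insertion, with the recording filling becoming the $\LRS$ --- is in fact the strategy of the original proof, which rests on Mason's insertion algorithm for semi-skyline augmented fillings \cite{Mason08} and its compatibility with the contre-lattice condition.

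That said, what you have written is a program rather than a proof. The three items you yourself flag as ``the substance of the argument'' --- that skyline insertion preserves the weakly-decreasing-row, distinct-column and inversion-triple conditions and terminates in a single new box; that the procedure is reversible from $(U,L)$; and that $L$ is an $\LRS$ with contre-lattice column word of content $\lambda^\ast$ exactly when $T$ is a reverse semistandard Young tableau of shape $\lambda$ --- constitute essentially the entire content of the theorem, and none of them is carried out. In particular, you never define the bump path: where a displaced entry goes, and why the inversion-triple condition forces a unique landing position, is precisely the point at which a naive transcription of RSK breaks down and where Mason's construction does real work. Likewise, the recording convention is not specified precisely enough to check the contre-lattice equivalence; the multiset $\lambda^\ast$ reverses and transposes the row data of $\lambda$, and an off-by-one in this bookkeeping silently changes which fillings $L$ are counted, so the claim that ``contre-lattice $\Leftrightarrow$ $T$ is a straight-shape tableau'' cannot be waved through by analogy with the classical case. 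To make this self-contained you must either carry out these verifications or explicitly invoke the insertion machinery of \cite{Mason08} together with the relevant lemmas of \cite{HLMvW11b}; as it stands, the argument establishes only that the theorem is plausible and that a known technique ought to apply.
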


We now define operators $\swap_{i,i+1}$ and $\swap_{i, i-1}$ on $\LRS$. Let $L$ be an $\LRS$ with an occupied row $i$. If row $i+1$ is empty, let $\swap_{i, i+1}(L)$ be the diagram obtained by moving all entries of row $i$ up to row $i+1$; similarly if row $i-1$ is empty, let $\swap_{i, i-1}(L)$ be the diagram obtained by moving all entries of row $i$ down to row $i-1$. For example, see Figure~\ref{fig:swap}.

\begin{figure}[ht]
\begin{center}
\begin{picture}(288,48)

\put(3.5,3.5){$\ast$}
\put(3.5,15.5){$\ast$}
\put(3.5,27.5){$\ast$}
\put(3.5,39.5){$\ast$}

\put(15.5,14.5){$2$}
\put(27.5,14.5){$1$}

\put(15.5,3.5){$\ast$}

\put(15.5,39.5){$\ast$}
\put(27.5,39.5){$\ast$}
\put(39.5,39.5){$\ast$}
\put(51.5,38.5){$2$}

\put(12,0){\line(1,0){48}}
\put(12,0){\line(0,1){48}}

\put(12,12){\line(1,0){24}}
\put(12,24){\line(1,0){24}}
\put(12,36){\line(1,0){48}}
\put(12,48){\line(1,0){48}}

\put(24,0){\line(0,1){24}}
\put(24,36){\line(0,1){12}}

\put(36,12){\line(0,1){12}}
\put(36,36){\line(0,1){12}}
\put(48,36){\line(0,1){12}}
\put(60,36){\line(0,1){12}}




\put(111.5,3.5){$\ast$}
\put(111.5,15.5){$\ast$}
\put(111.5,27.5){$\ast$}
\put(111.5,39.5){$\ast$}

\put(123.5,26.5){$2$}
\put(135.5,26.5){$1$}

\put(123.5,3.5){$\ast$}

\put(123.5,39.5){$\ast$}
\put(135.5,39.5){$\ast$}
\put(147.5,39.5){$\ast$}
\put(159.5,38.5){$2$}

\put(120,0){\line(1,0){48}}
\put(120,0){\line(0,1){48}}

\put(120,12){\line(1,0){12}}
\put(120,24){\line(1,0){24}}
\put(120,36){\line(1,0){48}}
\put(120,48){\line(1,0){48}}

\put(132,0){\line(0,1){12}}
\put(132,24){\line(0,1){24}}
\put(144,24){\line(0,1){24}}
\put(156,36){\line(0,1){12}}
\put(168,36){\line(0,1){12}}



\put(219.5,3.5){$\ast$}
\put(219.5,15.5){$\ast$}
\put(219.5,27.5){$\ast$}
\put(219.5,39.5){$\ast$}

\put(231.5,26.5){$2$}
\put(243.5,26.5){$1$}

\put(231.5,15.5){$\ast$}

\put(231.5,39.5){$\ast$}
\put(243.5,39.5){$\ast$}
\put(255.5,39.5){$\ast$}
\put(267.5,38.5){$2$}

\put(228,0){\line(1,0){48}}
\put(228,0){\line(0,1){48}}

\put(228,12){\line(1,0){12}}
\put(228,24){\line(1,0){24}}
\put(228,36){\line(1,0){48}}
\put(228,48){\line(1,0){48}}

\put(240,12){\line(0,1){36}}
\put(252,24){\line(0,1){24}}
\put(264,36){\line(0,1){12}}
\put(276,36){\line(0,1){12}}

\end{picture}
  \caption{\label{fig:swap}An $\LRS$ $L$ (left), $L'=\swap_{2,3}(L)$ (middle), $L''=\swap_{1,2}(L')$ (right).}
\end{center}
\end{figure}
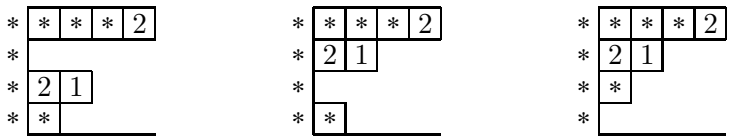

The following will play an important role in the proof of our Littlewood-Richardson rule.

\begin{lemma}\label{lem:rowtranslate}
If $L$ is an $\LRS$, then $\swap_{i, i+1}(L)$ and $\swap_{i, i-1}(L)$ also satisfy the triple conditions, and have a contre-lattice column word if and only if the column word of $L$ is contre-lattice.
\end{lemma}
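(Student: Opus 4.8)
The plan is to observe that each swap relocates an entire occupied row into an adjacent \emph{empty} row without crossing any other occupied box, and then to argue that this simultaneously preserves the data governing both the column word and the inversion status of every triple. First I would record the following elementary features of $\swap_{i,i+1}$ (the argument for $\swap_{i,i-1}$ being identical after reflecting): the operation fixes the entry and the column of every box; it preserves the length of every occupied row; within each fixed column it preserves the bottom-to-top order of the boxes, and hence of the entries (finite or $\ast$); and for the moved row versus any other occupied row it preserves which of the two is higher. The last point is the crux: since row $i+1$ is empty in $L$ and no integer lies strictly between $i$ and $i+1$, the moved row passes no other occupied row, so for every occupied $k \neq i, i+1$ we have $k > i \iff k > i+1$ and $k < i \iff k < i+1$.

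For the contre-lattice statement I would note that, because columns are fixed and the bottom-to-top order of boxes within each column is unchanged, the column word of $\swap_{i,i+1}(L)$ (read right-to-left, bottom-to-top, ignoring asterisks) is literally identical to that of $L$. Hence the contre-lattice condition holds for one exactly when it holds for the other.

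For the triple conditions I would set up a status-preserving bijection between the triples of $L$ and those of $\swap_{i,i+1}(L)$. A triple is specified by two horizontally adjacent boxes together with a third box lying above the right box or below the left box, subject to the Type A / Type B length comparison between the two rows involved. Relocating the moved row carries each such configuration to a configuration on the same three entries: adjacency is preserved because columns are fixed; the above/below relation is preserved by the order property above; and the Type A / Type B length comparison is preserved because all row lengths are preserved. Consequently a configuration is a Type A (resp.\ Type B) triple in $L$ if and only if its image is a triple of the same type in $\swap_{i,i+1}(L)$, and since the three entries are unchanged, it is an inversion triple in one exactly when it is in the other. As $L$ is an $\LRS$, all its triples are inversion triples, so the same holds for $\swap_{i,i+1}(L)$.

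The main obstacle I anticipate is bookkeeping the cases involving asterisks---triples meeting the basement or the cells of $D(a)$---under the conventions $\ast = \infty$, equal $\ast$'s in a row, and $\ast$'s increasing down a column. I would handle these uniformly by emphasizing that the order property also holds for asterisk boxes: moving a row into an adjacent empty row preserves the bottom-to-top order of boxes in every column, the basement column included, so the $\ast=\infty$ and increasing-down-a-column conventions evaluate each relocated triple exactly as they evaluated its preimage. No separate computation is then needed, since the classification depends only on the unchanged entries and the preserved triple type.
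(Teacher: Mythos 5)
Your argument is correct and is essentially the paper's own proof, which simply observes that translating an occupied row into an adjacent empty row (preserving the relative order of occupied rows) changes neither the column word nor the status of any triple; you have merely spelled out the bookkeeping in more detail. No issues to flag.
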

\begin{proof}
Translating occupied rows (while retaining their relative order) does not change the column word or the status of any triple of entries.
\end{proof}

Given weak compositions $a$ and $b$ of length $n$, define $\LRS(a,b)$ to be the set of $\LRS$ of shape $b/c$ where $c$ is any weak composition of length $n$ satisfying $c\ge a$ and $\flatten(c) = \flatten(a)$. Say an element $L\in \LRS(a,b)$ is \emph{highest-weight} if for every row $i$ of $L$, $\swap_{i, i+1}(L)$ is not an element of $\bigcup_d \LRS(a,d)$, where $d$ ranges over weak compositions of length $n$. Denote the set of highest-weight elements of $\LRS(a,b)$ by $\HLRS(a,b)$.

\begin{example} Suppose $n=4$ and $a=(0,1,0,3)$. Then both the two $\LRS$ in Figure~\ref{fig:LRS0103} and the rightmost $\LRS$ in Figure~\ref{fig:swap} are highest-weight: we would not be able to apply e.g. $\swap_{4,5}$, since the result would no longer be inside $\bigcup_d \LRS(a,d)$. The leftmost two $\LRS$ in Figure~\ref{fig:swap} are not highest-weight.
\end{example}

We may now state our Littlewood-Richardson rule for quasi-key polynomials.

\begin{theorem}
Let $a$ be a weak composition of length $n$ and $\lambda$ a partition. Then
\[\qkey_a\cdot s_\lambda(x_1,\ldots , x_n) = \sum_b C_{a,\lambda}^b\qkey_b,\]
where $b$ is a weak composition of length $n$ and $C_{a,\lambda}^b$ is the number of $\HLRS(a,b)$ with content $\lambda^\ast$.
\end{theorem}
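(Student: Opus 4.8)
The plan is to deduce the quasi-key Littlewood-Richardson rule from the Demazure atom rule (Theorem~\ref{thm:atomxschur}) by combining it with the atom expansion of a quasi-key polynomial (Theorem~\ref{thm:atomexpansion}). The guiding idea is that $\qkey_a = \sum_{c\ge a,\ \flatten(c)=\flatten(a)}\atom_c$, so that
\[
\qkey_a\cdot s_\lambda = \sum_{\substack{c\ge a\\ \flatten(c)=\flatten(a)}}\atom_c\cdot s_\lambda = \sum_{\substack{c\ge a\\ \flatten(c)=\flatten(a)}}\ \sum_{d}\ (\#\{\LRS\text{ of shape }d/c,\ \text{content }\lambda^\ast\})\ \atom_d.
\]
By the definition of $\LRS(a,b)$, the inner terms range exactly over all elements of $\bigcup_d\LRS(a,d)$: the union over admissible starting shapes $c$ collects precisely the skew skyline tableaux whose lower shape dominates $a$ and flattens to $\flatten(a)$. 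So the product $\qkey_a\cdot s_\lambda$ is the generating function $\sum_{L}\atom_{\mathrm{outer}(L)}$ over all $L\in\bigcup_d\LRS(a,d)$ of content $\lambda^\ast$, where $\mathrm{outer}(L)$ denotes the outer (upper) shape of $L$.

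\emph{First} I would make the displayed computation above precise, checking that the double sum indexes each relevant $\LRS$ exactly once and that content $\lambda^\ast$ is preserved throughout. \emph{Next}, the heart of the argument is to reorganize this atom generating function into quasi-key polynomials. Since each $\qkey_b=\sum_{e\ge b,\ \flatten(e)=\flatten(b)}\atom_e$ is itself a positive sum of atoms, I want to partition $\bigcup_d\LRS(a,d)$ into orbits, each orbit contributing exactly the atoms making up a single $\qkey_b$, with $b$ read off from a canonical representative of the orbit. The operators $\swap_{i,i+1}$ and $\swap_{i,i-1}$ are designed for exactly this: by Lemma~\ref{lem:rowtranslate}, translating occupied rows preserves both the triple conditions and the contre-lattice (equivalently, the content-$\lambda^\ast$) condition, so these operators act within $\bigcup_d\LRS(a,d)$ and generate an equivalence relation whose orbits are the desired blocks. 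The highest-weight elements $\HLRS(a,b)$ are the canonical representatives, and the outer shapes occurring in one orbit should be precisely the weak compositions $e\ge b$ with $\flatten(e)=\flatten(b)$ for the $b$ attached to the highest-weight element.

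\emph{The main obstacle} will be establishing that each $\swap$-orbit matches up bijectively and weight-consistently with the atom support of a single quasi-key polynomial: concretely, that for each $L\in\HLRS(a,b)$ the set of outer shapes reachable from $L$ by sequences of downward swaps $\swap_{i,i-1}$ is exactly $\{e : e\ge b,\ \flatten(e)=\flatten(b)\}$, each occurring once. This requires two things. First, I must pin down what $b$ is for a highest-weight $L$ --- presumably $b=\mathrm{outer}(L)$ for the top element, which by the highest-weight condition has its occupied rows pushed as high as legally possible inside $\bigcup_d\LRS(a,d)$ --- and verify $\flatten$ is constant on the orbit (immediate, since swaps only relocate whole occupied rows). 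Second, and more delicately, I must show the downward-swap orbit of outer shapes realizes precisely the dominance-and-flatten class of $b$: moving occupied rows down weakly decreases partial sums, generating dominated compositions, and the constraint that we remain in $\bigcup_d\LRS(a,d)$ (i.e.\ the lower shape stays $\ge a$ with correct flattening) should cut this down to exactly the required set. Proving this is a bijection rather than merely a surjection --- that distinct swap-reachable configurations give distinct outer shapes and that no outer shape is double-counted across different highest-weight orbits --- is where the combinatorial care is needed; I anticipate leaning on the fact that an occupied configuration of rows is determined by its set of occupied row indices together with the (swap-invariant) data of the fillings, so that the outer shape determines the swap-configuration uniquely within its orbit. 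Once this orbit decomposition is in hand, summing $\sum_{L\in\HLRS(a,b)}\qkey_b$ recovers $\qkey_a\cdot s_\lambda$, giving $C_{a,\lambda}^b=\#\{L\in\HLRS(a,b):\text{content }\lambda^\ast\}$ as claimed.
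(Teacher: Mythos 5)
Your proposal follows the paper's proof essentially step for step: expand $\qkey_a$ into atoms via Theorem~\ref{thm:atomexpansion}, apply the atom--Schur rule of \cite{HLMvW11b} to index the resulting atoms by $\bigcup_d\LRS(a,d)$, then use Lemma~\ref{lem:rowtranslate} to partition this set into $\swap$-orbits whose highest-weight (dominance-minimal) representatives lie in $\HLRS(a,b)$ and whose outer shapes sweep out $\{e: e\ge b,\ \flatten(e)=\flatten(b)\}$, so each orbit contributes exactly one $\qkey_b$. The only blemish is a directional slip in your heuristic --- moving occupied rows \emph{down} increases the partial sums, producing compositions that dominate $b$, which is consistent with your stated claim that downward swaps from a highest-weight element realize $e\ge b$.
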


\begin{proof}
By Theorem~\ref{thm:atomexpansion}, the left-hand side is equal to 
\[\sum_{\substack{c\ge a \\ \flatten(c)=\flatten(a)}}\atom_c\cdot s_\lambda(x_1,\ldots x_n).\]

By Theorem~\ref{thm:atomxschur}, the Demazure atom expansion of this expression is indexed by the elements of $\bigcup_d \LRS(a,d)$ (where $d$ ranges over weak compositions of length $n$) with content $\lambda^\ast$ and contre-lattice column word. We need to collect these Demazure atoms into quasi-key polynomials.

Let $L\in \bigcup_d \LRS(a,d)$ be an $\LRS$ indexing a Demazure atom appearing in the expansion of the left hand side. Consider the closure of the action of the operators $\swap_{i,i+1}$ and $\swap_{i,i-1}$ on $L$, with the restriction that these operators may be applied only if the image remains in $\bigcup_d \LRS(a,d)$. By Lemma~\ref{lem:rowtranslate}, everything thus obtained is an element of $\bigcup_d \LRS(a,d)$ with content $\lambda^\ast$ and contre-lattice column word, hence indexes an atom in the expansion of the the left-hand side. Since the swap operators do not change the rows of an $\LRS$, but only move them up and down, this action of the swap operators partitions $\bigcup_d \LRS(a,d)$ into equivalence classes.

Consider such an equivalence class. The $\LRS$ whose underlying composition is smallest in dominance order (say $b$) in this class is by definition an element of $\HLRS(a,b)$, and each element of $\HLRS(a,b)$ belongs to exactly one equivalence class. Moreover, by the definition of the swap operators, the underlying weak compositions of the elements of an equivalence class are exactly those $b'$ satisfying $b'\ge b$ and $\flatten(b')=\flatten(b)$. Hence the sum of all atoms in the class is $\qkey_b$.
\end{proof}

%
\section{The fundamental particle basis of polynomials}\label{sec:particle}
%
Completing the picture of $\mathcal{P}$, and further connecting Demazure atoms and fundamental slide polynomials, we introduce the \emph{fundamental particle} basis $\{\Part_a\}$ of the polynomial ring. 

By Theorem~\ref{thm:atomexpansion} we have 
\[\qkey_a = \sum_{\substack{b\ge a \\ \flatten(b) = \flatten(a)}}\atom_b\]
and clearly monomial slide polynomials expand into ordinary monomials via the same rule:
\[\Mono_a = \sum_{\substack{b\ge a \\ \flatten(b) = \flatten(a)}}x^b.\]
The fundamental slide polynomials will expand via the same rule into fundamental particles.

\begin{definition}\label{def:slidemoves}
Let $a$ be a weak composition. A \emph{slide} of $a$ is a weak composition that can be obtained from $a$ via a sequence of local moves of the form
\[\ldots 0k \ldots \rightarrow \ldots ij \ldots\]
where $i+j = k$ and $i,j\ge 0$. A \emph{fixed slide} of $a$ is a weak composition that can be obtained from $a$ via a sequence of the above local moves subject to an additional condition:  that if $k$ occupies a position that is nonzero in $a$, then $j>0$. Let $\Slide(a)$ denote the set of all slides of $a$ and $\FS(a)$ the set of all fixed slides of $a$.
\end{definition}

\begin{example}
Let $a=(0,3,0,2)$. Then
\begin{align}\nonumber
\Slide(a)  = & \{(0,3,0,2), (1,2,0,2), (2,1,0,2), (0,3,1,1), (1,2,1,1), (2,1,1,1), (3,0,0,2), (3,0,1,1), \\ \nonumber & (3,1,0,1), (0,3,2,0), (1,2,2,0), (2,1,2,0), (3,0,2,0), (3,1,1,0), (3,2,0,0)\} \\ \nonumber
\FS(a)  = & \{(0,3,0,2), (1,2,0,2), (2,1,0,2), (0,3,1,1), (1,2,1,1), (2,1,1,1)\}.
\end{align}
\end{example}

The following is proved in \cite[Proposition 2.16]{PS17}.

\begin{lemma}
Let $a$ be a weak composition. The fundamental slide polynomial $\Fund_a$ is the generating function of the slides of $a$, i.e.,
\[\Fund_a = \sum_{b\in \Slide(a)}x^b.\]
\end{lemma}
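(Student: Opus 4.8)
The plan is to prove $\Fund_a = \sum_{b \in \Slide(a)} x^b$ by showing that $\Slide(a)$ is exactly the set of weak compositions appearing in the monomial expansion of $\Fund_a$ as given in its definition, namely those $b$ with $b \geq a$ (dominance) and $\flatten(b)$ refining $\flatten(a)$. Since both sides are multiplicity-free sums of monomials (each slide move produces a distinct weak composition, and the defining sum has distinct exponents), it suffices to prove the \emph{set equality}
\[
\Slide(a) = \{b : b \geq a,\ \flatten(b)\ \mathrm{refines}\ \flatten(a)\}.
\]
I would establish this by two containments.

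First I would show $\Slide(a) \subseteq \{b : b \geq a,\ \flatten(b)\ \mathrm{refines}\ \flatten(a)\}$. It is enough to check that a single local move $\ldots 0k \ldots \to \ldots ij\ldots$ (with $i+j=k$, $i,j\geq 0$, and the $0$ immediately left of the $k$) preserves both conditions, and then induct on the number of moves. For dominance: the move redistributes mass from a right position to a left position within the affected window while leaving all other entries fixed, so every partial sum $b_1+\cdots+b_m$ weakly increases (strictly in the positions strictly between the old $0$ and old $k$, weakly elsewhere), giving $b \geq a$. For refinement: the move either splits one nonzero part of $\flatten(a)$ into two parts $i,j$ (when $i>0$) or leaves $\flatten$ unchanged (when $i=0$, merely moving the block left past zeros); either way $\flatten(b)$ refines $\flatten(a)$, and refinement is transitive, so it persists under composition of moves.

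For the reverse containment $\{b : b \geq a,\ \flatten(b)\ \mathrm{refines}\ \flatten(a)\} \subseteq \Slide(a)$, I would argue that any such $b$ can be reached from $a$ by a sequence of the local moves. The key idea is a descent/induction argument: given $b \neq a$ with $b \geq a$ and $\flatten(b)$ refining $\flatten(a)$, I would locate a local move transforming $a$ into some intermediate $a'$ with $a \leq a' \leq b$ and $\flatten(a')$ refining $\flatten(a)$ (and $\flatten(b)$ still refining $\flatten(a')$), then induct on a suitable statistic such as $\sum_i i\cdot(b_i - a_i)$ or the number of leading zeros consumed, which strictly decreases. Concretely, one compares the leftmost position where $a$ and $b$ differ and uses the dominance and refinement hypotheses to produce the appropriate $0k \to ij$ move bringing $a$ strictly closer to $b$.

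The main obstacle I anticipate is the reverse containment, specifically verifying that the chosen intermediate move both respects dominance toward $b$ \emph{and} keeps the refinement relation intact on both sides simultaneously --- it is easy to move mass leftward in a way that overshoots dominance or breaks the refinement of $\flatten(a)$. The careful bookkeeping of which nonzero block of $a$ is being split or slid, and confirming the monovariant strictly decreases at each step so the induction terminates, is where the real content lies; the forward containment and the multiplicity-free observation are routine by comparison. Alternatively, since this lemma is cited as \cite[Proposition 2.16]{PS17}, one could simply reconcile the two descriptions of $\Fund_a$ by the set equality above and defer the detailed induction to that reference.
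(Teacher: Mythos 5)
The paper gives no proof of this lemma at all --- it simply cites \cite[Proposition 2.16]{PS17} --- so your proposal does strictly more than the source text. Your reduction to the set equality $\Slide(a)=\{b: b\ge a,\ \flatten(b)\ \mathrm{refines}\ \flatten(a)\}$ and your forward containment are correct and complete: a single move $0k\to ij$ leaves every partial sum unchanged except at the position receiving $i$, where it weakly increases, and it either splits one part of $\flatten(a)$ into $(i,j)$ or leaves $\flatten$ unchanged; both dominance and refinement are transitive. For the reverse containment your outline is sound, but one detail is off: the required move is generally \emph{not} adjacent to the leftmost position $p$ where $a$ and $b$ differ. That position only identifies \emph{which} nonzero part $a_{n_j}$ of $a$ must be dealt with (the one whose corresponding block of nonzero entries of $b$ begins at $p$); the move itself is applied at positions $(n_j-1,n_j)$, possibly far to the right of $p$. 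A cleaner organization of this direction --- and the one implicitly modelled by the paper's own proof of Proposition~\ref{prop:fundtoparticle}, the analogous statement for fixed slides --- is a direct construction: use the refinement to group the nonzero entries of $b$ into consecutive blocks summing to the parts of $\flatten(a)$, use dominance to show the last entry of the $j$th block sits in a position at most $n_j$, and then process the parts of $a$ from left to right, sliding each $a_{n_j}$ leftward via moves $0k\to k0$ and splitting it via moves $0k\to ij$ to realize its block. This sidesteps the delicate single-step argument entirely, although your monovariant $-\sum_i i(b_i-a_i)=\sum_m\bigl((b_1+\cdots+b_m)-(a_1+\cdots+a_m)\bigr)$ would also serve as a termination measure if you prefer the inductive route.
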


\begin{definition}
Given a weak composition $a$, the fundamental particle $\Part_a$ is the generating function of the fixed slides of $a$, i.e.,
\[\Part_a = \sum_{b\in\FS(a)}x^b.\]
\end{definition}

\begin{example}
$\Part_{0302} = x^{0302} + x^{1202} + x^{2102} + x^{0311} + x^{1211} + x^{2111}.$
\end{example}

\begin{proposition}
$\{\Part_a\}$ is a basis for the polynomial ring.
\end{proposition}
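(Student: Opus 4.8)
The plan is to show that the fundamental particles $\{\Part_a\}$, indexed by weak compositions, both span the polynomial ring and are linearly independent; since they are indexed by the same set as the monomials $\{x^a\}$, it suffices to establish a triangularity relationship between $\{\Part_a\}$ and $\{x^a\}$ with respect to a suitable partial order on weak compositions. The natural candidate is the dominance order $\geq$ introduced earlier in the excerpt.

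First I would examine the expansion $\Part_a = \sum_{b\in\FS(a)} x^b$ directly from the definition of fixed slides. Each local move $\ldots 0k\ldots \to \ldots ij\ldots$ with $i+j=k$ moves weight \emph{leftward}, so every $b\in\FS(a)$ satisfies $b\geq a$ in dominance order; moreover the empty sequence of moves is always allowed, so $a\in\FS(a)$ itself. Thus $a$ is the unique dominance-minimal element of $\FS(a)$, and we obtain the unitriangular expansion
\[
\Part_a = x^a + \sum_{\substack{b > a}} d_{a}^{b}\, x^b
\]
for nonnegative integer coefficients $d_a^b$, where the sum is over weak compositions $b$ strictly dominating $a$ (in fact with $\flatten(b)=\flatten(a)$, but this refinement is not needed). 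I would verify carefully that the leftward-shifting property of slides indeed forces $b\geq a$: each local move replaces a pair $(0,k)$ in positions $(p,p+1)$ by $(i,j)$, which only increases the partial sums $b_1+\cdots+b_m$ for $m=p,\ldots$ while leaving all others unchanged.

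With unitriangularity in hand, the conclusion is immediate. Since dominance order on weak compositions of a fixed length is a partial order and the transition matrix from $\{\Part_a\}$ to $\{x^a\}$ is triangular with $1$'s on the diagonal (grouping by degree, where each graded piece is finite-dimensional), the matrix is invertible. Hence the $\{\Part_a\}$ are linearly independent and span the same space as the monomials, so they form a basis.

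The only step requiring genuine care is confirming the dominance-triangularity, i.e. that every fixed slide of $a$ dominates $a$ and that $a$ occurs with coefficient exactly $1$; everything after that is the standard unitriangularity argument. I expect this to be the main (though mild) obstacle, since it requires unwinding the recursive definition of $\FS(a)$ and checking that no sequence of local moves can return a weak composition equal to $a$ other than the empty sequence — which follows because each nontrivial move strictly increases some partial sum and no move ever decreases a partial sum, so the moves cannot cancel.
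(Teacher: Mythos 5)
Your proposal is correct and follows essentially the same route as the paper, which simply observes that the leading monomial of $\Part_a$ in dominance order is $x^a$ and invokes triangularity against the monomial basis. You have merely spelled out the details of why every fixed slide $b$ of $a$ satisfies $b\geq a$ with $a$ occurring exactly once, which the paper leaves implicit.
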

\begin{proof}
The leading monomial of $\Part_a$ with respect to dominance order is $x^a$, so we have a triangularity with the basis of (ordinary) monomials.
\end{proof}

\subsection{Positivity}

Fundamental slide polynomials expand positively in the fundamental particle basis:

\begin{proposition}\label{prop:fundtoparticle}
Let $a$ be a weak composition. Then
\[\Fund_a = \sum_{\stackrel{b\ge a}{\flatten(b)=\flatten(a)}}\Part_b.\]
\end{proposition}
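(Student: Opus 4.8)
The plan is to prove the identity by decomposing the set $\Slide(a)$ of slides of $a$ into disjoint pieces indexed by the weak compositions $b$ satisfying $b \ge a$ and $\flatten(b) = \flatten(a)$, where the piece associated to $b$ is exactly $\FS(b)$. Since the lemma preceding this proposition gives $\Fund_a = \sum_{c \in \Slide(a)} x^c$ and the definition of the fundamental particle gives $\Part_b = \sum_{c \in \FS(b)} x^c$, the identity $\Fund_a = \sum_{b \ge a,\,\flatten(b)=\flatten(a)} \Part_b$ is equivalent to the set-theoretic decomposition
\[
\Slide(a) = \bigsqcup_{\substack{b \ge a \\ \flatten(b) = \flatten(a)}} \FS(b).
\]
So the whole proof reduces to establishing that this is a genuine partition: every slide of $a$ lies in $\FS(b)$ for exactly one such $b$.

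First I would observe the two easy containments. On one hand, each $b$ with $b \ge a$ and $\flatten(b)=\flatten(a)$ satisfies $b \in \Slide(a)$ (it is obtained from $a$ by the local moves that shift mass leftward into zero positions without changing the nonzero pattern), and every fixed slide of $b$ is a fortiori a slide of $b$, hence a slide of $a$; this gives $\FS(b) \subseteq \Slide(a)$ and shows the right-hand side is contained in the left. The reverse containment is the crux: given any $c \in \Slide(a)$, I must produce a canonical $b$ with $b \ge a$, $\flatten(b) = \flatten(a)$, and $c \in \FS(b)$. The natural candidate for $b$ is obtained from $c$ by ``undoing'' the splitting moves only at positions that were originally zero in $a$ — that is, I would re-coalesce the mass back into the positions dictated by $\flatten(a)$, leaving $c$ realizable as a fixed slide of $b$. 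Concretely, $b$ records where the nonzero parts of $a$ have migrated under the slide producing $c$, so that $\flatten(b) = \flatten(a)$, while the further fragmentation of those parts into adjacent positions is precisely what is permitted by the fixed-slide condition (which allows splitting off mass to the left only when $j > 0$, i.e., without fully vacating an originally-occupied position).

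The main obstacle I expect is verifying \emph{uniqueness} of this $b$, and equivalently that the sets $\FS(b)$ are genuinely disjoint as $b$ ranges over the indexing set. This requires a careful bookkeeping argument: I would show that from $c \in \FS(b)$ one can recover $b$ unambiguously by reconstructing, for each nonzero part of $\flatten(a)$, the unique position it occupies in $b$ — using the fixed-slide constraint to pin down which position is the ``anchor'' that cannot be vacated. The distinction between $\Slide$ and $\FS$ lives entirely in whether originally-nonzero positions are allowed to become zero, and the dominance condition $b \ge a$ together with $\flatten(b) = \flatten(a)$ forces a unique leftmost-admissible coalescence. I would make this precise by inducting on the number of local moves, tracking at each step which positions are ``protected'' (nonzero in the ancestor) versus freely splittable, and confirming the decomposition is preserved. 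Once uniqueness and the two containments are in hand, summing the monomial generating functions over the partition yields the stated positive expansion immediately.
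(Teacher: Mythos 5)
Your proposal is correct and follows essentially the same route as the paper: both reduce the identity to the set-theoretic partition $\Slide(a)=\bigsqcup_{b\ge a,\ \flatten(b)=\flatten(a)}\FS(b)$, construct $b$ from a given slide $c$ by recording where the nonzero parts of $a$ have migrated, and prove disjointness by using the fixed-slide constraint (an originally nonzero position of $b$ cannot be vacated) to pin down $b$ uniquely. The paper carries out your "bookkeeping" step directly, by comparing the nonzero positions of two candidates $b\neq c$ and deriving that some $f_{m_j}=0$ where $c_{m_j}\neq 0$, contradicting $f\in\FS(c)$.
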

\begin{proof}
Let $f\in \Slide(a)$. We first want to show that $f\in \FS(b)$ for some $b\ge a$ with $\flatten(b)=\flatten(a)$. Suppose $a$ has nonzero entries in positions $n_1 < n_2 < \cdots < n_k$. Then by definition, there are positions $i_j<n_j$ such that $a_{n_1} = f_1 + \cdots + f_{i_1}$, $a_{n_2} = f_{i_1+1} + \cdots + f_{i_2}$, etc. Perform local moves of Definition~\ref{def:slidemoves} on $a$ that move the nonzero entries of $a$ from position $n_j$ to position $i_j$ for each $1 \le j \le k$, in order. Let $b$ be the weak composition thus obtained; clearly $b\ge a$ and $\flatten(b)=\flatten(a)$. Then we may perform further local moves on $b$ to obtain $f$, and all these moves satisfy the condition for fixed slides. Hence $f\in \FS(b)$, establishing that every term on the left hand side also appears on the right hand side.

To show that every term on the right hand side also appears on the left hand side, first observe that if $f\in \FS(b)$ where $b\ge a$ and $\flatten(b)=\flatten(a)$, then clearly $f\in \Slide(a)$. It remains to show that if $f\in \Slide(a)$ then there is no more than one $b$ such that $\flatten(b) = \flatten(a)$ and $f\in \FS(b)$. Suppose for a contradiction that $f\in \FS(b)$ and $f\in \FS(c)$, for some $b\neq c$ where $\flatten(c)=\flatten(b)$. Suppose $b$ has nonzero entries in positions $n_1 < n_2 < \cdots < n_\ell$ and $c$ has nonzero entries in positions $m_1 < m_2 < \cdots < m_\ell$.  
Let $j$ be the smallest index such that $n_j \neq m_j$; without loss of generality assume $n_j<m_j$. Then we have
\[f_{n_{j-1}+1} + \cdots + f_{n_j} = b_{n_j} = c_{m_j} = f_{m_{j-1}+1} + \cdots + f_{m_j},\]
where the middle equality is due to $\flatten(b) = \flatten(c)$. Since by assumption $n_{j-1} = m_{j-1}$, by subtracting the left hand side from the right hand side we obtain
\[0 = f_{n_j+1} + \cdots + f_{m_j}.\]
This implies in particular that $f_{m_j} = 0$. Since $c_{m_j} \neq 0$, this contradicts $f\in \FS(c)$.
\end{proof}

\begin{example}
$\Fund_{0302} = \Part_{0302} + \Part_{3002} + \Part_{0320} + \Part_{3020} + \Part_{3200}.$
\end{example}

We now prove Theorem A for bases incomparable in $\mathcal{P}$.

\begin{proposition}\label{prop:nonpositive}
For the three pairs of bases $(\{A_a\}, \{\Mono_a\}), (\{A_a\}, \{\Fund_a\}), (\{\Part_a\}, \{\Mono_a\})$, neither basis in the pair expands positively in the other.
\end{proposition}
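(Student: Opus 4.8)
The plan is to establish each non-positivity by exhibiting a single explicit negative structure constant. Each of the six bases is obtained from the monomial basis by a change of basis that is unitriangular with respect to dominance order, with $x^a$ the dominance-minimal monomial of $f_a$; hence the transition matrix between any two of these bases is itself unitriangular, its integer structure constants are uniquely determined, and a single negative coefficient in an expansion rules out positivity of that expansion. So for each of the three incomparable pairs I would simply produce, in each direction, one basis element whose expansion in the other basis has a negative coefficient.

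First I would dispose of the two $\Mono$-pairs at once, in two variables and degree two. Using the combinatorial descriptions already recorded — $\ASSF$ for $\atom$, dominance classes for $\Mono$, and fixed slides for $\Part$ — one computes $\atom_{(0,2)} = \Part_{(0,2)} = x_2^2 + x_1x_2$, while $\Mono_{(0,2)} = x_1^2 + x_2^2$, $\Mono_{(2,0)} = \atom_{(2,0)} = \Part_{(2,0)} = x_1^2$, and $\Mono_{(1,1)} = \atom_{(1,1)} = \Part_{(1,1)} = x_1x_2$. This yields
\[\atom_{(0,2)} = \Part_{(0,2)} = \Mono_{(0,2)} - \Mono_{(2,0)} + \Mono_{(1,1)}, \qquad \Mono_{(0,2)} = \atom_{(2,0)} + \atom_{(0,2)} - \atom_{(1,1)},\]
where the same identity holds with $\Part$ in place of $\atom$ throughout. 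The displayed negative coefficients settle both directions of $(\{\atom_a\},\{\Mono_a\})$ and of $(\{\Part_a\},\{\Mono_a\})$ simultaneously.

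For $(\{\atom_a\},\{\Fund_a\})$ the direction ``$\atom$ not positive in $\Fund$'' is equally cheap: the same element gives $\atom_{(0,2)} = \Fund_{(0,2)} - \Fund_{(2,0)}$, since $\Fund_{(0,2)} = x_1^2 + x_2^2 + x_1x_2$ and $\Fund_{(2,0)} = x_1^2$. The reverse direction, ``$\Fund$ not positive in $\atom$,'' is the main obstacle of the whole proposition, because low-degree examples are misleadingly positive: every expansion in two variables, and every expansion of degree at most three, comes out positive, so the obstruction first surfaces at degree four. The mechanism I would exploit is that an atom $\atom_b$ can contain a monomial $x^c$ whose flattening $\flatten(c)$ fails to refine $\flatten(b)$, and such a monomial need not appear in any fundamental slide polynomial in play. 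Concretely I would take $a = (0,2,2)$, so that $\flatten(a) = (2,2)$ and $\Fund_{(0,2,2)}$ contains only monomials $x^b$ with $\flatten(b)$ refining $(2,2)$; in particular $x_1x_2^2x_3$ (exponent $(1,2,1)$, whose flattening $(1,2,1)$ does \emph{not} refine $(2,2)$) is absent. Evaluating the triple conditions on the two-row diagram $D(0,2,2)$ nonetheless gives $\atom_{(0,2,2)} = x_2^2x_3^2 + x_1x_2x_3^2 + x_1x_2^2x_3$, which \emph{does} contain $x_1x_2^2x_3$. Peeling atoms off $\Fund_{(0,2,2)}$ in increasing dominance order then introduces this stray monomial, which can only be cancelled by a negative multiple of $\atom_{(1,2,1)}$, and tracking the recursion pins the coefficient of $\atom_{(1,2,1)}$ at $-1$.

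I expect the only genuine computation to be the degree-four verification in the last paragraph: one must check the Type A triple at $D(0,2,2)$ carefully to confirm that the spurious monomial $x_1x_2^2x_3$ really occurs in $\atom_{(0,2,2)}$, and then confirm, using dominance-unitriangularity, that no atom with a smaller leading term reintroduces it with the opposite sign (here $(1,2,1)$ is dominance-minimal among the monomials surviving after $\atom_{(0,2,2)}$ is removed, so its coefficient is forced). The other five structure constants are immediate two-variable calculations. With all six negative coefficients in hand the three incomparable pairs are resolved, and this, together with the positive expansions established earlier in the poset, completes the proof of Theorem A.
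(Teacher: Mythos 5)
Your proposal is correct and follows essentially the same strategy as the paper: exhibit one explicit negative structure constant in each of the six directions (the paper's list is $\atom_{01}=\Mono_{01}-\Mono_{10}$, $\Mono_{02}=\atom_{02}+\atom_{20}-\atom_{11}$, $\atom_{01}=\Fund_{01}-\Fund_{10}$, $\Fund_{13}=\atom_{13}-\atom_{22}$, and the analogous $\Part$/$\Mono$ identities). The only flaw is your side remark that every two-variable expansion of a fundamental slide polynomial into Demazure atoms is positive: the paper's counterexample for that direction is precisely the two-variable, degree-four identity $\Fund_{13}=\atom_{13}-\atom_{22}$, which is simpler than your three-variable computation with $\Fund_{022}$ (though your example and the forcing argument via dominance-minimality of $(1,2,1)$ are also valid).
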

\begin{proof}
Observe that
\begin{align*}
\atom_{01} & =  \Mono_{01} - \Mono_{10} & \Mono_{02} & =  \atom_{02} + \atom_{20} - \atom_{11} \\
\atom_{01} & =  \Fund_{01} - \Fund_{10} &  \Fund_{13} & =  \atom_{13} - \atom_{22} \\
\Part_{01}  & =   \Mono_{01} - \Mono_{10}  & \Mono_{02} & =  \Part_{02} + \Part_{20} - \Part_{11}.
\end{align*}
\end{proof}

Completing the proof of Theorem C, we now show fundamental particles can be described in terms of semi-skyline fillings. 

\begin{definition}
Given a weak composition $a$, let $\LSSF(a)$ be the set of all semi-skyline fillings of $D(a)$ such that the first column entries are equal to their row index, and if a box $B$ is in a higher row than a box $B'$, then the entry of $B$ is strictly larger than the entry of $B'$. 
\end{definition}

Notice that these conditions force all triples to be inversion triples, as in Remark~\ref{rmk:fundtriple}.

\begin{proposition}
The fundamental particle $\Part_a$ has the following expansion:
\[\Part_a = \sum_{T\in \LSSF(a)}x^{\wt(T)}.\]
\end{proposition}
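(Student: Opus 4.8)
The plan is to establish a weight-preserving bijection between the fixed slides $\FS(a)$ and the fillings in $\LSSF(a)$, which immediately yields equality of the two generating functions. First I would understand precisely what an element $T \in \LSSF(a)$ looks like. Since the first column entries equal their row index and entries in higher rows are strictly larger than entries in lower rows, the row index is entirely determined by the entry in each row: a box with entry $r$ in its first column sits in row $r$. Because entries weakly decrease along rows, every entry in a row whose first-column entry is $r$ is at most $r$; combined with the strict-increase-between-rows condition, a row lying at index $r$ has all its entries equal to $r$ except possibly smaller entries further right. I would make this structural description explicit, observing that the triple conditions are automatic (as noted) so the only real constraints are the row-weak-decrease, the column-distinctness, and the strict domination between rows.

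Next I would define the map. Given $T \in \LSSF(a)$, I read off its weight $\wt(T)$, a weak composition, and claim $\wt(T) \in \FS(a)$. Conversely, given a fixed slide $f \in \FS(a)$, I would construct the unique filling of $D(f)$ realizing it. The key conceptual point connecting the two sides is that the local move $\ldots 0k \ldots \to \ldots ij \ldots$ on compositions corresponds to splitting a row of length $k$ (all entries the same value, say the original row index) into a contribution of $i$ boxes that slide down to a lower row and $j$ boxes that remain, with the ``remain'' piece keeping the larger label. The \emph{fixed} condition that $j>0$ whenever $k$ occupies a position nonzero in $a$ corresponds exactly to the requirement that a row originally present in $D(a)$ cannot be entirely vacated — which in skyline-filling language is enforced by the first-column entry equalling the row index, i.e. that row $r$ is nonempty precisely when it should be.

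The cleanest route is probably to compare $\LSSF(a)$ with $\FSSF(a)$ from Proposition~\ref{prop:fundamentalskyline} and exploit Proposition~\ref{prop:fundtoparticle}. Since $\Fund_a = \sum_{b\ge a,\ \flatten(b)=\flatten(a)} \Part_b$, and since $\FSSF$ fillings differ from $\LSSF$ fillings only in that the first-column entries are allowed to be at most their row index (rather than forced equal to it), I would show that partitioning $\FSSF(a)$ according to the ``target'' composition $b$ recorded by the first-column entries exactly matches the decomposition $\Fund_a = \sum_b \Part_b$. Concretely, moving each row of an $\FSSF$ filling to the row indexed by its first-column entry sends $\FSSF(a)$ bijectively and weight-preservingly onto $\bigcup_{b\ge a,\ \flatten(b)=\flatten(a)} \LSSF(b)$; comparing with Propositions~\ref{prop:fundamentalskyline} and~\ref{prop:fundtoparticle} and invoking that $\{\Part_a\}$ is a basis then forces the stated identity term by term.

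The main obstacle I anticipate is verifying that this row-relocation map is well-defined in both directions: I must check that relocating rows of an $\FSSF$ filling to the rows dictated by their first-column entries produces a valid $\LSSF$ of the appropriate shape (in particular that the resulting shape $b$ genuinely satisfies $b \ge a$ and $\flatten(b)=\flatten(a)$, and that column-distinctness and the strict row-domination survive relocation), and conversely that reassembling an $\LSSF$ into an $\FSSF$ of shape $a$ is unique. The dominance inequality $b \ge a$ should follow because relocation only moves rows upward to their label-indexed positions, never past each other, but I would confirm carefully that no two rows collide and that the relative vertical order of rows is preserved, so that the argument matches the uniqueness already proved in Proposition~\ref{prop:fundtoparticle}.
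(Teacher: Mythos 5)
Your preferred route is genuinely different from the paper's. The paper proves this directly: it observes that $\wt(T)$ for $T\in\LSSF(a)$ is by definition a fixed slide of $a$, and conversely, for each $b\in\FS(a)$ it explicitly constructs the unique $T\in\LSSF(a)$ of weight $b$ by filling each nonzero row $n_j$ of $D(a)$ from the right with $b_{n_{j-1}+1}$ copies of $n_{j-1}+1$, then $b_{n_{j-1}+2}$ copies of $n_{j-1}+2$, and so on, ending with $b_{n_j}>0$ copies of $n_j$ in the leftmost boxes (which is exactly where the fixed-slide condition $j>0$ is used, as you correctly identify). Your first sketch is essentially this argument --- note only that the filling lives on $D(a)$, not $D(f)$; the weight is $f$ but the shape is always $a$. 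Your second, ``cleanest'' route instead deduces the identity from Propositions~\ref{prop:fundamentalskyline} and~\ref{prop:fundtoparticle} via the row-relocation bijection $\FSSF(a)\leftrightarrow\bigcup_{b\ge a,\,\flatten(b)=\flatten(a)}\LSSF(b)$. That bijection is sound (first-column entries of an $\FSSF$ strictly increase up the diagram, so rows relocate to distinct lower positions without collision, and $b\ge a$ with $\flatten(b)=\flatten(a)$ follows), and it buys a pleasant structural parallel with Lemma~\ref{lem:columnqkey}; the cost is that it is indirect and leans on two prior results where the paper's argument is self-contained.

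One step of the indirect route needs repair: the conclusion ``invoking that $\{\Part_a\}$ is a basis then forces the stated identity term by term'' is not the right justification. Setting $g_b=\sum_{T\in\LSSF(b)}x^{\wt(T)}$, what you actually obtain is $\sum_{b\ge a,\,\flatten(b)=\flatten(a)}\Part_b=\sum_{b\ge a,\,\flatten(b)=\flatten(a)}g_b$ for every $a$; since you do not know a priori how the $g_b$ expand in the $\Part$ basis, linear independence of $\{\Part_a\}$ alone does not let you equate summands. What does work is that this system of equations, indexed by the weak compositions with a fixed flattening, is unitriangular with respect to dominance order: for the dominance-maximal such $a$ the sum is a single term, giving $\Part_a=g_a$, and descending induction on dominance then yields $\Part_b=g_b$ for all $b$. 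With that substitution the argument is complete.
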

\begin{proof}
By definition, the weight $\wt(T)$ of $T\in \LSSF(a)$ is a fixed slide of $a$. Conversely, for every $b\in \FS(a)$, one can associate a unique $T\in \LSSF(a)$ as follows. Suppose $a$ has nonzero entries in positions $n_1, \ldots , n_k$, and set $n_0=0$. Then for each $1\le j \le k$, we have $b_{n_{j-1}+1} + \cdots + b_{n_j} = a_{n_j}$. For each $1\le j \le k$, fill the rightmost $b_{n_{j-1}+1}$  boxes of row $n_j$ of $D(a)$ with entry $n_{j-1}+1$, then, proceeding leftward, the next $b_{n_{j-1}+2}$ boxes with entry $n_{j-1}+2$, etc, finishing with the leftmost $b_{n_j}$ boxes being filled with entry $n_j$. 
\end{proof}

The following is immediate from the definitions:

\begin{proposition}
The semi-skyline fillings of $D(a)$ that generate the fundamental particle $\Part_a$ are the intersection of those that generate the Demazure atom $\atom_a$ and those that generate the fundamental slide $\Fund_a$, i.e.,
\[\LSSF(a) = \ASSF(a)\cap\FSSF(a).\]
\end{proposition}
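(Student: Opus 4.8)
The plan is to prove the set equality $\LSSF(a)=\ASSF(a)\cap\FSSF(a)$ by directly comparing the conditions defining the three families. All three are by definition subsets of $\SSF(a)$, so every filling under consideration already satisfies the baseline semi-skyline requirements (weakly decreasing rows, distinct column entries, all triples inversion triples); thus the comparison reduces to the \emph{additional} constraints each family imposes. First I would record these: $\ASSF(a)$ adds only the condition that each first-column entry equal its row index; $\FSSF(a)$ adds the two conditions that each first-column entry be at most its row index and that a box in a strictly higher row carry a strictly larger entry than any box below it; and $\LSSF(a)$ adds exactly the first-column-equals-row-index condition together with the higher-row-strictly-larger condition.

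With these lists in hand, the two inclusions are immediate. For $\LSSF(a)\subseteq\ASSF(a)\cap\FSSF(a)$, a filling in $\LSSF(a)$ satisfies the $\ASSF(a)$ condition verbatim, and it satisfies both $\FSSF(a)$ conditions, since the higher-row-strictly-larger condition is shared and since ``first-column entry equals its row index'' trivially implies ``first-column entry is at most its row index.'' For the reverse inclusion, a filling in $\ASSF(a)\cap\FSSF(a)$ has first-column entries equal to their row index (coming from $\ASSF(a)$) and satisfies the higher-row-strictly-larger condition (coming from $\FSSF(a)$); these are precisely the two defining conditions of $\LSSF(a)$.

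The only point meriting any attention---and it is a minor bookkeeping remark rather than a genuine obstacle---is that the $\FSSF(a)$ inequality on first-column entries is subsumed by the $\ASSF(a)$ equality, so intersecting adds no constraint beyond those already present in $\LSSF(a)$ while discarding nothing. I therefore expect no real difficulty: the proposition is a direct consequence of unwinding the definitions, matching the paper's assertion that it is immediate.
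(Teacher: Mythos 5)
Your proposal is correct and matches the paper's treatment: the paper states this proposition as ``immediate from the definitions,'' and your unwinding of the three sets' defining conditions (with the observation that the $\FSSF$ inequality on first-column entries is subsumed by the $\ASSF$ equality) is exactly the intended verification.
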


Demazure atoms also expand positively into fundamental particles. To give this expansion, we introduce a distinguished subset of $\ASSF(a)$. 

\begin{definition} A semi-skyline filling $T\in \ASSF(a)$ is \emph{particle-highest} if for every $i$ that appears as an entry in $T$, either 
\begin{itemize}
\item the leftmost $i$ is in the first column, or 
\item there is an $i^\uparrow$ in some column weakly to the right of the leftmost $i$, where $i^\uparrow$ is the smallest label greater than $i$ appearing in $T$. 
\end{itemize}
Let $\HSSF(a)$ denote the set of particle-highest elements of $\ASSF(a)$.
\end{definition}

\begin{example}
Only the first and third $\ASSF$s in Figure~\ref{fig:atom0103} are in $\HSSF(0,1,0,3)$.
\end{example}

In order to give a formula for the fundamental particle expansion of a Demazure atom, we now define a \emph{destandardization} operation $\destand$ on $\ASSF(a)$. For each entry $i$ that appears in $T\in \ASSF(a)$, if the leftmost $i$ is not in the first column, and it has no $i^\uparrow$ weakly to its right, then replace every $i$ in $T$ with an $i+1$. Repeat until no further changes can be made: the result is $\destand(T)$. This process terminates since it increases entries, but the entry of each box is bounded above by its row index.

\begin{lemma}\label{lem:dst}
If $T\in \ASSF(a)$ then $\destand(T)\in \HSSF(a)$, and $\destand(T)=T$ if and only if $T\in \HSSF(a)$. 
\end{lemma}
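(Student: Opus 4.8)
The plan is to analyze the destandardization operation $\destand$ directly, showing both that it lands in $\HSSF(a)$ and that it fixes precisely the elements of $\HSSF(a)$. The definition of $\destand$ repeatedly performs a single type of replacement: find an entry $i$ whose leftmost occurrence is not in the first column and which has no $i^\uparrow$ (the smallest label exceeding $i$ in $T$) weakly to its right, and replace every $i$ with $i+1$. The key structural fact I would extract first is that each such replacement is \emph{well-defined as a map back into $\ASSF(a)$}: replacing all $i$'s by $i+1$ must preserve weak decrease along rows, column-distinctness, first-column entries equalling their row index, and the inversion-triple conditions. I expect this to be the main obstacle, since it is the one place where the combinatorial constraints could genuinely fail, so I would verify it carefully as a preliminary claim.

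\emph{First}, I would establish this preliminary claim. The hypothesis is that $i$ has no $i^\uparrow$ weakly to its right, where $i^\uparrow$ is the smallest label $>i$ occurring anywhere in $T$. The crucial observation is that if we change an $i$ to $i+1$, a conflict could only arise with an existing $i+1$ in the same column or with row-monotonicity; but the absence of any $i^\uparrow$ weakly right of the leftmost $i$ should be used to rule these out. In particular, since $i^\uparrow$ is the \emph{smallest} label exceeding $i$, if $i+1$ itself appears in $T$ then $i^\uparrow = i+1$, and its absence weakly to the right of the leftmost $i$ constrains where $i+1$'s can sit relative to the $i$'s being promoted; I would argue that each $i$ being promoted has no $i+1$ directly above it in its column (else that $i+1$ would be an $i^\uparrow$ weakly to the right) and that promoting cannot break weak row-decrease because an $i$ never has $i+1$ to its left in the same row. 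The triple conditions are preserved by a parallel argument using Figure~\ref{fig:triple}. I would also note the leftmost-$i$-not-in-first-column hypothesis guarantees the first-column condition is undisturbed.

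\emph{Second}, granting the preliminary claim, each individual replacement keeps us in $\ASSF(a)$, and the termination argument in the text (entries increase but are bounded by row index) shows $\destand(T)$ is well-defined and lies in $\ASSF(a)$. \emph{Third}, I would show $\destand(T)\in\HSSF(a)$: by construction the process halts exactly when no entry $i$ admits a legal promotion, i.e.\ when every entry $i$ either has its leftmost occurrence in the first column or has an $i^\uparrow$ weakly to its right — which is precisely the particle-highest condition. Here I must be slightly careful that promoting $i$ to $i+1$ does not create a \emph{new} violating entry at some value $i'<i$ already processed; I would argue that promotions can be organized from largest value downward, or observe that promoting $i$ only increases labels, so it can only help (never hinder) the particle-highest condition at smaller values by supplying $i^\uparrow$ witnesses, ensuring the terminal object satisfies the condition at every value simultaneously.

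\emph{Finally}, the equivalence $\destand(T)=T\iff T\in\HSSF(a)$ follows from the same halting analysis: if $T\in\HSSF(a)$ then no entry admits a promotion, so $\destand$ makes no changes and $\destand(T)=T$; conversely if $\destand(T)=T$ then no promotion was ever applied, which (by the terminal characterization) means $T$ already satisfies the particle-highest condition, so $T\in\HSSF(a)$. The bulk of the genuine work is the preliminary claim that a single promotion preserves membership in $\ASSF(a)$; once that is secured, the rest is bookkeeping about when the procedure terminates.
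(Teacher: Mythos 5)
Your proposal follows essentially the same route as the paper's proof: verify that a single promotion of all $i$'s to $i+1$'s preserves each $\ASSF(a)$ condition (using that no $i^\uparrow$ lies weakly right of the leftmost $i$, hence no column contains both $i$ and $i^\uparrow$ and no $i$ has an $i^\uparrow$ to its right), then obtain termination from the row-index bound and read off the two-way implication from the definition of the particle-highest condition. The only place you stop short is the inversion-triple check, which you assert ``by a parallel argument'' while the paper carries out a four-case analysis (Type A and Type B triples, with the distinguished entry smallest or largest), each case reducing to one of the two key facts you already isolated --- so nothing is missing in substance, only in execution.
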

\begin{proof}
We must show that applying $\destand$ preserves the $\ASSF$ conditions. Suppose we apply one step of destandardization to entry $i$. 

We apply $\destand$ to $i$ only if there is no $i^\uparrow$ in a column weakly right of the leftmost $i$. Hence no $i$ has a $i^\uparrow$ in its column, so in particular there is no column containing both $i$ and $i+1$. Thus we never introduce a repeat of any entry in any column.

If row $i$ contains an entry $i$, then since rows weakly decrease and row entries do not exceed their row index, this row has an $i$ in the first column, so the leftmost $i$ is in the first column. Therefore we may assume all entries $i$ in $T$ are in rows with indices strictly greater than $i$. Moreover, the label immediately left of the leftmost $i$ in each row is strictly greater than $i$, hence replacing all $i$'s with $i+1$'s preserves weakly decreasing rows, and the entry of each box remains weakly smaller than its row index. 

To see no type A coinversion triples are introduced, suppose we have a type $A$ inversion triple with top entry strictly smallest. This could become a coinversion triple under this process only if the top entry is $i$ and the entry below in the same column is $i+1=i^\uparrow$, but then we have an $i$ and $i+1$ in the same column and $\destand$ does not apply to $i$. Now suppose we have a type $A$ inversion triple with top entry strictly largest. This could become become a coinversion triple under this process only if the top entry is $i+1=i^\uparrow$ and the bottom-left entry is $i$. But then there is an $i+1$ to the right of an $i$, so again $\destand$ does not apply to $i$.

To see no type B coinversion triples are introduced, suppose we have a type $B$ inversion triple with bottom entry strictly smallest. This could become a coinversion triple under this process only if the bottom entry is $i$ and the top-right entry is $i+1=i^\uparrow$, but then there is an $i+1$ to the right of an $i$, and $\destand$ does not apply to $i$. Now suppose we have a type $B$ inversion triple with bottom entry strictly largest. This could become a coinversion triple under this process only if the bottom entry is $i+1=i^\uparrow$ and the top entry in the same column is $i$, but then we have an $i$ and $i+1$ in the same column and $\destand$ does not apply to $i$.

Thus $\destand(T)$ is an $\ASSF$. The fact that $\destand(T)\in \HSSF(a)$ and that $\destand(T)=T$ if and only if $T\in \HSSF(a)$ now follows immediately from the definitions.
\end{proof}

\begin{example}
In Figure~\ref{fig:atomdst}, the leftmost $\ASSF$ is not particle-highest and destandardizes to the particle-highest $\ASSF$ in the middle, while the rightmost $\ASSF$ is particle-highest (and destandardizes to itself).
\end{example}

\begin{figure}[ht]
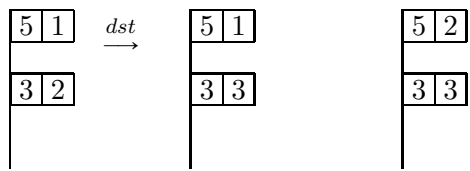

  \begin{center}
    \begin{displaymath}
      \begin{array}{c@{\hskip\cellsize}c@{\hskip\cellsize}c}
        \vline\tableau{ 5 & 1 \\ \\ 3 & 2 \\ \\ \\ } \,\,\,\,\,\, \substack{dst \\ \longrightarrow} \,\,\,\, &
        \vline\tableau{ 5 & 1 \\ \\ 3 & 3  \\ \\ \\ } \qquad \qquad &
        \vline\tableau{ 5 & 2 \\ \\ 3 & 3 \\ \\ \\ } 
      \end{array}
    \end{displaymath}
    \caption{\label{fig:atomdst} Three elements of $\ASSF(00202)$.}
  \end{center}
\end{figure}

We may now give a positive combinatorial formula for the expansion of a Demazure atom into fundamental particles, completing the proof of Theorem A.

\begin{theorem}\label{thm:atomtoparticle} Let $a$ be a weak composition of length $n$. Then
\[\atom_a = \sum_{S\in \HSSF(a)}\Part_{\wt(S)}.\]
\end{theorem}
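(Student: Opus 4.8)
The plan is to use the destandardization map of Lemma~\ref{lem:dst} to partition $\ASSF(a)$ into fibers, one for each particle-highest filling, and to identify the generating function of each fiber with a single fundamental particle. By Lemma~\ref{lem:dst}, $\destand$ sends $\ASSF(a)$ onto $\HSSF(a)$ and fixes precisely the elements of $\HSSF(a)$, so the fibers $\destand^{-1}(S)$ for $S\in\HSSF(a)$ are disjoint and cover $\ASSF(a)$. Grouping the monomials of $\atom_a=\sum_{T\in\ASSF(a)}x^{\wt(T)}$ by the value of $\destand$ thus gives $\atom_a=\sum_{S\in\HSSF(a)}\sum_{T\in\destand^{-1}(S)}x^{\wt(T)}$. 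Since $\Part_{\wt(S)}=\sum_{c\in\FS(\wt(S))}x^{c}$, it suffices to prove, for each fixed $S\in\HSSF(a)$ with $b=\wt(S)$, that the weight map $T\mapsto\wt(T)$ is a bijection from $\destand^{-1}(S)$ onto $\FS(b)$.

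First I would record the effect of $\destand$ on weights. A single merge step replaces every entry $i$ by $i+1$, which transfers the $i$th part of the weight onto the $(i+1)$st part and leaves a $0$ in position $i$; this is exactly the reverse of a local slide move $\ldots 0k\ldots\to\ldots ij\ldots$ of Definition~\ref{def:slidemoves}. Consequently $\wt(T)$ lies in $\Slide(b)$ for every $T\in\destand^{-1}(S)$, and the content of the fiber is controlled by slide moves on $b$. The crux is to upgrade $\Slide(b)$ to $\FS(b)$: a merge step is applied to $i$ only when the leftmost $i$ is neither in the first column nor has an $i^\uparrow$ weakly to its right, and this is precisely the failure of the particle-highest condition. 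Because $S$ itself is particle-highest, reversing the merges can never fully vacate a position occupied in $b$, which is exactly the fixed-slide restriction ($j>0$ at positions nonzero in the base); I would show this matching of conditions yields $\wt(T)\in\FS(b)$ and, conversely, that every $c\in\FS(b)$ is realized.

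To make the bijection precise and simultaneously settle injectivity and surjectivity, I would construct an explicit inverse $\mathrm{std}\colon\FS(b)\to\ASSF(a)$ and check the two maps are mutually inverse. Given $c\in\FS(b)$, realize a sequence of slide moves carrying $b$ to $c$ directly on the filling $S$: each move that splits the value at position $p$, lowering $i$ units to position $p-1$, is performed by relabelling the $i$ rightmost boxes of value $p$ lying outside the first column down to $p-1$ (the first-column entries, pinned to their row index, are never touched, which is guaranteed by the fixed condition). The verification then has three parts: that $\mathrm{std}(c)$ satisfies the $\ASSF$ conditions (weakly decreasing rows, distinct columns, and all triples inversion triples), that $\destand(\mathrm{std}(c))=S$ so that $\mathrm{std}(c)\in\destand^{-1}(S)$ and $\wt(\mathrm{std}(c))=c$, and that $\mathrm{std}(\wt(T))=T$ for every $T$ in the fiber. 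I expect the main obstacle to be this verification: specifically, showing that lowering the rightmost eligible boxes never creates a coinversion triple nor a repeated column entry, and that destandardizing the result exactly undoes the lowering and returns $S$ rather than some other particle-highest filling. This is the skyline-filling analogue of the weight-level uniqueness argument already carried out for $\FS$ in the proof of Proposition~\ref{prop:fundtoparticle}, and I would model the triple-preservation bookkeeping on the case analysis in Lemma~\ref{lem:dst}.
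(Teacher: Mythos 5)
Your proposal is correct and follows essentially the same route as the paper: partition $\ASSF(a)$ into $\destand$-fibers over $\HSSF(a)$, observe that each merge step reverses a fixed-slide move on weights (with the particle-highest/first-column condition supplying the $j>0$ constraint), and invert by relabelling the rightmost boxes of each value downward, with uniqueness coming from the lack of choice in that relabelling. The paper's proof performs the same construction, phrased in one pass per maximal interval of zeros in $\wt(S)$ rather than move-by-move, and is similarly brief about verifying that the triple and column conditions are preserved.
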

\begin{proof}
Let $T\in \ASSF(a)$ and suppose $\destand(T)=S$. We first show the monomial $x^{\wt(T)}$ belongs to $\Part_{\wt(S)}$.  
Note that if $\destand(T)=S$, then $S$ is obtained from $T$ by a series of moves in which \emph{all} $i$'s are changed to $i+1$'s, and no entry in the first column is ever altered. The corresponding moves taking $\wt(T)$ to $\wt(S)$ are therefore inverses of the moves of Definition~\ref{def:slidemoves} defining fixed slides, hence $\wt(T)$ is a fixed slide of $\wt(S)$.

Conversely, suppose $S\in \HSSF(a)$. We claim that for every fixed slide $b$ of $\wt(S)$, there is a unique $T\in \ASSF(a)$ with $\wt(T)=b$ and $\destand(T)=S$.

This claim implies
\[\Part_{\wt(S)}=\sum_{T\in \destand^{-1}(S)}x^{\wt(T)},\]
and the theorem follows from this and Lemma~\ref{lem:dst}.

To construct $T$ from $S$ and $b$, suppose for some $i<j$ we have $\wt(S)_i>0$ and $\wt(S)_j>0$ but $\wt(S)_{i+1}=\ldots = \wt(S)_{j-1}=0$. Then for any such interval of zeros in $\wt(S)$, $T$ is constructed by changing the rightmost $b_{i+1}$ $j$'s to $i+1$, the next $b_{i+2}$ $j$'s to $i+2$, etc, and the leftmost $b_j$ $j$'s remain $j$'s. 
Note that since $b$ is a fixed slide of $\wt(S)$, we have $b_j>0$. 
By construction, $T$ has weight $b$ and $\destand(T)=S$. To see $T\in \ASSF(a)$, note the leftmost $j$ of $S$ remains a $j$ when constructing $T$, and clearly the triple conditions, weakly decreasing rows and no repeated entries in a column are preserved.  Existence is proved, and uniqueness follows from lack of choice at every step.
\end{proof}

\begin{example}
The Demazure atom $\atom_{0103}$ expands in fundamental particles as follows.
\[\atom_{0103} = \Part_{0103} + \Part_{0202}.\]
\end{example}

\subsection{Littlewood-Richardson rule}

Like the Demazure atom basis, the fundamental particle basis does not have positive structure constants. However, the product of a fundamental particle and a Schur polynomial does expand positively in the basis of fundamental particles. In this section, we give a positive Littlewood-Richardson rule for this expansion, completing the proof of Theorem~B.

Let $\revSSYT_n$ denote the set of reverse tableaux of shape $\lambda$ (i.e., fillings of the Young diagram for $\lambda$ that weakly decrease along rows and strictly decrease down columns), whose largest entry is at most $n$. Let $a$ be a weak composition of length $n$. By definition, the monomials appearing in the product $\Part_a\cdot s_\lambda(x_1,\ldots , x_n)$ arise from the pairs $(S,T)$ where $S \in \LSSF(a)$, $T\in \revSSYT_n(\lambda)$. Denote the set of such pairs by $\Pairs(a, \lambda)$, and for $(S,T)\in \Pairs(a,\lambda)$, let $\wt(S,T) = \wt(S)+\wt(T)$.
 We extend the definition of destandardization to $\Pairs(a, \lambda)$ by considering every label in $T$ to be strictly right of every label in $S$ when applying $\destand$ to $(S,T)\in \Pairs(a, \lambda)$. Let $\HPairs(a,\lambda)$ be the set $\{\destand(S,T) : (S,T) \in \Pairs(a,\lambda)\}$. For an example, see Figure~\ref{fig:pairs}.

\begin{figure}[ht]
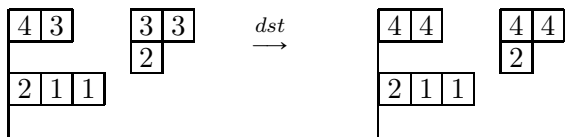

  \begin{center}
    \begin{displaymath}
      \begin{array}{cc@{\hskip\cellsize}cc}
        \vline\tableau{ 4 & 3 \\ \\ 2 & 1 & 1 \\ \\} &
        \tableau{ 3 & 3  \\  2 \\  \\ \\ }  \qquad \substack{dst \\ \longrightarrow} \qquad &
        \vline\tableau{ 4 & 4 \\ \\ 2 & 1 & 1 \\ \\} &
         \tableau{ 4 & 4  \\  2 \\  \\  \\}  
      \end{array}
    \end{displaymath}
    \caption{\label{fig:pairs} Destandardization of an element of $\Pairs((0,3,0,2),(2,1))$, where $n=4$.}
  \end{center}
\end{figure}

\begin{lemma}\label{lem:destandpairs}
If $(S,T)\in \Pairs(a,\lambda)$, then $\destand(S,T)\in \Pairs(a,\lambda)$, and $\destand(S,T)=(S,T)$ if and only if $(S,T)\in \HPairs(a,\lambda)$.
\end{lemma}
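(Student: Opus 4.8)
The plan is to reduce Lemma~\ref{lem:destandpairs} to the analogous statement already proved for single fillings in Lemma~\ref{lem:dst}, by exploiting the convention that every label of $T$ is treated as strictly right of every label of $S$. Under this convention, a pair $(S,T)\in \Pairs(a,\lambda)$ can be regarded as a single combinatorial object: place the boxes of $S$ (in the skyline diagram $D(a)$, with first-column entries equal to their row index) to the left, and the boxes of $T$ (a reverse semistandard Young tableau of shape $\lambda$, entries at most $n$) to the right. The destandardization operator $\destand$ acts on this combined object exactly as it does on an $\ASSF$: for each entry $i$ appearing, if the leftmost $i$ is not in the first column of $S$ and there is no $i^\uparrow$ weakly to its right (now searching across both $S$ and $T$), then every $i$ is replaced by $i+1$. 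The key observation is that the ``leftmost $i$'' and ``$i^\uparrow$ weakly to the right'' conditions are precisely the conditions governing $\destand$ in Lemma~\ref{lem:dst}, merely with the linear order on positions extended so that $S$-positions precede $T$-positions.

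First I would verify that applying one destandardization step to $(S,T)$ keeps the result in $\Pairs(a,\lambda)$, i.e.\ that $S$ remains an element of $\LSSF(a)$ (equivalently $\ASSF(a)\cap \FSSF(a)$) and $T$ remains a reverse semistandard Young tableau with entries at most $n$. For the $S$-component, the argument of Lemma~\ref{lem:dst} applies verbatim: changing all $i$'s to $i+1$'s when no $i^\uparrow$ lies weakly to the right preserves weakly decreasing rows, distinct columns, the triple conditions, and the first-column-equals-row-index condition, and the entry of each box stays bounded by its row index. For the $T$-component, I must check that the same global move preserves semistandardness of $T$: since $T$ is a reverse semistandard Young tableau, rows weakly decrease and columns strictly decrease, and the crucial point is that $\destand$ changes $i\to i+1$ only when $i$ and $i+1$ never share a column (across the combined object), so in particular within $T$ we never create a column repeat, and weakly decreasing rows are preserved because an $i$ immediately right of a larger entry stays $\le$ that entry after becoming $i+1$ only if that entry was $>i$, which holds since $i+1=i^\uparrow$ would otherwise sit weakly to the right. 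I would also confirm entries stay at most $n$, which holds because $\destand$ only raises $i$ to $i+1$ when no occurrence of $i$ is already maximal in the relevant sense; more carefully, an entry equal to $n$ can never be raised since $n$ appearing means $n$ is the largest label, so there is no $n^\uparrow$ to test against and $n$ is never the target of a raise.

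The main obstacle I expect is the interface between the two components: I must ensure that the ``weakly to the right'' search for $i^\uparrow$ correctly spans from $S$ into $T$, and that the destandardization of the combined object is well-defined and terminating in this mixed setting. Termination follows as in Lemma~\ref{lem:dst} since entries only increase and are bounded above (by row index in $S$, by $n$ in $T$). The genuinely new point is that an $i$ in $S$ may be blocked from raising by an $i^\uparrow$ occurring in $T$ (and vice versa), so the proof must treat $S$ and $T$ together rather than independently; this is exactly what the stated convention arranges. Once it is established that $\destand$ maps $\Pairs(a,\lambda)$ to itself, the remaining assertions are formal: by definition $\HPairs(a,\lambda)$ is the image $\{\destand(S,T):(S,T)\in\Pairs(a,\lambda)\}$, and since $\destand$ is idempotent (repeating until no further changes are possible, so a second application changes nothing), we have $\destand(S,T)=(S,T)$ precisely when $(S,T)$ is already a fixed point, i.e.\ when $(S,T)\in\HPairs(a,\lambda)$. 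I would close by noting that idempotence is immediate from the ``repeat until no further changes'' definition, which gives both directions of the final equivalence at once.
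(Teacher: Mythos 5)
Your proposal is correct and follows essentially the same route as the paper: verify component-wise that one destandardization step keeps $S$ in $\LSSF(a)$ and $T$ in $\revSSYT_n(\lambda)$ (using that $\destand$ only fires when no $i^\uparrow$ lies weakly right of the leftmost $i$, so $i$ and $i+1$ never share a column), and then read off the fixed-point characterization directly from the definition of $\HPairs(a,\lambda)$. The only condition you leave implicit is the half of the $\LSSF$ definition beyond the $\ASSF$ conditions of Lemma~\ref{lem:dst} --- that entries strictly increase up the rows of $S$ --- which the paper dispatches in one clause by noting the change is applied only when all $i$'s sit strictly right of all $i^\uparrow$'s, forcing any $i^\uparrow$ in $S$ to lie in the same row as the $i$'s.
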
 
\begin{proof}
Consider entries $i$ in $(S,T)$. Suppose the leftmost $i$ is strictly right of the rightmost $i^\uparrow$. The filling $S'$ obtained by changing all entries $i$ in $S$ to $i+1$ is an element of $\LSSF(a)$, since we do not alter any entry of the first column and since we only change $i$'s to $i+1$'s if all $i$'s are strictly right of all $j$'s. Similarly, the filling $T'$ obtained from $T$ is in $T\in \revSSYT_n(\lambda)$ since by assumption no $i$ is in the same column as any $i^\uparrow$ (and thus any $i+1$), and we change the entire horizontal strip of $i$'s in $T$ to $i+1$'s.

By definition, $\destand(S,T)=(S,T)$ if and only if $(S,T)\in \HPairs(a,\lambda)$.
\end{proof}

\begin{theorem}
Let $a$ be a weak composition of length $n$ and $\lambda$ a partition. Then
\[\Part_a\cdot s_\lambda(x_1,\ldots , x_n) = \sum_{(S,T)\in \HPairs(a, \lambda)}\Part_{\wt(S,T)}.\]
\end{theorem}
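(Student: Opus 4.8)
The plan is to mimic the structure of the proof of Theorem~\ref{thm:atomtoparticle}, using the destandardization operation on $\Pairs(a,\lambda)$ to collect the monomials of the product into fundamental particles. First I would observe that by the definition of $\Part_a$ and the monomial expansion of the Schur polynomial $s_\lambda(x_1,\ldots,x_n)$ (via reverse semistandard Young tableaux of shape $\lambda$ with entries at most $n$), every monomial appearing in the product $\Part_a\cdot s_\lambda$ is $x^{\wt(S,T)}$ for a unique pair $(S,T)\in\Pairs(a,\lambda)$. Thus the left-hand side equals $\sum_{(S,T)\in\Pairs(a,\lambda)}x^{\wt(S,T)}$, and the whole task is to show that grouping these pairs by their image under $\destand$ exactly reproduces the fundamental particle expansion on the right.

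The core claim, paralleling the key claim in the proof of Theorem~\ref{thm:atomtoparticle}, is that for each highest pair $(S,T)\in\HPairs(a,\lambda)$ the fiber $\destand^{-1}(S,T)$ is in weight-preserving bijection with the set of fixed slides $\FS(\wt(S,T))$; equivalently,
\[
\Part_{\wt(S,T)}=\sum_{(S',T')\in\destand^{-1}(S,T)}x^{\wt(S',T')}.
\]
Granting this, the theorem follows immediately: Lemma~\ref{lem:destandpairs} guarantees that $\destand$ is a well-defined retraction of $\Pairs(a,\lambda)$ onto $\HPairs(a,\lambda)$, so the pairs partition into fibers indexed by $\HPairs(a,\lambda)$, and summing the displayed identity over all $(S,T)\in\HPairs(a,\lambda)$ converts $\sum_{\Pairs(a,\lambda)}x^{\wt(S,T)}$ into $\sum_{(S,T)\in\HPairs(a,\lambda)}\Part_{\wt(S,T)}$.

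To prove the fiber identity I would argue in two directions, exactly as in Theorem~\ref{thm:atomtoparticle}, but now treating the concatenated reading of $S$ followed by $T$ as a single word (recall that in $\destand$ every label of $T$ is considered strictly right of every label of $S$). For the forward direction, if $\destand(S',T')=(S,T)$ then $(S,T)$ is reached from $(S',T')$ by a sequence of moves each raising \emph{all} copies of some $i$ to $i+1$ without ever touching a first-column entry of $S'$; reading off weights, these are precisely the inverses of the fixed-slide local moves of Definition~\ref{def:slidemoves}, so $\wt(S',T')\in\FS(\wt(S,T))$. For the reverse direction, given a fixed slide $b$ of $\wt(S,T)$ I would reconstruct the unique preimage: for each maximal run of zeros in $\wt(S)+\wt(T)$ bounded by nonzero parts in positions $i<j$, I distribute the $j$'s among the values $i+1,\ldots,j$ according to $b$, working from the rightmost occurrences, exactly as in the single-filling case but now splitting the occurrences of $j$ across both $S$ and $T$ in the combined right-to-left order. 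Since $b$ is a \emph{fixed} slide, the leftmost copy of $j$ is retained, guaranteeing the reconstructed pair destandardizes back to $(S,T)$; the fixed-slide condition is what forces $b_j>0$ and hence keeps a $j$ in the leftmost position.

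The main obstacle I anticipate is verifying that the reconstructed pair $(S',T')$ genuinely lies in $\Pairs(a,\lambda)$ — that is, that $S'\in\LSSF(a)$ and $T'\in\revSSYT_n(\lambda)$ simultaneously — when the raising/lowering of a single value $i$ straddles the $S$/$T$ boundary. For $S'$ one must preserve the first-column-equals-row-index condition and the strict-increase-between-rows condition of $\LSSF$, and for $T'$ one must preserve the reverse semistandard (weakly decreasing rows, strictly decreasing columns) structure; the subtlety is that a given value may occur in $S$, in $T$, or in both, and the moves must be consistent across the two. The saving point, already isolated in the proof of Lemma~\ref{lem:destandpairs}, is that destandardization only merges an $i$ into an $i+1$ when no $i$ shares a column with any $i^\uparrow$ (in either $S$ or $T$) and all $i$'s lie strictly right of all higher values in the combined order; this simultaneously protects the column-distinctness in $S$ and the strict-column condition in the horizontal strip of $T$, so the same argument that made $\destand$ well-defined makes each reconstruction step legal. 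I would therefore phrase the reconstruction as repeatedly inverting single $\destand$ steps, at each stage invoking Lemma~\ref{lem:destandpairs} to certify that the intermediate pair remains in $\Pairs(a,\lambda)$, with uniqueness following from the absence of any choice at each step.
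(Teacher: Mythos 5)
Your proposal is correct and follows essentially the same route as the paper: both reduce the theorem to the claim that the $\destand$-fiber over each $(S,T)\in\HPairs(a,\lambda)$ has generating function $\Part_{\wt(S,T)}$, prove the forward direction by observing that destandardization steps invert the fixed-slide moves of Definition~\ref{def:slidemoves}, and prove the reverse direction by the same explicit reconstruction (distributing the $j$'s over $i+1,\dots,j$ from the right, with $b_j>0$ guaranteed by the fixed-slide condition). The only cosmetic difference is that the paper verifies membership of the reconstructed pair in $\Pairs(a,\lambda)$ directly rather than by iterating inverses of single $\destand$ steps.
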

\begin{proof}
Let $(\overline{S},\overline{T})\in \Pairs(a,\lambda)$ and suppose $\destand(\overline{S},\overline{T}) = (S,T)$. We begin by showing that the monomial $x^{\wt(\overline{S},\overline{T})}$ is a term of $\Part_{\wt(S,T)}$. Note that if $\destand(\overline{S},\overline{T}) = (S,T)$, then $(S,T)$ is obtained from $(\overline{S},\overline{T})$ by a sequence of moves in which \emph{all} $i$'s are changed to $i+1$'s, and no entry in the first column of $(S,T)$ (i.e., the first column of $S$) is ever altered. The corresponding moves taking $\wt(\overline{S},\overline{T})$ to $\wt(S,T)$ are therefore inverses of the moves of Definition~\ref{def:slidemoves} defining fixed slides, hence $\wt(\overline{S},\overline{T})$ is a fixed slide of $\wt(S,T)$.

Conversely, suppose $(S,T)\in \HPairs(a,\lambda)$. We claim that for every fixed slide $b$ of $\wt(S,T)$, there is a unique $(\overline{S},\overline{T})\in \Pairs(a,\lambda)$ with $\wt(\overline{S},\overline{T})=b$ and $\destand(\overline{S},\overline{T}) = (S,T)$.

The claim implies 
\[\Part_{\wt(S,T)} = \sum_{(\overline{S},\overline{T})\in \destand^{-1}(S,T)} x^{\wt(\overline{S},\overline{T})},\]
and the theorem follows from this and Lemma~\ref{lem:destandpairs}.

To construct $(\overline{S},\overline{T})$ from $(S,T)$ and $b$, suppose that for some $i<j$ we have $\wt(S,T)_i>0$, $\wt(S,T)_j>0$ but $\wt(S,T)_{i+1} = \wt(S,T)_{i+2} = \ldots =\wt(S,T)_{j-1} =0$. Then for any such interval of zeros in $\wt(S,T)$, $(\overline{S},\overline{T})$ is constructed by changing the rightmost $b_{i+1}$ $j$'s to $i+1$, then next $b_{i+2}$ $j$'s to $i+2$, etc, and the leftmost $b_j$ $j$'s remain as $j$'s. 
Note that since $b$ is a fixed slide of $\wt(S,T)$, we have $b_j>0$. 
By construction, $(\overline{S},\overline{T})$ has weight $b$ and $\destand(\overline{S},\overline{T})=(S,T)$. To see that $(\overline{S},\overline{T})\in \Pairs(a,\lambda)$, note the leftmost $j$ of $(S,T)$ remains a $j$ when constructing $(\overline{S},\overline{T})$, and it is clear that the remaining $\LSSF$ and $\revSSYT$ conditions are preserved. Existence is proved, and uniqueness follows from the lack of choice at every step.
\end{proof}

%
\section{Bijections between quasi-key tableaux and semi-skyline fillings}\label{sec:bijection}
%
In this section we examine the relationship between semi-skyline fillings and quasi-key tableaux. Let $\ASSF$ denote the set of all $\ASSF(a)$ as $a$ ranges over weak compositions, and define $\qKT^{(1)}$ similarly. Let $\revSSYT$ denote the set of all reverse tableaux of all shapes, and call the set of entries in the $c$'th column of a tableau filling the $c$'th \emph{column set}. We begin by constructing a column-set (and thus weight) preserving bijection between $\ASSF$ and $\qKT^{(1)}$.  
We then use the combinatorics of the fundamental particle basis to restrict this to a bijection between $\HSSF=\{\destand(S): S\in \ASSF\}$ and a similarly-defined subset $\HqKT^{(1)}$ of $\qKT^{(1)}$. By using a different destandardization map, we restrict this bijection yet further to a bijection between a subset $\QSSF$ of $\HSSF$ and $\QqKT^{(1)}$, the set of quasi-Yamanouchi quasi-key tableaux whose first column entries are equal to their row index. Finally, we use a result of \cite{MPS18} to show these bijections preserve the underlying skyline diagram.

\subsection{Row-filling algorithms and bijections to reverse tableaux}

First, we recall the bijection between semi-skyline augmented fillings and reverse tableaux given in \cite{Mason08}. Given $V\in \revSSYT$, create the first column of $S\in \ASSF$ from the first column set of $V$ by placing each entry $i$ in row $i$. Now supposing we have constructed column $c$ of $S$, create column $c+1$ of $S$ from the $(c+1)$'th column set of $V$ by placing the largest entry as low as possible in column $c+1$ of $S$ such that the decreasing row condition remains satisfied, then repeat with the next-largest entry, etc. Call this the \emph{column-filling algorithm} for building an $\ASSF$ from a $\revSSYT$. Note the column-filling algorithm preserves column sets: for all $c$, the $c$'th column set of $V$ is equal to the $c$'th column set of $S$.

\begin{lemma}[\cite{Mason08}]
The column-filling algorithm is a bijection from $\revSSYT$ to $\ASSF$.
\end{lemma}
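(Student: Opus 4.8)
The plan is to show that the column-filling algorithm is a bijection from $\revSSYT$ to $\ASSF$ by exhibiting an explicit inverse, namely a \emph{column-reading} procedure that recovers a $\revSSYT$ from any $\ASSF$. First I would verify that the column-filling algorithm is well-defined: given a column set (a set of distinct positive integers) and a previously-built column $c$ of $S$, placing the entries of the $(c+1)$'th column set from largest to smallest, each as low as possible so that the weakly-decreasing-row condition with column $c$ is maintained, always produces a legal placement. The key point here is that since each column set has distinct entries and the target rows are constrained only by the row-decrease condition against column $c$, greedy placement from the top down (largest entry lowest) never gets stuck, and the resulting column automatically has distinct entries. I would also confirm that the output genuinely lies in $\ASSF(a)$ for the appropriate $a$: the first-column-equals-row-index condition holds by construction of column $1$, rows weakly decrease by design, columns have no repeats, and one must check the triple (inversion) conditions are forced by the greedy ``as low as possible'' rule, which is the content already present in \cite{Mason08}.

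Next I would construct the candidate inverse map. Given $S\in \ASSF$, read off its $c$'th column set for each $c$; these column sets, taken together, must reassemble into a $\revSSYT$ $V$ of the same shape (the shape being the partition $\sort$ of the underlying composition, read as column lengths). The essential claim is that the $c$'th column set of any $S\in \ASSF$, when its entries are placed in decreasing order down column $c$ of a Young diagram, yields a filling that strictly decreases down columns and weakly decreases along rows — i.e., a reverse semistandard Young tableau. Strict column decrease is immediate since column sets consist of distinct integers sorted decreasingly; the weak row-decrease between adjacent columns of $V$ is exactly the condition that must be extracted from the $\ASSF$ axioms (decreasing rows plus inversion triples). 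This is where the real work lies.

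The hard part will be proving that the column-filling algorithm and this column-reading procedure are mutually inverse, and in particular that column sets of an $\ASSF$ always reassemble into a valid $\revSSYT$. Since column-filling manifestly preserves column sets (as noted in the excerpt), it suffices to show two things: (i) applying column-filling to the $\revSSYT$ built from $S$ returns $S$, and (ii) the reading procedure applied to any $S\in \ASSF$ produces an object satisfying the $\revSSYT$ conditions. For (i), I would argue that within a fixed column set, the ``as low as possible'' placement rule is uniquely determined by the relation to the preceding column, so once the column sets and first column agree, the entire filling is forced; hence column-filling is injective on column-set data and recovers $S$. For (ii), the inversion-triple condition in the $\ASSF$ is precisely what guarantees that, after sorting each column set into a column, horizontally adjacent entries satisfy the weak row-decrease needed for semistandardness — this is the translation between the ``triple/skyline'' language and the ``row/column'' language of Young tableaux, and making this implication airtight is the main obstacle. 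Once both directions are established, weight preservation is automatic since column sets are preserved and $\wt$ counts entries, completing the proof that the column-filling algorithm is a bijection.

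Since this lemma is attributed to \cite{Mason08}, I expect the cleanest exposition is to invoke the well-definedness and the triple-condition translation from that reference, and to present the inverse column-reading map explicitly, checking mutual inverseness by the uniqueness of greedy placement within each column set. If a fully self-contained argument is preferred, the one genuinely new verification to spell out is that the inversion-triple axiom forces the inter-column weak decrease of $V$, which can be done by a short case analysis on Type A and Type B triples analogous to the triple-preservation argument already carried out in the proof of Lemma~\ref{lem:dst}.
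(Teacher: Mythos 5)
The paper offers no proof of this lemma: it is stated as a recollection of \cite{Mason08}, so there is no in-text argument to compare yours against. Your outline is essentially the standard one from that reference --- verify the greedy forward map is well defined, define the inverse by reading off the column sets of an $\ASSF$ and sorting each decreasingly into a column of the Young diagram of shape $\sort(a)$, and check the two maps are mutually inverse --- and the overall architecture is sound, including your observation that weight preservation is automatic once column sets are preserved.

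One step is under-justified. For your point (i) you argue that because the ``as low as possible'' rule determines the greedy output uniquely from the column sets, the composite (read off the column sets of $S$, then refill) ``recovers $S$.'' Uniqueness of the greedy output only tells you what the \emph{algorithm} produces; it does not tell you that an arbitrary $S\in\ASSF$ with those column sets must coincide with that output. What is actually needed is that no two elements of $\ASSF$ share all of their column sets --- equivalently, that the inversion-triple axioms force every $\ASSF$ to satisfy the ``largest entry sits lowest subject to the row condition'' property column by column. This is a genuine second use of the triple conditions, distinct from the one you correctly flag in step (ii) (that the column sets of an $\ASSF$ reassemble into a $\revSSYT$), and it is the $\ASSF$ analogue of what the paper proves for $\qKT^{(1)}$ in Lemma~\ref{lem:colsets}. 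If you are content to cite \cite{Mason08} for both triple-condition verifications the proof is complete; if you want a self-contained argument, this second verification needs its own case analysis on Type A and Type B triples.
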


We introduce an alternative (but equivalent) method that we call the \emph{left row-filling algorithm}. Given $V\in \revSSYT$, form the lowest row of $S$ by first taking the smallest entry, say $i$, in the first column of $V$ and placing it in row $i$. Then fill out this row by choosing the largest entry from column $2$ of $V$ that is weakly smaller than $i$, the largest entry from column $3$ of $V$ that is weakly smaller than the entry chosen for column $2$, etc. Delete all chosen entries from $V$, then repeat the algorithm to make the second-lowest row of $S$, etc. See Figure~\ref{fig:leftright} for an example.

\begin{lemma}
The left row-filling algorithm is a well-defined decomposition of a reverse tableau $V$, i.e., all runs use one entry in the first column, and all entries of the reverse tableau appear in some run.
\end{lemma}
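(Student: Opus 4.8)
The plan is to prove the lemma by induction on the number of runs extracted (equivalently, on the total number of entries), maintaining a structural invariant on the \emph{remaining} column sets. For a filling whose columns strictly decrease, write the remaining entries of column $c$ in decreasing order as $a^{(c)}_1 > a^{(c)}_2 > \cdots > a^{(c)}_{p_c}$. The invariant I would carry is the \emph{column domination} condition: for every $c$ one has $p_{c+1} \le p_c$ and $a^{(c+1)}_i \le a^{(c)}_i$ for all $1 \le i \le p_{c+1}$. This invariant does two jobs at once: the size inequalities $p_{c+1} \le p_c$ make ``column $1$ is empty'' force ``all columns are empty,'' which yields the exhaustion statement, while the entrywise inequalities are exactly what make each greedy step of a run succeed.

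First I would check the invariant for $V \in \revSSYT$ itself. Since $V$ has partition shape, column $c+1$ occupies a (top-justified) subset of the rows of column $c$, giving $p_{c+1} \le p_c$; and for $i \le p_{c+1}$ the $i$th largest entries $a^{(c)}_i$ and $a^{(c+1)}_i$ of the two columns lie in a common row, so weak decrease along that row gives $a^{(c+1)}_i \le a^{(c)}_i$. Next I would trace a single run under the invariant. The run starts at the smallest first-column entry $x_1 = a^{(1)}_{p_1}$, and having selected $x_c = a^{(c)}_{j_c}$, it takes $x_{c+1}$ to be the largest remaining column-$(c+1)$ entry that is $\le x_c$. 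Using $a^{(c+1)}_{j_c} \le a^{(c)}_{j_c} = x_c$ (valid when $j_c \le p_{c+1}$), I would show such an entry exists precisely when $j_c \le p_{c+1}$, that the selected depth satisfies $j_{c+1} \le j_c$, and that otherwise (when $j_c > p_{c+1}$) the run correctly stops. In particular each run occupies a prefix of the columns and removes exactly one entry from column $1$.

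The crux is that deleting a run preserves the invariant. I would argue columnwise, comparing adjacent columns $c, c+1$ after deletion in three cases: both columns lost an entry (indices $j_c \ge j_{c+1}$), only column $c$ lost one (the stopping column, where $p_{c+1} < j_c$), or neither did (trivial). In each case I reindex the shortened sorted columns and verify $\tilde a^{(c+1)}_i \le \tilde a^{(c)}_i$ by splitting the index range at $j_{c+1}$ and $j_c$ and invoking the previous inequalities; the size inequalities follow from $j_{c+1} \le j_c$ and, in the stopping case, from $p_{c+1} < j_c \le p_c$. I expect this case analysis to be the main obstacle: one must track carefully how removing the $j_c$th and $j_{c+1}$th largest entries shifts the indexing, and the stopping case is precisely where the size inequality is genuinely used rather than automatic.

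Finally I would assemble the conclusion. Each run removes exactly one column-$1$ entry, so the process terminates after exactly $\ell(\lambda)$ runs, at which point column $1$ is empty; the invariant then gives $p_2 \le p_1 = 0$, hence $p_2 = 0$, and inductively $p_c = 0$ for all $c$, so every entry of $V$ has been placed in some run. Combined with the observation that each run uses precisely one first-column entry by construction, this shows the left row-filling algorithm is a well-defined decomposition with both asserted properties.
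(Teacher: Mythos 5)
Your overall strategy is essentially the paper's: the paper also proves the lemma by showing that deleting a run from a reverse semistandard Young tableau leaves another reverse semistandard Young tableau and then inducting, and your column-domination invariant ($p_{c+1}\le p_c$ together with $a^{(c+1)}_i\le a^{(c)}_i$) is precisely the condition that the surviving column sets reassemble into an element of $\revSSYT$. The only real difference is presentational: the paper argues with box positions (the box chosen in column $c+1$ sits weakly above the box chosen in column $c$, so collapsing the columns preserves weakly decreasing rows), whereas you argue with indices into the sorted column sequences.

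There is, however, one intermediate claim you say you would prove that is false as stated: it is not true that a continuing entry ``exists precisely when $j_c\le p_{c+1}$,'' nor that the run stops whenever $j_c>p_{c+1}$. Take $V$ with first column $\{5,3\}$ and second column $\{2\}$, i.e.\ rows $(5,2)$ and $(3)$. The first run starts at $x_1=3$, so $j_1=2>p_2=1$, yet column $2$ contains an entry weakly smaller than $3$, namely $2$, so the algorithm continues the run (correctly: it only stops when \emph{every} remaining entry of the next column exceeds $x_c$). Only one direction of your biconditional is valid, and fortunately it is the direction your case analysis actually needs: if $j_c\le p_{c+1}$ then $a^{(c+1)}_{j_c}\le a^{(c)}_{j_c}=x_c$ guarantees the run continues, so contrapositively a run that stops at column $c$ forces $p_{c+1}<j_c$ (your Case B); and whenever the run does continue one has $j_{c+1}\le j_c$, since either $j_c\le p_{c+1}$ and the entry at depth $j_c$ is already admissible, or $j_{c+1}\le p_{c+1}<j_c$ (your Case A). If you restate the run analysis using just these two implications, your deletion and exhaustion arguments go through unchanged, so the flaw is local and repairable rather than fatal --- but as written, the step ``otherwise the run correctly stops'' cannot be proved.
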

\begin{proof}
The statement is clear if $V$ has only one row. We show that deleting the boxes from such a run yields another $\revSSYT$; the statement then follows by induction on the number of rows of $V$. Consider a pair of adjacent columns $c$, $c+1$ of $V$. Assuming a run uses an entry of both column $c$ and $c+1$ (i.e., the run doesn't end in column $c$), first observe that the entry $j$ of column $c+1$ chosen by the run must be weakly above the entry $i$ of column $c$ chosen by the run, since otherwise the entry immediately right of $i$ is strictly greater than $j$ but less than or equal to $i$, so this entry would have been chosen instead. When boxes $i$ and $j$ are removed, for all rows $r$ between the row of $j$ and the row immediately above $i$, the box of row $r$ in column $c$ is now adjacent to the box of row $r+1$ in column $c+1$. But this preserves weakly decreasing rows, since entries of $V$ decrease down columns. If the run uses an entry of column $c$ is but not $c+1$, then all boxes in column $c+1$ have strictly larger entry than the removed box in column $c$, hence this removed box has no entry immediately right of it. Hence removing such a run yields a new $\revSSYT$.
\end{proof}

\begin{lemma}
The left row-filling algorithm yields the same $\ASSF$ as the column-filling algorithm.
\end{lemma}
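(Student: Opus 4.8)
The plan is to prove that the two algorithms place every entry of $V$ in the same box, by induction on the column index. First I would record the two facts that start the induction and make the procedures comparable. Both algorithms are \emph{column-set preserving}: an entry lying in column $c$ of $V$ is placed in column $c$ of the output (for the column-filling algorithm this is noted in the text; for the left row-filling algorithm it is immediate, since the $c$th entry chosen in a run is drawn from column $c$ of $V$, and by the preceding lemma every entry of $V$ is used exactly once). In particular the two outputs have the same column sets, and the same first column: since first-column entries of an $\ASSF$ equal their row index, the first column in both cases is obtained by placing each first-column entry $i$ of $V$ in row $i$.

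Next I would reformulate, for each algorithm, how the $(c+1)$st column set is distributed among the rows, assuming (strong inductive hypothesis) that columns $1,\dots,c$ of the two outputs already agree. Write $x_r$ for the common entry in row $r$, column $c$, defined for rows occupied in column $c$. A box in column $c+1$ may legally be placed only in a row $r$ occupied in column $c$ and only with an entry $e\le x_r$ (left-justification forbids gaps, and weakly decreasing rows force $e\le x_r$; column-distinctness is automatic since column sets of a $\revSSYT$ have no repeats). The column-filling algorithm assigns the entries of the $(c+1)$st column set largest-to-smallest, each to the lowest still-empty feasible row; call this procedure (a). The left row-filling algorithm builds its runs in increasing order of first-column entry, and the run with first-column entry $i$ occupies row $i$, so it processes the occupied rows from bottom to top, assigning to each row the largest still-unused entry $\le x_r$; call this procedure (b). Thus the inductive step reduces to a single combinatorial claim.

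The heart of the proof, and the step I expect to be the main obstacle, is the following order-independence of greedy matching: given values $x_r$ attached to the occupied rows (no monotonicity assumed) and a set $E$ of distinct entries, where $e$ is compatible with row $r$ exactly when $x_r\ge e$, procedures (a) and (b) produce the same assignment. I would prove this by induction on $|E|$. Let $e$ be the largest entry of $E$ and let $\rho$ be the lowest row with $x_\rho\ge e$. In (a), $e$ is placed first and lands in $\rho$ by definition. In (b), every row below $\rho$ is infeasible for $e$ (its $x$-value is $<e$) and so leaves $e$ untouched; hence when $\rho$ is processed $e$ is still available, and being the globally largest entry it is the largest available entry $\le x_\rho$, so $\rho$ takes $e$ here as well. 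Deleting $\rho$ and $e$ leaves the same subinstance for both procedures, since rows below $\rho$ never competed for $e$ and rows above $\rho$ simply continue, so the remaining assignments agree by induction.

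Finally I would assemble the pieces: the base case $c=1$ is the first-column agreement above, and the inductive step follows from the matching claim applied to the common values $x_r$ and the common $(c+1)$st column set. Hence the two algorithms place each entry identically and produce the same element of $\ASSF$. As a byproduct, since the column-filling output lies in $\ASSF$ by Mason's lemma, so does the left row-filling output, so no separate verification of the $\ASSF$ conditions for the row-filling algorithm is needed.
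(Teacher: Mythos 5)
Your proof is correct, and it reaches the conclusion by a genuinely different organization than the paper's. The paper argues by minimal counterexample: it takes the earliest box (lowest row, then leftmost column, i.e.\ first in the order the left row-filling visits boxes) where the two outputs disagree, and in each of the two cases ($i<j$ and $i>j$) shows that the larger of the two competing entries must already have been placed in a strictly lower row by the other algorithm, contradicting minimality. You instead induct on columns and reduce the inductive step to a self-contained order-independence lemma for greedy matching: assigning distinct entries to rows with thresholds $x_r$, the ``entries largest-to-smallest, each to the lowest feasible row'' discipline and the ``rows bottom-to-top, each taking the largest feasible remaining entry'' discipline coincide, proved by peeling off the globally largest entry. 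The underlying exchange argument is the same combinatorial fact the paper uses, but your packaging isolates it as a reusable abstract statement and makes the verification local to one column at a time; the price is that you must justify recasting the row-major left row-filling as a column-by-column, bottom-to-top assignment, which you do correctly by observing that the choice in column $c+1$ of row $r$ depends only on that row's column-$c$ entry and on which column-$(c+1)$ entries lower rows have already consumed (this in turn leans on the preceding well-definedness lemma, as does the paper). Your closing observation --- that $\ASSF$-membership of the row-filling output is inherited from Mason's lemma once the outputs are shown equal --- matches the paper's implicit logic.
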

\begin{proof}
Suppose for a contradiction that the two algorithms disagree. Consider the lowest then leftmost box $C$ (i.e. earliest in terms of the left row-filling) where the entry $i$ given by the left row-filling differs from the corresponding entry $j$ given by the column-filling. 

First suppose that $i<j$. The entry of the box immediately left of $C$ is the same in both fillings, and the column filling was able to place $j$ in $C$. The left row-filling, which places the largest possible entry available, placed $i<j$ in $C$, so the only way this could have happened is if the left row-filling already placed $j$ in a lower row. This contradicts the assumption that $C$ is the lowest then leftmost box at which the algorithms differ. Note the same contradiction occurs if the column-filling places $j$ in $C$ and the left row-filling places nothing in $C$.

Now suppose that $i>j$. The entry of the box immediately left of $C$ is the same in both fillings, and the left row-filling was able to place $i$ in $C$. The column-filling places entries of a column from largest to smallest, placing each as low as possible. Thus the column filling placed $i$ in the column of $C$ before it placed $j$ in $C$, so when it placed $i$, $C$ was available. So since the column-filling did not place $i$ in $C$, it must have placed $i$ lower in the column. This again contradicts the assumption that $C$ is the lowest then leftmost box at which the algorithms differ. Note the same contradiction occurs if the left row-filling places $i$ in $C$ and the column-filling places nothing in $C$.
\end{proof}

We now establish a column-set preserving map $\phi:\qKT^{(1)}\rightarrow \revSSYT$. 
Given $T\in \qKT^{(1)}$, let $\phi(T)$ be the tableau created by top-justifying $T$ and reordering entries in each column so they decrease from top to bottom. 

\begin{lemma}
If $T\in \qKT^{(1)}$, then $\phi(T)\in \revSSYT$, and $\phi(T)$ preserves column sets.
\end{lemma}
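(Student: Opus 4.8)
The plan is to verify, for $\phi(T)$, the three defining properties of an element of $\revSSYT$—that its underlying shape is a Young diagram, that its columns strictly decrease from top to bottom, and that its rows weakly decrease from left to right—together with the column-set claim, which is essentially immediate from the construction.

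First I would dispose of the easy parts. Since $\phi$ only rearranges entries within each column and slides columns vertically (top-justification), it never moves an entry between columns; hence the multiset of entries in column $c$ of $\phi(T)$ equals that of column $c$ of $T$, which gives the column-set preservation (and, by quasi-key condition (2), these multisets are genuine sets). For the shape: the number of boxes in column $c$ of $D(a)$ is $\#\{i : a_i \ge c\}$, which is weakly decreasing in $c$, so top-justifying produces the Young diagram of the partition whose $c$th column has this length. For the columns: because entries in each column of $T$ are distinct, reordering them into decreasing order makes each column of $\phi(T)$ strictly decrease from top to bottom.

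The substantive step is the weakly-decreasing-row condition. I would fix adjacent columns $c$ and $c+1$ and compare their sorted entries. Since $D(a)$ is left-justified, every box in column $c+1$ lies immediately to the right of a box in column $c$ in the same row of $T$, and quasi-key condition (1) forces the left entry to be weakly larger than the right entry. This yields an injection from the entries of column $c+1$ to those of column $c$ that never decreases values; consequently, for every threshold $t$, column $c$ contains at least as many entries $\ge t$ as column $c+1$ does.

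Finally I would invoke the elementary fact that if finite sets of integers $A$ and $B$ satisfy $\#\{x \in A : x \ge t\} \ge \#\{x \in B : x \ge t\}$ for all $t$, then listing both in decreasing order gives $a_{(r)} \ge b_{(r)}$ for every rank $r$. Applied to the entries of columns $c$ and $c+1$, this shows the $r$th entry from the top in column $c$ of $\phi(T)$ is weakly larger than the $r$th from the top in column $c+1$; since after top-justification these occupy a common row, rows of $\phi(T)$ weakly decrease. The main obstacle is precisely this last comparison: the injection gives control of cardinalities above each threshold, and one must translate that threshold control into entrywise domination of the sorted columns, which is exactly the weakly-decreasing-row requirement for a reverse semistandard Young tableau.
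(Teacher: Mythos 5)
Your proof is correct and follows the same route as the paper's: column sets, partition shape, and strict column decrease are handled identically, and for the row condition both arguments rest on the observation that each entry in column $c+1$ sits immediately to the right of a weakly larger entry in column $c$. The only difference is that you spell out the final combinatorial step (a value-nondecreasing injection gives threshold counts, which give entrywise domination of the decreasingly sorted columns), a step the paper leaves implicit.
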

\begin{proof}
It is clear that column sets are preserved. Since $T$ is a filling of a weak composition diagram, the number of entries in column $c$ of $T$ is weakly larger than the number of entries in column $c+1$ of $T$, for all $c$. Thus $\phi(T)$ has partition shape. Since $T$ does not repeat entries in any column, by construction the columns of $\phi(T)$ are strictly decreasing. Since every entry in column $c+1$ appears immediately right of a weakly larger entry in column $c$, the rows of $\phi(T)$ are weakly decreasing. Thus $\phi(T)\in \revSSYT$.
\end{proof}

We want to show $\phi$ is a bijection. To this end, we define a map $\psi: \revSSYT\rightarrow \qKT^{(1)}$ as follows. Given $V\in \revSSYT$, 
we build a skyline filling $T$ by rows. Starting with the rightmost column set, say $C_k$, of $V$, choose the smallest number in $C_k$. Then choose the smallest number in $C_{k-1}$ that is weakly larger than $k$. Continue in this manner until you end by choosing a number $\ell$ in $C_1$. The elements chosen form a row of $T$ with row index $\ell$. Then delete all chosen elements from $V$ and repeat.  Call this the \emph{right row-filling algorithm}: see Figure~\ref{fig:leftright} for an example.

\begin{lemma}
Right row-filling is a well-defined decomposition of a reverse tableau, i.e., all runs use one entry in the first column, and all entries of the reverse tableau appear in some run.
\end{lemma}
\begin{proof}
Like for left row-filling, the statement is clear if $V$ has only one row. We show that deleting the boxes from such a run yields another $\revSSYT$; the statement follows by induction on the number of rows of $V$. Consider a pair of adjacent columns $c$, $c+1$ of $V$. Assuming a run uses an entry of both column $c$ and $c+1$ (i.e., the run doesn't start in column $c$), first observe that the entry $j$ of column $c+1$ chosen by the run must be weakly above the entry $i$ of column $c$ chosen by the run, since otherwise the entry to the left of $j$ is greater than or equal to $j$ but strictly smaller than $i$, so this entry would have been chosen instead. When boxes $i$ and $j$ are removed, for all rows $r$ between the row of $j$ and the row immediately above $i$, the box of row $r$ in column $c$ is now adjacent to the box of row $r+1$ in column $c+1$. But this preserves weakly decreasing rows, since entries of $V$ decrease down columns. If the run uses an entry of column $c$ but not $c+1$, then by definition column $c+1$ is empty. Hence removing such a run yields a new $\revSSYT$. 

Since $V\in\revSSYT$, the run found by this process always ends in the first column: for any box you choose in column $c+1$ there is a box with a greater than or equal value in column $c$. Hence by induction, right row-filling gives a well-defined decomposition of a $\revSSYT$.
\end{proof}

\begin{figure}[ht]
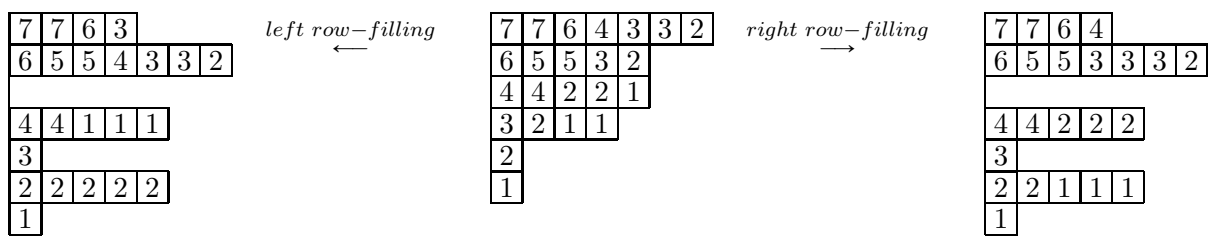

  \begin{center}
    \begin{displaymath}
      \begin{array}{c@{\hskip\cellsize}c@{\hskip\cellsize}c}
        \vline\tableau{ 7 & 7 & 6 & 3 \\ 6 & 5 & 5 & 4 & 3 & 3 & 2 \\ \\  4 & 4 & 1 & 1 & 1 \\ 3 \\ 2 & 2 & 2 & 2 & 2 \\ 1 } \,\,\,\,\,\,\, \substack{left \,\, row-filling \\ \longleftarrow} \,\,\,\,\, &
        \tableau{ 7 & 7 & 6 & 4 & 3 & 3 & 2  \\  6 & 5 & 5 & 3 & 2 \\  4 & 4 & 2 & 2 & 1   \\  3 & 2 & 1 & 1 \\ 2 \\ 1  } \,\,\,\,\,\,\, \substack{right \,\, row-filling \\ \longrightarrow} \,\,\,\,\, &
        \vline\tableau{ 7 & 7 & 6 & 4 \\ 6 & 5 & 5 & 3 & 3 & 3 & 2 \\ \\ 4 & 4 & 2 & 2 & 2 \\ 3 \\  2 & 2 & 1 & 1 & 1 \\ 1 } 
      \end{array}
    \end{displaymath}
    \caption{\label{fig:leftright}The left and right row-filling algorithms on a reverse tableau.}
  \end{center}
\end{figure}

\begin{lemma}\label{lem:SSYTtoqKT}
For any $V\in \revSSYT$, $\psi(V)\in \qKT^{(1)}$.
\end{lemma}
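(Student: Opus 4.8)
The plan is to verify that $\psi(V)$, the output of the right row-filling algorithm, satisfies all four defining conditions of a quasi-key tableau (Definition~\ref{def:qktableaux}) together with the condition that first-column entries equal their row index. The preceding lemma already guarantees that right row-filling is a well-defined decomposition of $V$ into runs, each ending in the first column; so each run becomes a row of $\psi(V)$ whose row index is precisely the value $\ell$ of its first-column entry. This immediately forces the first-column entry of each row to equal its row index, which is the defining condition for membership in $\qKT^{(1)}$ (and subsumes the ``entries increase up the first column'' part of condition (2), once distinctness is known).

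First I would check condition (1): within a single run, the algorithm chooses, as it moves leftward from column $k$ to column $1$, a number in $C_{c}$ that is weakly larger than the number chosen in $C_{c+1}$. Hence reading the run from the first column (rightmost-indexed entry, i.e. row position $1$) outward, entries weakly decrease along the row; and since the run ends in column $1$ at the value $\ell$ equal to the row index, and all later entries are weakly smaller, no entry of row $\ell$ exceeds $\ell$. Next, for condition (2) (distinct column entries), I would argue that two entries placed in the same column $c$ of $\psi(V)$ come from two different runs and hence from two distinct boxes of column $c$ of $V$; since a $\revSSYT$ has strictly decreasing columns, these source values are distinct, and as the algorithm never alters values, the column entries of $\psi(V)$ are distinct.

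The main work — and the step I expect to be the main obstacle — is verifying conditions (3) and (4), which are the genuinely ``skyline-flavored'' constraints relating entries in different rows and columns. The key tool is the structural observation, essentially established inside the proof of the well-definedness lemma, that when a run passes from column $c$ to column $c+1$, the chosen box in column $c+1$ lies weakly above the chosen box in column $c$ within $V$; equivalently, later (higher-indexed) rows of $\psi(V)$ are built from runs occupying boxes higher up in $V$. I would translate each of conditions (3) and (4) into a statement about the relative vertical positions and values of the source boxes in $V$, then deduce the required inequality from the strictly-decreasing-column and weakly-decreasing-row properties of the $\revSSYT$ together with the greedy ``smallest weakly-larger'' choice rule of the algorithm. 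For condition (3), if entry $i$ sits above entry $k$ in the same column $c$ of $\psi(V)$ with $i<k$, I would locate the box immediately right of $k$ (in column $c+1$) in the run of the lower row and show its value $j$ satisfies $i<j$, using that the run containing $i$ could not have greedily selected that column-$(c+1)$ box. For condition (4), given two rows with the higher strictly longer, I would compare the source boxes of the relevant adjacent-column entries and use the greedy minimality of the right row-filling to force the strict inequality $i<j$.

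Finally I would assemble these verifications into the conclusion $\psi(V)\in\qKT^{(1)}$, noting that the shape of $\psi(V)$ is automatically the diagram $D(a)$ of some weak composition $a$ (its rows are indexed by their first-column values and left-justified). I anticipate conditions (3) and (4) will require the most careful case analysis, because they mix the horizontal greedy-choice rule with the vertical ordering of runs; the cleanest route is to phrase everything in terms of the positions of the source boxes in $V$ and let the $\revSSYT$ inequalities do the work, rather than reasoning directly in $\psi(V)$.
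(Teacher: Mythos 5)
Your overall strategy --- direct verification of the four conditions of Definition~\ref{def:qktableaux} plus the first-column condition --- is a genuinely different route from the paper, which instead identifies the right row-filling algorithm with the \emph{thread map} of \cite[Definition 3.5, Lemma 3.6]{AS2} on Kohnert diagrams and imports the result through the dictionary of Remark~\ref{rmk:quasi-key}. Your handling of the first-column condition and of conditions (1) and (2) is correct and complete: each run ends at a distinct first-column entry $\ell$ which becomes the row index, rows weakly decrease because the run is built right-to-left by weakly increasing choices, and column distinctness follows from strict decrease down columns of the $\revSSYT$.

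The gap is that conditions (3) and (4), which you yourself identify as carrying all the content of the lemma, are never actually verified; you only describe a plan. Two specific difficulties are not engaged. First, in condition (3) you must establish that the box ``immediately right of $k$'' exists at all, i.e.\ that the lower row reaches column $c+1$; when it does not (for instance when both runs start in column $c$), you must instead rule out the configuration by showing that the row containing the smaller entry $i$ lies \emph{below} the row containing $k$. Second, ``above'' and ``below'' in $\psi(V)$ are determined by the first-column entries of the two runs, which are only known after each run has been greedily completed all the way to column $1$; so any argument for (3) or (4) must propagate the comparison of the two runs leftward, column by column (e.g.\ showing that if the earlier-created of two runs picks the strictly smaller entry in some column, its choices remain strictly smaller in every column to the left, hence it lands in a strictly lower row). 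This propagation-plus-case-analysis is precisely the work that \cite[Lemma 3.6]{AS2} performs in the Kohnert-diagram language; without it your sketch is a plausible plan rather than a proof. If you prefer not to redo that work, the paper's route --- translating runs to threads via Remark~\ref{rmk:quasi-key} and citing \cite{AS2} --- is the shorter path.
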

\begin{proof}
The right row-filling algorithm is straightforwardly equivalent to the \emph{thread map} \cite[Definition 3.5, Lemma 3.6]{AS2} on Kohnert diagrams. Translating to quasi-key tableaux (see Remark~\ref{rmk:quasi-key}) the threads of \cite{AS2} are exactly the rows of the quasi-key tableaux. 
The statement then follows from \cite[Lemma 3.6]{AS2} and the fact that for every $\revSSYT$ of shape $\lambda$, the entries index the positions of the boxes in a Kohnert diagram for $D(\lambda)$.
\end{proof}

\begin{lemma}\label{lem:colsets}
No two elements of $\qKT^{(1)}$ have identical column sets.
\end{lemma}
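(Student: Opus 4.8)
The plan is to show that a quasi-key tableau $T\in\qKT^{(1)}(a)$ is completely determined by its sequence of column sets, so that distinct elements must have distinct column sets. The key observation is that the column sets record \emph{which} entries occupy each column, but the quasi-key conditions force a unique way to \emph{reassemble} those entries into rows. Concretely, I would argue that $T$ can be recovered from its column sets by running the right row-filling algorithm $\psi$ (or an inverse construction), using the fact established in Lemma \ref{lem:SSYTtoqKT} that $\psi$ lands in $\qKT^{(1)}$.

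First I would set up the recovery map. Given the column sets of $T$, form the tableau $\phi(T)\in\revSSYT$ by top-justifying and sorting each column into decreasing order; since $\phi$ preserves column sets, $\phi(T)$ depends only on the column sets of $T$, not on $T$ itself. Thus it suffices to show that $\phi$ is injective on $\qKT^{(1)}$, equivalently that $\psi$ (which sends $\revSSYT\to\qKT^{(1)}$ via right row-filling) is a two-sided inverse of $\phi$. The heart of the argument is to prove $\psi(\phi(T))=T$ for every $T\in\qKT^{(1)}$: the rows of $T$ are exactly the runs produced by right row-filling on $\phi(T)$.

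To establish $\psi\circ\phi=\mathrm{id}$, I would induct on the rows, processing them in the order right row-filling selects them (reading runs that end in column 1). The crucial step is to verify that the run extracted by $\psi$ from $\phi(T)$ coincides with an actual row of $T$. Here I would invoke the defining quasi-key conditions: condition (1) forces row entries to weakly decrease and bounds them by the row index; conditions (3) and (4) control how an entry in column $c+1$ must sit relative to its left-neighbor and to entries above and below in the same column. These are precisely the conditions guaranteeing that the ``smallest weakly-larger entry'' greedy choice made by right row-filling reproduces the genuine horizontal adjacencies of $T$ --- i.e.\ that no two rows of $T$ could be ``recombined'' into a different valid run decomposition. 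Since each column of $\phi(T)$ is strictly decreasing, an entry appears at most once per column, so matching column sets forces the same multiset of entries in each column, and the greedy run-selection then forces the same partition of those entries into rows.

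The main obstacle will be the run-matching step: proving that right row-filling applied to $\phi(T)$ recovers exactly the rows of $T$, rather than some other valid quasi-key tableau with the same column sets. This is where all the quasi-key axioms must be used simultaneously, and it is conceivable the author instead argues more directly --- for instance, by showing that among all fillings with a fixed sequence of column sets, the quasi-key conditions (especially the strict increase up the first column in condition (2) together with condition (3)) pin down the first-column entry of each row, and hence the row index, which in turn determines the placement of every entry. I would be prepared to fall back on this direct uniqueness argument: fix the column sets, and show that conditions (2)--(4) leave no freedom in assigning each entry of column $c$ to a row, proceeding left to right across columns. Either route reduces the lemma to the statement that the quasi-key conditions admit a unique solution given the data of the column sets.
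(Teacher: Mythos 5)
Your strategy is the same as the paper's: reduce to the reverse SSYT $V=\phi(T)$ determined by the column sets, and show that the quasi-key axioms force the decomposition of $V$ into rows to be the greedy one produced by right row-filling, so that $\psi(\phi(T))=T$. However, the proposal stops exactly where the proof has to begin. The entire content of the lemma is the claim you label ``the main obstacle'': that no non-greedy run decomposition of $V$ can satisfy the quasi-key conditions. You assert that conditions (3) and (4) ``are precisely the conditions guaranteeing'' this, and your fallback option (``show that conditions (2)--(4) leave no freedom in assigning each entry of column $c$ to a row'') is just a restatement of the lemma, not an argument for it. As written, nothing in the proposal rules out two distinct elements of $\qKT^{(1)}$ arising from two different valid run decompositions of the same $V$.

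For comparison, the paper's proof of this step is a genuinely delicate case analysis. It takes $T=\psi(V)$ without loss of generality, locates the first column $c$ at which the construction of $T'$ chooses an entry $i$ larger than the minimal available entry $j$, and lets $R$ be the row of $T'$ under construction and $S$ the later row that eventually absorbs $j$. It then shows: $S$ is weakly shorter than $R$; in every column to the right of $c$ the entry of $S$ is strictly smaller than that of $R$ (else condition (3) fails); one may assume $R$ does not start in column $c$; condition (3) forces $R$ to lie above $S$ and condition (4) then forces $R$ and $S$ to have equal length; and finally a regress argument shows that every entry of $S$ in a column $d$ forces an entry of $R$ in column $d+1$, contradicting equal length. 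None of these steps is present or even sketched in your proposal, so the argument has a genuine gap at its central point.
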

\begin{proof}
Suppose two elements $T, T'\in \qKT^{(1)}$ had identical column sets. Then $\phi(T) = \phi(T')=V$, where $V$ is the (unique) reverse tableau with the same column sets as $T, T'$. By Lemma~\ref{lem:SSYTtoqKT}, the right row-filling algorithm on $V$ yields an element of $\qKT^{(1)}$, so we may assume without loss of generality that $\psi(V)=T$. To create the rows of $T'$, we can also decompose $V$ into decreasing runs by starting at the rightmost column set and moving leftward (as in the right row-filling algorithm), except we have freedom over which entry we choose from the next column to the left at each step (up to making sure we choose entries in such a way that the decomposition can be completed): we are not restricted to choosing the smallest available entry. 

Since $T\neq T'$, at some point when constructing $T'$ in this way, we deviate from $\psi$ by choosing an entry larger than the smallest possible entry. Suppose the first time a choice of entry $i$ for $T'$ that is larger than the entry $j$ chosen for $T$ by the right row-filling algorithm occurs in column $c$. Let $R$ be the row of $T'$ that is being created at this moment. Since $R$ avoids the entry $j$, a row, say $S$, created later than $R$ must use $j$. Since lengths of rows weakly decrease throughout this process, we have row $S$ is weakly shorter than row $R$, and in column $c$, the entry of $S$ is smaller than the entry of $R$. Moreover, we claim that in all columns to the right of $c$, the entry of $S$ is smaller than the entry of $R$ (assuming both exist). To see this, suppose that $S$ had a larger entry than $R$ in some column after $c$. Since row $R$ is weakly longer than row $S$, one can check this forces a violation of quasi-key tableau condition (3), regardless of whether row $R$ is above or below row $S$ in $T'$.

We may assume that when creating rows of $T'$, if we ever \emph{start} a row in column $c$, then we may without loss of generality choose to first create the row starting with the smallest available entry in column $c$. This is because if we reach a point where we're starting a row in column $c$, then every remaining entry of column $c$ also starts a row (in column $c$). Since, when creating $T'$, we have freedom on how we choose the remaining entries for any of these rows, it does not matter what order we declare these rows are created in.

Therefore, we may assume that at the first point we deviate from $\psi$, the row $R$ of $T'$ that we are constructing did not start in this column $c$. Consider the value $k$ chosen for $R$ in column $c+1$. We must have $k\le j$, since we could have chosen $j$ for $R$ in column $c$. As before, let row $S$ of $T'$ be the row that chooses entry $j$ in column $c$. This means row $R$ has entries $i$ and $k$ in columns $c$ and $c+1$ respectively, and row $S$ has entry $j$ in column $c$, where $i>j$ and $k\le j$. If row $R$ is below row $S$ in $T'$, this configuration violates quasi-key tableau condition (3). So assume row $R$ is above row $S$. Now if row $R$ is strictly longer than row $S$, the same configuration violates quasi-key tableau condition (4). Since we already established row $R$ is weakly longer than row $S$, this means rows $R$ and $S$ must have the same length.

In particular, since row $R$ has an entry $k$ in column $c+1$, row $S$ must also have an entry, say $\ell$, in column $c+1$. We have established that in all columns to the right of $c$, the entry of $S$ is smaller than the entry of $R$, hence $\ell<k$. But choosing the column $c$ entry for row $R$ was the first time we deviated from choosing the smallest entry available. So when choosing the column $c+1$ entry for row $R$ we did not choose $\ell$ (even though $\ell$ was not yet chosen at this point, since $\ell$ was chosen later by $S$). The only way this could have happened was for row $R$ to have chosen an element $p>\ell$ of column $c+2$ . Then since rows $R$ and $S$ have the same length, $S$ also has an entry $q<p$ in column $c+2$.  But by the same argument, since we did not choose $q$ for for row $R$, there must be an element $u>q$ in column $c+3$  chosen by $R$, etc. In particular, the existence of an entry of row $S$ in some column $d$ implies the existence of an entry of row $R$ in column $d+1$, contradicting that rows $R$ and $S$ have the same length.
\end{proof}

\begin{theorem}\label{thm:bijection1}
The map $\psi: \revSSYT \rightarrow \qKT^{(1)}$ is a bijection, with inverse map $\phi$.
\end{theorem}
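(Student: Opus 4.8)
The plan is to leverage the fact that both $\phi$ and $\psi$ preserve column sets, together with two uniqueness statements: that a reverse semistandard Young tableau is completely determined by its column sets, and that (by Lemma~\ref{lem:colsets}) an element of $\qKT^{(1)}$ is completely determined by its column sets. From the preceding lemmas we already know that $\phi$ maps $\qKT^{(1)}$ into $\revSSYT$ while preserving column sets, and that $\psi$ maps $\revSSYT$ into $\qKT^{(1)}$. The only preliminary observation still needed is that $\psi$ likewise preserves column sets. This is immediate from the description of right row-filling: each run selects exactly one entry from each column set it reaches, reading leftward, and the entry selected from the $c$th column set of $V$ is placed in column $c$ of $\psi(V)$. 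Since right row-filling is a well-defined decomposition using every entry of $V$ exactly once, the $c$th column set of $\psi(V)$ equals the $c$th column set of $V$.

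To prove $\phi\circ\psi=\mathrm{id}$, I would fix $V\in\revSSYT$ and observe that $\phi(\psi(V))$ is a reverse semistandard Young tableau with the same column sets as $V$. But a reverse semistandard Young tableau is uniquely determined by its column sets: the shape is forced by the column sizes, and within each column the entries (which are distinct, as columns strictly decrease) must be arranged in decreasing order from top to bottom. Hence $\phi(\psi(V))=V$. To prove $\psi\circ\phi=\mathrm{id}$, I would fix $T\in\qKT^{(1)}$; then $\psi(\phi(T))\in\qKT^{(1)}$ has the same column sets as $T$, so by Lemma~\ref{lem:colsets}, which asserts that no two elements of $\qKT^{(1)}$ share the same column sets, we conclude $\psi(\phi(T))=T$. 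Together these two identities exhibit $\psi$ and $\phi$ as mutually inverse bijections.

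The conceptual work has already been carried out in the earlier lemmas, so no genuine obstacle remains at this stage. If anything requires care, it is confirming the two uniqueness inputs: the uniqueness of a reverse semistandard Young tableau from its column sets is elementary, whereas the uniqueness of a $\qKT^{(1)}$ from its column sets is exactly the content of Lemma~\ref{lem:colsets}, whose proof was the genuinely nontrivial step. The theorem thus reduces to assembling these column-set-preservation and uniqueness facts.
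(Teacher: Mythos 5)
Your proposal is correct and follows essentially the same route as the paper: both arguments rest on the facts that $\phi$ and $\psi$ preserve column sets and that elements of $\revSSYT$ and (by Lemma~\ref{lem:colsets}) of $\qKT^{(1)}$ are uniquely determined by their column sets. You simply spell out the two composite identities and the column-set preservation of $\psi$ a bit more explicitly than the paper's terse two-sentence proof does.
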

\begin{proof}
Elements of both $\revSSYT$ and (by Lemma~\ref{lem:colsets}) $\qKT^{(1)}$ have distinct column sets. Since both $\phi$ and $\psi$ preserve column sets, these maps are mutually inverse bijections.
\end{proof}

\begin{remark}
Notice the duality in the construction of $\ASSF$ and $\qKT^{(1)}$ from $\revSSYT$. The $\ASSF$ are constructed by taking minimally decreasing runs from left to right, the $\qKT^{(1)}$ by taking minimally increasing runs from right to left.
\end{remark}

\subsection{Restrictions defined by fundamental particles and fundamental slide polynomials}

We now establish restrictions of this bijection to some distinguished subsets of $\qKT^{(1)}$ and $\ASSF$. The first case arises from the combinatorics of fundamental particles, and the second from that of fundamental slide polynomials.

Say that $V\in \revSSYT$ (respectively, $T\in \qKT^{(1)}$) is \emph{particle-highest} if for every entry $i$ appearing in $V$ (respectively, $T$), the leftmost $i$ is either in the first column of $V$ (respectively, $T$), or is weakly left of some entry $j$, where $j$ is the smallest entry larger than $i$ appearing in $V$ (respectively, $T$). Let $\HrevSSYT$ (respectively, $\HqKT^{(1)}$) be the set of particle-highest $\revSSYT$ (respectively, $\qKT^{(1)}$).

By an easy check similar to that of Lemma~\ref{lem:dst}, the destandardization map for $\ASSF$ introduced in the previous section is well defined on both $\revSSYT$ and $\qKT^{(1)}$, and for $V\in \revSSYT$ (respectively, $T\in \qKT^{(1)}$), we have $\destand(V)=V$ (respectively, $\destand(T)=T$) if and only if $V$ (respectively, $T$) is particle-highest. 

\begin{theorem}\label{thm:Hbijection}
The bijection $\qKT^{(1)}\leftrightarrow \revSSYT \leftrightarrow \ASSF$ restricts to a bijection $\HqKT^{(1)} \leftrightarrow \HrevSSYT \leftrightarrow \HSSF$.
\end{theorem}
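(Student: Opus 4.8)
The plan is to reduce the entire statement to one elementary observation: the particle-highest property of an object---whether it lives in $\ASSF$, $\revSSYT$, or $\qKT^{(1)}$---depends only on its sequence of column sets, and every map in the chain $\qKT^{(1)} \leftrightarrow \revSSYT \leftrightarrow \ASSF$ preserves column sets. Once both of these are in hand, the restriction to the particle-highest subsets is automatic.

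First I would record a column-set characterization of the particle-highest condition that is uniform across all three models. Let $X$ be any such object with column sets $C_1, C_2, \ldots$, where $C_c$ is the set of entries occurring in column $c$ (these are genuine sets, since no column repeats an entry in any of the three models). The multiset of entries of $X$---and hence, for each appearing entry $i$, the value $i^\uparrow$, the smallest appearing entry exceeding $i$---is determined by the $C_c$. Because entries weakly decrease along rows in all three models, the leftmost occurrence of $i$ lies in column $c_i := \min\{c : i \in C_c\}$. Thus the defining condition ``for each appearing $i$, either $i \in C_1$, or $i^\uparrow \in C_c$ for some $c \ge c_i$'' is phrased entirely in terms of the column sets. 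Consequently any two objects, of possibly different types, that share the same column sets are simultaneously particle-highest or not.

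Next I would invoke column-set preservation for each map in the chain: Mason's column-filling algorithm sends the $c$'th column set of $V \in \revSSYT$ to the $c$'th column set of the resulting element of $\ASSF$, while $\phi$ and $\psi$ preserve column sets by construction (Theorem~\ref{thm:bijection1}). Hence an object and all of its images under these bijections carry identical column sets and so, by the previous paragraph, are simultaneously particle-highest. Combined with the identification of $\HSSF = \{\destand(S) : S\in\ASSF\}$ with the particle-highest elements of $\ASSF$ (Lemma~\ref{lem:dst}), together with the analogous identifications for $\HrevSSYT$ and $\HqKT^{(1)}$ noted just above the statement, this shows the bijections carry $\HSSF$, $\HrevSSYT$, and $\HqKT^{(1)}$ onto one another, which is exactly the claimed restricted bijection.

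The only genuine content is the first step, and the point that must be checked rather than asserted is that the three separately stated definitions of particle-highest really do coincide as the same column-set condition---in particular that ``leftmost $i$'' and ``weakly left of some $i^\uparrow$'' translate into the identical column-index statements in each model despite the differing shape conventions (skyline diagrams versus partition diagrams). I expect this to be routine, precisely because rows weakly decrease in all three settings, so that the leftmost $i$ always sits in column $c_i$ and ``weakly left of an $i^\uparrow$'' always means ``$i^\uparrow$ occurs in some column of index $\ge c_i$''; but it is the verification on which the whole argument rests.
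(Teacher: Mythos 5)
Your proposal is correct and follows essentially the same route as the paper: the published proof likewise rests on the observation that the destandardization (equivalently, particle-highest) condition depends only on column sets, which all the bijections preserve. The paper phrases the conclusion slightly more strongly---that $\destand$ commutes with the bijections, using that $\destand$ changes \emph{all} $i$'s to $i+1$'s at once---but your weaker statement that particle-highestness is preserved already suffices for the theorem as stated.
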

\begin{proof}
The condition under which destandardization applies is a condition only on column sets, which are preserved under the bijections, and we are using the same destandardization map on $\qKT^{(1)}$, $\revSSYT$ and $\ASSF$, hence $\destand$ changes $i$'s to $i+1$'s in $T\in \qKT^{(1)}$ if and only if it changes $i$'s to $i+1$'s in the corresponding $V\in \revSSYT$ if and only if it changes $i$'s to $i+1$'s in the corresponding $S\in \ASSF$.  Moreover, $\destand$ changes \emph{all} $i$'s to $i+1$'s in each of $T$, $V$ and $S$, so the column sets of $\destand(T)$, $\destand(V)$ and $\destand(S)$ are identical, and hence these three objects are identified under the bijection. In particular, $\destand$ commutes with the bijection.
\end{proof}

We can restrict the bijection further. Say that a $\qKT^{(1)}$, $\revSSYT$ or $\ASSF$ $X$ is \emph{quasi-Yamanouchi} if for every $i$ appearing in $X$, the leftmost $i$ is either in the first column of $X$, or weakly left of some $i+1$ in $X$. Let $\QqKT^{(1)}$, $\QrevSSYT$, $\QSSF$ denote the subset of quasi-Yamanouchi elements of, respectively, $\qKT^{(1)}$, $\revSSYT$, $\ASSF$.

\begin{remark} 
This definition of quasi-Yamanouchi is actually the same as that in Definition~\ref{def:quasi-Yamanouchi}, restricted to $\qKT^{(1)}$. This is because an entry $i$ being in the first column of $T \in \qKT^{(1)}$ implies this entry $i$ is in row $i$. However, this definition of quasi-Yamanouchi for $\revSSYT$ is slightly different to the definition of quasi-Yamanouchi for $\revSSYT$ in \cite{AS1}, which puts no condition on the first column.
\end{remark}

Notice the quasi-Yamanouchi condition is stronger than the particle-highest condition, in particular, quasi-Yamanouchi objects are a subset of particle-highest objects.

We define another destandardization map $\destand_Q$ on $\qKT^{(1)}$, $\revSSYT$, $\ASSF$: $\destand_Q$ changes all $i$'s to $i+1$'s if the leftmost $i$ is not in the first column and has no $i+1$ weakly to its right. By replacing $j$ with $i+1$ in the argument of Lemma~\ref{lem:dst}, $\destand_Q$ is well-defined and $\destand_Q(X) = X$ if and only if $X$ is quasi-Yamanouchi (for $X$ a $\qKT^{(1)}$, $\revSSYT$, $\ASSF$).

\begin{theorem}\label{thm:Qbijection}
The bijection $\qKT^{(1)}\leftrightarrow \revSSYT \leftrightarrow \ASSF$ restricts to a bijection $\QqKT^{(1)} \leftrightarrow \QrevSSYT \leftrightarrow \QSSF$.
\end{theorem}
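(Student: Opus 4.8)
The plan is to mirror the proof of Theorem~\ref{thm:Hbijection}, replacing the destandardization map $\destand$ with $\destand_Q$ and the particle-highest condition with the quasi-Yamanouchi condition. The crux of the earlier argument was that $\destand$ commutes with the column-set-preserving bijection $\qKT^{(1)} \leftrightarrow \revSSYT \leftrightarrow \ASSF$, for the reason that the rule deciding whether $\destand$ acts on a given entry depends only on column sets, together with the facts that column sets are preserved by the bijection (Theorem~\ref{thm:bijection1} and Mason's column-filling algorithm) and that $\destand$ changes \emph{all} copies of an entry at once; the fixed points of $\destand$ are exactly the particle-highest objects, so the bijection restricts to them. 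Since $\QqKT^{(1)}$, $\QrevSSYT$, $\QSSF$ are by definition the fixed points of $\destand_Q$ (via the variant of Lemma~\ref{lem:dst} noted after the definition of $\destand_Q$), it suffices to reverify the two properties above for $\destand_Q$.

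The one step requiring genuine checking is that whether $\destand_Q$ acts on an entry $i$ is determined by column sets alone. By definition $\destand_Q$ changes all $i$'s to $i+1$'s precisely when the leftmost $i$ is not in the first column and has no $i+1$ weakly to its right. In each of the three tableau families, columns contain distinct entries, so for a fixed value $i$ the smallest-indexed column containing $i$ is a single well-defined box---the leftmost $i$---whose column index is read directly off the column sets. The condition ``leftmost $i$ not in the first column'' then says $i$ is absent from the first column set, and the condition ``no $i+1$ weakly to the right of the leftmost $i$'' says $i+1$ belongs to no column set of index at least that of the leftmost $i$. Both are statements purely about column sets, which the bijection preserves.

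Granting this, the argument of Theorem~\ref{thm:Hbijection} applies verbatim: corresponding objects $T \in \qKT^{(1)}$, $V \in \revSSYT$, $S \in \ASSF$ have identical column sets, so $\destand_Q$ acts on $i$ in one if and only if it acts on $i$ in all three, and since $\destand_Q$ sends every $i$ to $i+1$, the images $\destand_Q(T)$, $\destand_Q(V)$, $\destand_Q(S)$ again share a common collection of column sets and are identified under the bijection; hence $\destand_Q$ commutes with the bijection and carries fixed points to fixed points, giving the restriction $\QqKT^{(1)} \leftrightarrow \QrevSSYT \leftrightarrow \QSSF$. The main obstacle I expect is confirming the column-set character of the phrase ``weakly to the right,'' which a priori refers to spatial position rather than column-set data; the point is that the leftmost $i$ occupies a column whose index is itself column-set-determined, so ``weakly to the right'' becomes the set-theoretic statement about columns of index $\geq$ that of the leftmost $i$. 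This is also exactly where one must confirm that passing from $\destand$ (threshold value $i^\uparrow$) to $\destand_Q$ (threshold value the fixed $i+1$) does not spoil anything---and it does not, since membership of the fixed value $i+1$ in a given column set is manifestly column-set data.
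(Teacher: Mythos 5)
Your proposal is correct and follows essentially the same route as the paper: the paper's proof of this theorem is literally ``essentially identical to the proof of Theorem~\ref{thm:Hbijection},'' and that earlier proof is precisely the commutation-with-the-bijection argument you give, resting on the fact that the condition triggering destandardization depends only on column sets (which the bijection preserves) and that all copies of an entry are changed at once. Your extra care in unpacking why ``leftmost $i$ not in the first column'' and ``no $i+1$ weakly to its right'' are column-set conditions is a detail the paper leaves implicit, but it is the same argument.
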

\begin{proof}
Essentially identical to the proof of Theorem~\ref{thm:Hbijection}.
\end{proof}

\subsection{Restriction to fixed skyline diagrams}

Define a map $\Psi: \qKT^{(1)}\rightarrow \ASSF$ by letting $\Psi(T)$ be the $\ASSF$ obtained by performing left row-filling on $\phi(T)$. For example, $\Psi$ maps the column quasi-key tableau on the right hand side of Figure~\ref{fig:leftright} to the semi-skyline filling on the left hand side of Figure~\ref{fig:leftright}. Since $\phi$ and left row-filling are bijections, $\Psi$ is also a bijection. Moreover, since $\phi$ preserves column sets and left row-filling acts on sets of entries in a column without regard to their row index, $\Psi$ may be interpreted simply as performing left row-filling on the column sets of a column quasi-key tableau, without reference to $\phi$ or reverse tableaux. 

As in the proof of Lemma~\ref{lem:lswap}, for $a$ a weak composition we define $\Shuffle(a)$ to be the set of all weak compositions whose entries are a rearrangement of the entries of $a$. The following is immediate from the definition of left row-filling:

\begin{lemma}\label{lem:shuffle}
Let $a$ be a weak composition. If $T\in \qKT^{(1)}(a)$, then $\Psi(T) \in \bigcup_{b\in \Shuffle(a)}\ASSF(b)$.
\end{lemma}

Even though it is suggestive from Figure~\ref{fig:leftright}, it is not clear from earlier results of this section that $\Psi$ in fact preserves the shape of a skyline diagram: if $T\in \qKT^{(1)}(a)$ then $\Psi(T) \in \ASSF(a)$. The remainder of this section is devoted to proving this. 

A family of fillings of skyline diagrams called \emph{(semistandard) key tableaux} are defined in \cite{Assaf-nonsymmetric}. Key tableaux are a reinterpretation of the \emph{Kohnert tableaux} of \cite{AS2}, in the exact same way that quasi-key tableaux reinterpret the quasi-Kohnert tableaux of \cite{AS2}; see Remark~\ref{rmk:quasi-key}. We will not require the definition of key tableaux here, only the fact that the quasi-key tableaux $\qKT(a)$ (and thus the column quasi-key tableaux $\qKT^{(1)}(a)$) are a subset of the set $\KeyTab(a)$ of key tableaux for $a$; this follows straightforwardly from the fact \cite{AS2} that quasi-Kohnert tableaux are a subset of Kohnert tableaux. The following is proved in \cite{MPS18}:

\begin{theorem}\label{thm:MPS}\cite{MPS18}
Let $a$ be a weak composition. Then left row-filling applied to the column sets of a key tableau defines a bijection
\[\KeyTab(a) \rightarrow \bigcup_{b\in \lswap(a)}\ASSF(b).\]
\end{theorem}

In particular, Theorem~\ref{thm:MPS} tightens Lemma~\ref{lem:shuffle}: since $\qKT^{(1)}(a)\subset \KeyTab(a)$ it follows that if $T\in \qKT^{(1)}(a)$ then $\Psi(T) \in \bigcup_{b\in \lswap(a)}\ASSF(b)$. This result allows us to prove that $\Psi$ preserves skyline diagrams. 

\begin{theorem}\label{thm:restricted}
Let $a$ be a weak composition. The bijection $\qKT^{(1)}\leftrightarrow \ASSF$ defined by $\Psi$ restricts to a bijection $\qKT^{(1)}(a) \leftrightarrow \ASSF(a)$.
\end{theorem}
\begin{proof}
Since $\Psi$ is a bijection, and since $\qKT^{(1)}(a)$ and $\ASSF(a)$ are equinumerous and finite for any $a$ (by Theorem~\ref{thm:colquasikeyisatom} they both generate the Demazure atom $\atom_a$), it is enough to prove that if $T\in \qKT^{(1)}(a)$ then $\Psi(T)\in \ASSF(a)$. 
Since $\qKT^{(1)}(a)\subset \KeyTab(a)$, if $T\in \qKT^{(1)}(a)$ then by Theorem~\ref{thm:MPS} we have $\Psi(T)\in \ASSF(b)$ for some $b\in \lswap(a)$. 

Define a poset structure on $\Shuffle(a)$ by setting $b\le b'$ if $b\in \lswap(b')$. It follows from the definition of $\lswap$ (Definition~\ref{def:lswap}) that this relation defines a poset, whose maximal element is the weak composition $c$ consisting of the entries of $a$ in increasing order and whose minimal element is the weak composition $d$ consisting of the entries of $a$ in decreasing order.

We now induct on this poset. Since $d$ is minimal in this poset, i.e., $\lswap(d) = \{d\}$, it follows from Theorem~\ref{thm:MPS} that if $T\in \qKT^{(1)}(d)$ then $\Psi(T)\in \ASSF(d)$. Therefore $\Psi$ is a bijection from $\qKT^{(1)}(d)$ to $\ASSF(d)$. Now consider $a$ and suppose inductively that for every $b<a$, if $T\in \qKT^{(1)}(b)$ then $\Psi(T) \in \ASSF(b)$. Suppose $T\in \qKT^{(1)}(a)$. By Theorem~\ref{thm:MPS}, we have $\Psi(T)\in \bigcup_{b\le a}\ASSF(b)$. But by the inductive hypothesis, if $S\in \ASSF(b)$ for some $b<a$, then $\Psi^{-1}(S)\in \qKT^{(1)}(b)$. Therefore we must have $\Psi(T)\in \ASSF(a)$, as required. 
\end{proof}

Combining Theorem~\ref{thm:restricted} and Theorems~\ref{thm:Hbijection} and \ref{thm:Qbijection}, we obtain

\begin{corollary}
The bijection $\HqKT^{(1)} \leftrightarrow \HSSF$ from Theorem~\ref{thm:Hbijection} restricts to a bijection $\HqKT^{(1)}(a) \leftrightarrow \HSSF(a)$, and the bijection $\QqKT^{(1)} \leftrightarrow \QSSF$ from Theorem~\ref{thm:Qbijection} restricts to a bijection $\QqKT^{(1)}(a) \leftrightarrow \QSSF(a)$.
\end{corollary}

\section*{Acknowledgements}
The author is grateful to J. Haglund, S. Mason and S. van Willigenburg for illuminating discussion, and to an anonymous referee whose very helpful suggestions and comments improved the exposition of this paper.

%
%

\bibliographystyle{amsalpha} 
\bibliography{Bases.bib}

\end{document}